\newcommand{\ts}{\textsuperscript}
\newcommand{\Z}{\mathbb{Z}}
\newcommand{\N}{\mathbb{N}}
\newcommand{\G}{H_n\rtimes S_n}
\renewcommand{\Vec}[1]{\underline{\mathbf{#1}}}
\def\quotient#1#2{%
   \lower0.25ex\hbox{$#2$}\big\backslash\raise0.25ex\hbox{$#1$}%
}
\renewcommand{\le}{\leqslant}
\renewcommand{\ge}{\geqslant}
\theoremstyle{plain}
\newtheorem{thm}{Theorem}[section]
\newtheorem{lem}[thm]{Lemma}
\newtheorem{prop}[thm]{Proposition}
\newtheorem{nota}[thm]{Notation}
\newtheorem{rem}[thm]{Remark}
\newtheorem*{remR}{Remark}
\newtheorem{thmR}{Theorem}
\newtheorem*{rep@theorem}{\rep@title}
\newcommand{\newreptheorem}[2]{%
\newenvironment{rep#1}[1]{%
 \def\rep@title{#2 \ref{##1}}%
 \begin{rep@theorem}}%
 {\end{rep@theorem}}}
\newtheorem*{thm*}{Theorem}
\newtheorem*{lem*}{Lemma}
\newtheorem*{prop*}{Proposition}
\newtheorem*{cor*}{Corollary}
\newtheorem*{qu*}{Question}
\newtheorem*{dt*}{Definition and Theorem}
\newtheorem*{not*}{Notation}
\newtheorem*{defn*}{Definition}
\newtheorem*{exmp*}{Example}
\newtheorem*{exmps*}{Examples}
\newtheorem*{dprop*}{Definition and Proposition}
\newtheorem*{fact*}{Fact}
\theoremstyle{definition}
\newtheorem{defn}[thm]{Definition}
\DeclareMathOperator\Supp{supp}
\DeclareMathOperator\FSym{FSym}
\DeclareMathOperator\FAlt{FAlt}
\DeclareMathOperator\Sym{Sym}
\DeclareMathOperator\Aut{Aut}
\DeclareMathOperator\Inn{Inn}
\DeclareMathOperator\FP{FP}
\DeclareMathOperator\id{id}
\begin{document}
\title{Twisted Conjugacy in Houghton's groups}
\author{Charles Garnet Cox}
\address{Mathematical Sciences, University of Southampton, SO17 1BJ, UK}
\email{cpgcox@gmail.com}
\thanks{}

\subjclass[2010]{Primary: 20F10, 20B99}

\keywords{uniform twisted conjugacy problem, Houghton group, permutation group, orbit decidability, conjugacy problem for commensurable groups, computable centralisers.}

\date{July $4$, 2017}

\dedicatory{}
\begin{abstract} For a fixed $n\in \{2, 3, \ldots\}$, the Houghton group $H_n$ consists of bijections of $X_n=\{1,\ldots,n\} \times \N$ that are `eventually translations' of each copy of $\N$. The Houghton groups have been shown to have solvable conjugacy problem. In general, solvable conjugacy problem does not imply that all finite extensions and finite index subgroups have solvable conjugacy problem. Our main theorem is that a stronger result holds: for any $n\in \{2, 3, \ldots\}$ and any group $G$ commensurable to $H_n$, $G$ has solvable conjugacy problem.
\end{abstract}
\maketitle
\section{Introduction}
Given a presentation $\langle S\mid R\rangle=G$, a `word' in $G$ is an ordered $f$-tuple $a_1^{}\ldots a_f$ with $f \in \N$ and each $a_i \in S\cup S^{-1}$. Dehn's problems and their generalisations (known as decision problems) ask seemingly straightforward questions about finite presentations. The problems that we shall consider include:
\begin{itemize}
\item WP($G$), the \emph{word problem} for $G$: show there exists an algorithm which given any two words $a,b \in G$,  decides whether $a=_{G}b$ or $a\ne_{G}b$ i.e.\ whether these words represent the same element of the group. This is equivalent to asking whether or not $ab^{-1}=_{G}1$. There exist finitely presented groups where this problem is undecidable (see \cite{Novikov} or \cite{Boone}).
\item CP($G$), the \emph{conjugacy problem} for $G$: show there exists an algorithm which given any two words $a,b \in G$, decides whether or not there exists an $x \in G$ such that $x^{-1}ax=b$. Since $\{1_G\}$ is always a conjugacy class of $G$, if CP($G$) is solvable then so is WP($G$). There exist groups where WP($G$) is solvable but CP($G$) is not (e.g.\ see \cite{classification}) and so CP($G$) is strictly weaker than WP($G$).
\item TCP$_{\phi}$($G$), the $\phi$-\emph{twisted conjugacy problem} for $G$: for a fixed $\phi \in \Aut(G)$, show there exists an algorithm which given any two words $a,b \in G$, decides whether or not there exists an $x \in G$ such that $(x^{-1})\phi ax=b$ (meaning that $a$ is $\phi$-twisted conjugate to $b$). Note that TCP$_{Id}$($G$) is CP($G$). 
\item TCP($G$), the \emph{(uniform) twisted conjugacy problem} for $G$: show there exists an algorithm which given any $\phi \in \Aut(G)$ and any two words $a, b \in G$, decides whether or not they are $\phi$-twisted conjugate. There exist groups $G$ such that CP($G$) is solvable but TCP($G$) is not (e.g.\ see \cite{orbitdecide}).
\end{itemize}

Should any of these problems be solved for one finite presentation, then they may be solved for any other finite presentation of that group. We therefore say that such problems are solvable if an algorithm exists for one such presentation. Many decision problems may also be considered for any group that is recursively presented. 

We say that $G$ is a finite extension of $H$ if $H\unlhd G$ and $H$ is finite index in $G$. If CP($G$) is solvable, then we do not have that finite index subgroups of $G$ or finite extensions of $G$ have solvable conjugacy problem, even if these are of degree 2. Explicit examples can be found for both cases (see \cite{conjsub} or \cite{Gorjaga1975}). Thus it is natural to ask, if CP($G$) is solvable, whether the conjugacy problem holds for finite extensions and finite index subgroups of $G$. The groups we investigate in this paper are Houghton groups (denoted $H_n$ with $n \in \N$).
\begin{thmR} Let $n\in \{2, 3, \ldots\}$. Then TCP($H_n$) is solvable.\label{main2}
\end{thmR}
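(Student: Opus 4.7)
The plan is to pin down $\Aut(H_n)$ concretely inside the overgroup $\G = H_n \rtimes S_n$, then reduce the uniform twisted conjugacy problem to the case of a pure ray-permutation twist and solve that case explicitly. Because $\FSym(X_n)$ is the torsion subgroup of $H_n$, it is characteristic, so every $\phi \in \Aut(H_n)$ restricts to an automorphism of $\FSym(X_n)$. The Schreier--Ulam--Scott theorem identifies $\Aut(\FSym(X_n))$ with $\Sym(X_n)$, and since the centraliser of $\FSym(X_n)$ in $\Sym(X_n)$ is trivial, each $\phi$ is realised as conjugation by a unique $u_\phi \in \Sym(X_n)$. The main algebraic verification of this first step is that any permutation normalising $H_n$ must itself be eventually of ``translate-and-permute-rays'' type, so $u_\phi \in \G$. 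Writing $u_\phi = g\sigma$ with $g \in H_n$ and $\sigma \in S_n$, the equation $\phi(x^{-1})ax = b$ rearranges, after left-multiplying by $g^{-1}$, into $\sigma(x^{-1})\,(g^{-1}a)\,x = g^{-1}b$, where $\sigma$ acts on $H_n$ by permuting rays. So it suffices to solve the twisted conjugacy problem for the ``pure ray-permutation'' automorphisms $c_\sigma$ uniformly in $\sigma \in S_n$ and the two inputs.

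For fixed $\sigma \in S_n$, the translation-vector homomorphism $H_n \to \Z^n$ sends the twisted conjugacy equation to the linear system $(I - \sigma)\Vec{t}(x) = \Vec{t}(b) - \Vec{t}(a)$, which is soluble iff $\Vec{t}(b) - \Vec{t}(a)$ has zero sum on each $\sigma$-orbit of $\{1,\ldots,n\}$. Provided it is soluble, I fix one particular $x_0$ with the required translation vector and replace $a$ by $\sigma(x_0^{-1}) a x_0$, reducing to the case $\Vec{t}(a) = \Vec{t}(b)$. The remaining conjugators $x$ are then exactly those whose translation vector lies in the $\sigma$-fixed sublattice $L \le \Z^n$. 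Writing such an $x$ as $x = y \cdot f$ with $y$ a chosen lift of an element of $L$ and $f \in \FSym(X_n)$, the equation rearranges to an explicit identity of the shape $\sigma(f) = c(y) \cdot bfb^{-1}$ inside $\FSym(X_n)$, where $c(y) \in \FSym(X_n)$ is computable from $a,b,\sigma,y$. For any fixed $y$, this is a finitary equation, whose solubility in $f \in \FSym(X_n)$ is decidable by direct support analysis.

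The main obstacle is the infinite lattice $L$ of possible $y$'s: a priori one would have to search all of it. The decisive point is that as the translation vector of $y$ moves far out in $L$, the data $c(y)$ becomes ``periodic'' in a computable sense---its support is simply shifted, while its combinatorial shape stabilises---so the search for a solution can be cut down to an effectively bounded region of $L$ depending only on the supports of $a, b$ and the cycle structure of $\sigma$. Establishing this boundedness uniformly in $\sigma$ and in the inputs is where most of the technical work lies; once in place, combining the description of $\Aut(H_n)$, the lattice-level reduction, and the finite search in $\FSym(X_n)$ yields a uniform decision procedure in $(\phi,a,b)$, proving the theorem.
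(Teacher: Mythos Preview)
Your proposal follows essentially the same architecture as the paper: identify $\Aut(H_n)$ with $\G$, convert the twisted problem into an ordinary conjugacy-in-$H_n$ problem for two elements of $\G$ sharing the same ray-permutation, pass to the translation lattice, and reduce to a finite search in $\FSym$. The paper's Section~\ref{Sectionequivproblem} and Section~\ref{sectionreduce} carry out exactly this reduction, so the overall plan is sound and matches the paper closely.

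The gap is in your final step. You assert that the $\FSym$-level problem is ``periodic'' in $y\in L$, and that this periodicity bounds the search. The periodicity you have in mind comes from centraliser elements of the form $a_{[i]}^{|\sigma|}\in C_{H_n}(\sigma a)$, and for rays $i\in I$ (those supporting infinite orbits) this is precisely how the paper proceeds (Lemma~\ref{lemmacentraliser}, Lemma~\ref{lemmaonetixis0}, Proposition~\ref{propmainargument}). But these centraliser shifts do \emph{not} generate a finite-index sublattice of $L$: in the $I^c$-directions (rays on which $\sigma a$ has only finite orbits) there is no such periodicity at all. For a concrete obstruction, take $n=3$, $\sigma=\mathrm{id}$, $a=g_2$: then $L$ has rank~$2$ but $t(C_{H_3}(a))$ has rank~$1$, so periodicity alone leaves an unbounded one-parameter family of $y$'s to test. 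What actually happens on $I^c$ is not periodicity but \emph{rigidity}: for each $r$, the sum $\sum_{j\in I^c_r}t_j(x)$ is uniquely determined by counting $r$-cycles of $a$ versus $b$ (this is the content of Lemma~\ref{lem-sametransonIc} and the computation of $M_r(g,h)$). The paper's algorithm therefore splits into two genuinely different mechanisms---a centraliser/periodicity bound on $I$ and an exact cycle-count on $I^c$---and your proposal collapses these into a single ``periodicity'' claim that does not hold. Once you separate the two cases your plan goes through, but as written the bounding step would fail.

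A secondary point: your claim that the fixed-$y$ equation in $\FSym$ is decidable ``by direct support analysis'' is correct but understates the work. The paper's Section~\ref{sectionFSymalg} (culminating in Proposition~\ref{propositionfsym}) needs a separate argument for the infinite cycles of $a$ and $b$, since a conjugator in $\FSym$ can move points arbitrarily far along an infinite orbit; bounding its support requires the argument of Lemma~\ref{lemraysjinI}.
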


We say that two groups $A$ and $B$ are commensurable if there exists $N_A\cong N_B$ where $N_A$ is normal and finite index in $A$ and $N_B$ is normal and finite index in $B$. 

\begin{thmR} Let $n\in \{2, 3, \ldots\}$. Then, for any group $G$ commensurable to $H_n$, CP($G$) is solvable.\label{main5}
\end{thmR}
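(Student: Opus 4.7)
The plan is to leverage Theorem \ref{main2} via a Bogopolski--Martino--Ventura style extension principle applied along a common normal subgroup of $G$ and $H_n$. Let $N$ be a group that is normal and of finite index in both $G$ and $H_n$, as supplied by commensurability. The argument proceeds in two parts: first, show that the $\phi$-twisted conjugacy problem in $N$ is solvable for a suitable finite family of automorphisms $\phi \in \Aut(N)$; second, assemble these solutions into a decision procedure for CP($G$) using the short exact sequence $1 \to N \to G \to G/N \to 1$ with $G/N$ finite.

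For the second part, the extension principle in the special case of a finite quotient is elementary: two elements $a,b \in G$ are conjugate in $G$ if and only if their images in $G/N$ are conjugate (a decidable question in a finite group), and, for some coset representative $g$ realising such a conjugation, a certain adjusted pair of elements of $N$ is $\phi_g$-twisted conjugate in $N$, where $\phi_g \in \Aut(N)$ is conjugation by $g$. The orbit-decidability hypothesis of the extension theorem is automatic because $G/N$ is finite, so CP($G$) reduces to finitely many twisted conjugacy queries in $N$, indexed by coset representatives.

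The main obstacle is the first part: solving TCP$_\phi$($N$) for the relevant $\phi$ given only TCP($H_n$). The $\phi$ in question come from the conjugation action of $G$ on $N$, and in general need not extend to automorphisms of $H_n$. The plan is to build a controlled overgroup $\widetilde{H}$ in which $H_n$ sits as a normal finite-index subgroup and in which each required $\phi$ is realised as conjugation by a suitable element; a first application of the extension principle promotes TCP($H_n$) to TCP($\widetilde{H}$), and each TCP$_\phi$($N$) is then recovered as a coset-restricted conjugator search in $\widetilde{H}$. Constructing $\widetilde{H}$ and verifying its algorithmic properties (recursive presentation, solvable word problem, computability of the coset structure of $N$ inside it) is where the bulk of the technical work lies. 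Once $\widetilde{H}$ is in place, the proof is a clean two-stage application of the extension principle bracketing the construction, effectively reducing CP($G$) to finitely many TCP queries in $H_n$, each decidable by Theorem \ref{main2}.
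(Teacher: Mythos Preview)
Your proposal correctly identifies the central difficulty --- the automorphisms of $N$ coming from conjugation by $G$ need not extend to automorphisms of $H_n$ --- but the proposed resolution has a genuine gap: the overgroup $\widetilde{H}$ you describe does not exist in general. Any group $\widetilde{H}$ with $H_n\unlhd\widetilde{H}$ must sit inside $N_{\Sym(X_n)}(H_n)=H_n\rtimes S_n=\Aut(H_n)$. But $\Aut(N)$, for $N$ a finite-index subgroup of $H_n$, can be strictly larger than $\Aut(H_n)$: concretely, $\Aut(N)\cong N_{\Sym(X_n)}(N)$, and the paper's Proposition~\ref{prop-structureofUp} shows that this normalizer contains elements (e.g.\ permutations of the residue classes within a single ray) that do not lie in $H_n\rtimes S_n$. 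A commensurable group $G$ may well act on $N$ through such automorphisms, and then no $\widetilde{H}$ of the shape you want can realise them by conjugation. There is also a secondary issue: the extension principle you invoke yields CP, not TCP, of the extension, so ``promoting TCP($H_n$) to TCP($\widetilde{H}$)'' would require separate control of $\Aut(\widetilde{H})$, which you do not address.

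The paper resolves the same obstacle by going \emph{down} rather than up. Instead of enlarging $H_n$, it passes from the arbitrary common normal subgroup $N$ to a specific characteristic subgroup $U_p\leqslant N$ (Lemma~\ref{lemfactsaboutU}). Proposition~\ref{prop-structureofUp} shows $U_p$ is characteristic in $N$, hence $U_p\unlhd G$ as well. The point is that $\Aut(U_p)=N_{\Sym(X_n)}(U_p)$ can be computed \emph{explicitly}: under a bijection $X_n\to X_{np}$ it embeds in $H_{np}\rtimes S_{np}$, so TCP($U_p$) reduces to a conjugator search in $H_{np}\rtimes S_{np}$ with the conjugator constrained to lie in $(U_p)\hat\phi_{U_p}\leqslant H_{np}$ (Propositions~\ref{prop-automorphismgrouplattice} and~\ref{prop-conjinUp}), which is handled by the machinery of Theorem~\ref{main2}. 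A single application of Theorem~\ref{thmorbitdecidability} to $1\to U_p\to G\to G/U_p\to 1$ then gives CP($G$). The moral: the structural input you need is not an abstract overgroup but the concrete description of $\Aut(U_p)$ as a permutation group, which is specific to Houghton's groups.
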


As a consequence of this work we also obtain.
\begin{thmR}\label{main6} There exists an algorithm which takes as input any $n\in \{2, 3, \ldots\}$ and any $a\in \G$, and which decides whether or not $C_{H_n}(a)$ is finitely generated. If $C_{H_n}(a)$ is finitely generated, then the algorithm also outputs a finite generating set for $C_{H_n}(a)$.
\end{thmR}
We structure the paper as follows. In Section 2 we introduce the Houghton groups, make some simple observations for them, and reduce TCP($H_n$) to a problem similar to CP($\G$), the difference being that, given $a, b \in \G$, we are searching for a conjugator $x \in H_n$. This occurs since, for all $n\in \{2, 3, \ldots\}$, $\Aut(H_n)\cong\G$. In Section 3 we describe the orbits of elements of $\G$ and produce identities that a conjugator of elements in $\G$ must satisfy. These are then used in Section 4 to reduce our problem of finding a conjugator in $H_n$ to finding a conjugator in the subgroup of $H_n$ consisting of all finite permutations (which we denote by $\FSym$, see Notation \ref{notnFSym} below). Constructing such an algorithm provides us with Theorem \ref{main2}. In Section 5 we use Theorem \ref{main2} and \cite[Thm. 3.1]{orbitdecide} to prove our main result, Theorem \ref{main5}. In Section 6 we discuss the structure of $C_{H_n}(a)$ where $a\in \G$ and prove Theorem \ref{main6}.

\vspace{2mm}
\noindent\textbf{Acknowledgements.} I thank the authors of \cite{ConjHou} whose work is drawn upon extensively. I especially thank the author Armando Martino, my supervisor, for his encouragement and the many helpful discussions which have made this work possible. I thank Peter Kropholler for his suggested extension which developed into Theorem \ref{main5}. Finally, I thank the referee for their time and many helpful comments.

\section{Background}\label{sectionbackground}
As with the authors of \cite{ConjHou}, the author does not know of a class that contains the Houghton groups and for which the conjugacy problem has been solved.
\subsection{Houghton's groups}\label{sectionhn} Throughout we shall consider $\N:=\{1, 2, 3, \ldots\}$. For convenience, let $\Z_n:=\{1,\ldots,n\}$. For a fixed $n \in \N$, let $X_n:=\Z_n \times \N$. Arrange these $n$ copies of $\N$ as in Figure \ref{picturehoupic} below (so that the $k\ts{th}$ point from each copy of $\N$ form the vertices of a regular $n$-gon). For any $i \in \Z_n$, we will refer to the set $i \times \N$ as a \emph{branch} or \emph{ray} and will let $(i,m)$ denote the $m\ts{th}$ point on the $i\ts{th}$ branch.

\begin{nota}\label{notnFSym} For a non-empty set $X$, the set of all permutations of $X$ form a group which we denote $\Sym(X)$. Those permutations which have finite support (i.e.\ move finitely many points) form a normal subgroup which we will denote $\FSym(X)$. If there is no ambiguity for $X$, then we will write just $\Sym$ or $\FSym$ respectively.
\end{nota}

Note that, if $X$ is countably infinite, then $\FSym(X)$ is countably infinite but is not finitely generated, and $\Sym(X)$ is uncountable and so uncountably generated.
\begin{defn}\label{definitionhn} Let $n \in \N$. The $n\ts{th}$ \emph{Houghton group}, denoted $H_n$, is a subgroup of $\Sym(X_n)$. An element $g \in \Sym(X_n)$ is in $H_n$ if and only if there exist constants $z_1(g),\ldots,z_n(g)\in\N$ and $(t_1(g),\ldots,t_n(g))\in\Z^n$ such that, for all $i \in \Z_n$,
\begin{align}
 &(i,m)g = (i, m + t_i(g))\; \textrm{for all} \;m\ge z_i(g).\label{houcond}
\end{align}
\end{defn}
For simplicity, the numbers $z_1(g),\ldots, z_n(g)$ are assumed to be minimal i.e.\ for any $m'<z_i(g)$, $(i,m')g\ne (i, m'+t_i(g))$. The vector $t(g):=(t_1(g),\ldots,t_n(g))$ represents the `eventual translation length' for each $g$ in $H_n$ since $t_i(g)$ specifies how far $g$ moves the points $\{(i,m)\mid m\ge z_i(g)\}$. We shall say that these points are those which are `far out', since they are the points where $g$ acts in the regular way described in (\ref{houcond}). As $g$ induces a bijection from $X_n$ to $X_n$, we have that
\begin{align}
 &\sum\limits_{i=1}^{n}t_i(g)=0.\label{houcond2}
\end{align}
Given $g \in H_n$, the numbers $z_i(g)$ may be arbitrarily large. Thus $\FSym(X_n)\le H_n$.  Also, for any $n\in \{2, 3, \ldots\}$, we have (as proved in \cite{ses}) the short exact sequence 
 \begin{align*}
  1\longrightarrow \FSym(X_n)\stackrel{}{\longrightarrow} H_n\stackrel{}{\longrightarrow} \Z^{n-1}\longrightarrow 1
 \end{align*}
where the homomorphism $H_n \rightarrow \Z^{n-1}$ is given by $g\mapsto (t_1(g),\ldots,t_{n-1}(g))$.

These groups were introduced in \cite{Houghton} for $n\in \{3, 4, \ldots\}$. The generating set that we will use when $n\in \{3, 4, \ldots\}$ is $\{g_i\mid i=2,\ldots, n\}$ where, for each $i\in \{2,\ldots, n\}$ and $(j,m)\in X_n$,
\begin{equation*}
 (j,m)g_i = \left\{\begin{array}{ll}(1, m+1) & \mathrm{if}\ j=1\\ (1,1) & \mathrm{if}\ j=i, m=1\\ (i, m-1) & \mathrm{if}\ j=i, m>1\\ (j,m) & \mathrm{otherwise.}\end{array}\right.
\end{equation*}
Note, for each $i$, $t_1(g_i)=1$ and $t_j(g_i)=-\delta_{i,j}$ for $j \in \{2,\ldots,n\}$. Figure \ref{picturehoupic} shows a geometric visualisation of $g_2, g_4 \in H_5$. Throughout we shall take the vertical ray as the `first ray' (the set of points $\{(1, m)\mid m \in \N\}$) and order the other rays clockwise.

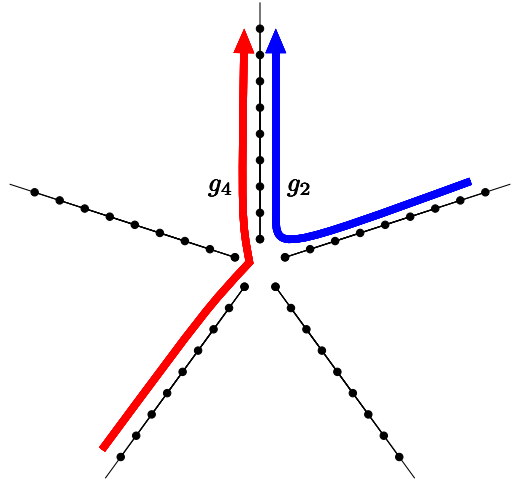
\begin{figure}[ht] 
\begin{tikzpicture}[scale=0.7]
\foreach \x in {18,90,...,306}
{
\draw (\x:5cm)  -- (\x:0.5cm);
\filldraw (\x:4.5cm) circle (2pt)-- (\x:0.5cm) circle (2pt);
\filldraw (\x:4cm) circle (2pt)-- (\x:1cm) circle (2pt);
\filldraw (\x:3.5cm) circle (2pt)-- (\x:1.5cm) circle (2pt);
\filldraw (\x:3cm) circle (2pt)-- (\x:2.5cm) circle (2pt);
\filldraw (\x:2cm) circle (2pt)-- (\x:0.5cm) circle (2pt);

\draw  [
    -triangle 45,
    line width=0.4mm,
    color=red,
    postaction={draw=red, line width=0.1cm, shorten >=0.3cm, -}
]  (-3, -3.5) .. controls (-1,-0.8)  .. (-0.2,0.06) .. controls +(-0.15,0.8) .. (-0.3,4.5);
\draw [
    -triangle 45,
    line width=0.4mm,
    color=blue,
    postaction={draw=blue, line width=0.1cm, shorten >=0.3cm, -}
]  (4, 1.6) .. controls (0.3,0.25) ..  (0.3,1) .. controls+(0,1.2) .. (0.3,4.5);
\node [below] at (-0.75,1.8) {$g_4$};
\node [below] at (0.75,1.8) {$g_2$};
}
\end{tikzpicture}
\caption{Part of the set $X_5$ and a geometrical representation of the action of the standard generators $g_2^{}, g_4^{} \in H_5$.}\label{picturehoupic}\end{figure}

We shall now see that, for any $n\in \{3, 4, \ldots\}$, the set $\{g_i\mid i=2,\ldots,n\}$ generates $H_n$. First, any valid eventual translation lengths (those satisfying (\ref{houcond2})) can be obtained by these generators. Secondly, the commutator (which we define as $[g,h]:=g^{-1}h^{-1}gh$ for every $g,h$ in $G$) of any two distinct elements $g_i, g_j \in H_n$ is a 2-cycle, and so conjugation of this 2-cycle by some combination of $g_k$'s will produce a 2-cycle with support equal to any two points of $X_n$. This is enough to produce any element that is `eventually a translation' i.e.\ one that satisfies condition (\ref{houcond}), and so is enough to generate all of $H_n$. An explicit finite presentation for $H_3$ can be found in \cite{johnson}, and this was generalised in \cite{sanglee} by providing finite presentations for $H_n$ for all $n>3$. 

We now describe $H_1$ and $H_2$. If $g \in H_1$, then $t_1(g)=0$ (since the eventual translation lengths of $g$ must sum to 0 by condition (\ref{houcond2})) and so $H_1= \FSym(\N)$. For $H_2$ we have $\langle g_2\rangle\cong\Z$. Using a conjugation argument similar to the one above, it can be seen that a suitable generating set for $H_2$ is  $\{g_2, ((1,1)\;(2,1))\}$. These definitions of $H_1$ and $H_2$ agree with the result for $H_n$ in \cite{brown}, that (for $n\ge3$) each $H_n$ is $\FP_{n-1}$ but not $\FP_n$ i.e.\ $H_1$ is not finitely generated and $H_2$ is finitely generated but not finitely presented. Since $H_1 = \FSym(\N)$ and $\Aut(\FSym(\N)) \cong \Sym(\N)$ (see, for example, \cite{Permutationgroups} or \cite{grouptheory87}) we will work with $H_n$ where $n\in \{2, 3, \ldots\}$.

\subsection{A reformulation of TCP($H_n$)}\label{Sectionequivproblem} 
We require knowledge of $\Aut(H_n)$. We noted above that $\Aut(H_1)\cong \Sym(\N)$, and so will work with $n\in \{2, 3, \ldots\}$.
\begin{not*} Let $g \in \Sym(X)$. Then $(h)\phi_g:=g^{-1}hg$ for all $h \in G$.
\end{not*}
From \cite{Hou2}, we have for all $n\in \{2, 3, \ldots\}$ that $N_{\Sym(X_n)}(H_n)\cong\Aut(H_n)$ via the map $\rho\mapsto \phi_\rho$ and that $N_{\Sym(X_n)}(H_n)\cong\G$. We will make an abuse of notation and consider $\G$ as acting on $X_n$ via the natural isomorphism $N_{\Sym(X_n)}(H_n)\le \Sym(X_n)$. Here $\Inn(H_n)\cong H_n$ because $H_n$ is centreless, and $S_n$ acts on $X_n$ by isometrically permuting the rays, where $g \in \G$ is an isometric permutation of the rays if and only if there exists a $\sigma \in S_n$ such that $(i,m)g=(i\sigma, m)$ for all $m \in \N$ and all $i \in \Z_n$.

\begin{not*} For any given $g \in \G$, let $\Psi: \G\rightarrow S_n$, $g\mapsto \sigma_g$ where $\sigma_g$ denotes the isometric permutation of the rays induced by $g$. Furthermore, let $\omega_g:= g\sigma_g^{-1}$. Thus, for any $g \in \G$, we have $g=\omega_g\sigma_g$ and will consider $\omega_g \in H_n$ and $\sigma_g \in S_n$. We shall therefore consider any element $g$ of $\G$ as a permutation of $X_n$ which is eventually a translation (denoted $\omega_g$) followed by an isometric permutation of the rays $\sigma_g$.
\end{not*}

\begin{defn} Let $G\le \G$ and $g, h \in \G$. Then we shall say $g$ and $h$ are $G$-conjugated if there is an $x \in G$ such that $x^{-1}gx=h$.
\end{defn}

We now relate twisted conjugacy in $H_n$ to conjugacy in $\G$. Let $c \in \G$. Then the equation for $\phi_c$-twisted conjugacy becomes:
\begin{align*}
(x^{-1})\phi_c g x=h
\Leftrightarrow c^{-1}x^{-1}cgx=h
\Leftrightarrow x^{-1}(\omega_c\sigma_cg)x=\omega_c\sigma_ch.
\end{align*}
Thus, for any $n\in \{2, 3, \ldots\}$, two elements $g, h \in H_n$ are $\phi_c$-twisted conjugate if and only if $\omega_c\sigma_cg$ and $\omega_c\sigma_ch \in \G$ are $H_n$-conjugated. Note that, if $g, h \in \G$ are $H_n$-conjugated, then $\sigma_g=\sigma_h$. Thus for the remainder of this note $a$ and $b$ will refer to the elements in $\G$ that we wish to decide are $H_n$-conjugated, where $\sigma_a=\sigma_b$. In order to solve TCP($H_n$), we will therefore produce an algorithm to search for an $x \in H_n$ which conjugates $a$ to $b$.

\section{Computations in $\G$}\label{sectioncomputations}
Our first lemma provides the generating set that we will use for $\G$.
\begin{lem}\label{lemmagenset} Let $n\in \{2, 3, \ldots\}$. Then $\G$ can be generated by 3 elements. 
\end{lem}
\begin{proof}
If $n\ge3$, two elements can be used to generate all of the isometric permutations of the rays. Our third generator will be $g_2$, the standard generator for $H_n$. Conjugating $g_2$ by the appropriate isometric permutations of the rays produces the set $\{g_i\mid i=2,\ldots,n\}$, which can then be used to generate all permutations in $H_n$. For $H_2\rtimes S_2$ we note that $H_2$ is 2-generated and that $S_2$ is cyclic.
\end{proof}

\subsection{The orbits of elements in $\G$}\label{sectionorbits}
Our main aim for this section is to describe the orbits of any element $g \in \G$ `far out'. For elements of $H_n$, any element eventually acted like a translation. In a similar way, any element of $\G$ eventually moves points in a uniform manner. More specifically, $g \in \Sym(X_n)$ is in $\G$ if and only if there exist constants $z_1(g),\ldots,z_n(g) \in \N$, $(t_1(g), \ldots, t_n(g)) \in \Z^n$, and a permutation $\sigma \in S_n$ such that for all $i \in \Z_n$
\begin{align}\label{houextcondsimpler}
(i,m)g=(i\sigma, m+t_i(g))\;\text{for all}\;m\ge z_i(g).
\end{align}
If $g \in \G$, then $g=\omega_g\sigma_g$. Therefore for any $g \in \G$, $\sigma_g$ (the isometric permutation of the rays induced by $g$) will induce the permutation denoted $\sigma$ in (\ref{houextcondsimpler}), we have that $t_i(\omega_g)=t_i(g)$ for all $i \in \Z_n$, and $z_1(\omega_g), \ldots, z_n(\omega_g)$ are suitable values for the constants $z_1(g), \ldots, z_n(g)$.
\begin{defn}\label{classes} Let $g \in \G$ and $i \in \Z_n$. Then a \emph{class} of $\sigma_g$, denoted $[i]_{g}$, is the support of the disjoint cycle of $\sigma_g$ which contains $i$ i.e.\ $[i]_g=\{i\sigma_g^d\mid d \in \Z\}$. Additionally, we define the \emph{size} of a class $[i]_g$ to be the length of the cycle of $\sigma_g$ containing $i$, i.e. the cardinality of the set $[i]_g$, and denote this by $|[i]_g|$.
\end{defn}
We shall choose $z_1(g), \ldots, z_n(g) \in \N$ to be the smallest numbers such that
\begin{align}\label{houextcond}
 (i,m)g^d=(i\sigma_g^d, m+\sum\limits_{s=0}^{d-1}t_{i\sigma_g^s}(g))\;\textrm{for all}\;m\ge z_i(g)\;\textrm{and all}\;1\le d\le |[i]_g|.
\end{align}
Note that, for any $g \in \G$ and all $i \in \Z_n$, we have $z_i(g)\ge z_i(\omega_g)$. We now justify the introduction of condition (\ref{houextcond}). Consider a $g \in \G$, $i \in \Z_n$, and $m \in \N$ such that
\[z_i(\omega_g)\le m<z_i(g)\;\text{and}\;m+t_{i}(\omega_g)<z_{i\sigma_g}(\omega_g).\]
This would mean that $(i, m)g=(i\sigma_g, m+t_{i}(\omega_g))$, but it may also be that
\[(i, m)g^2=(i\sigma_g, m+t_{i}(g))g\ne (i\sigma_g^2, m+t_{i}(g)+t_{i\sigma_g}(g)).\]
Thus the condition (\ref{houextcond}) above means that, for any $g \in \G$, the numbers $z_1(g), \ldots, z_n(g)$ capture the `eventual' way that $g$ permutes the points of a ray.

Let us fix some $g \in \G$. Consider if $\sigma_g$ acts trivially on a particular branch $i'$. This will mean that this branch has orbits like those occurring for elements of $H_n$. If $t_{i'}(g)=0$, then $g$ leaves all but a finite number of points on this branch fixed. If $t_{i'}(g)\ne0$, then for any given $m'>z_{i'}(g)$,
\[\{(i', m')g^d\mid d \in \Z\}\supset\{(i', m)\mid m\ge m'\;\text{and}\;m\equiv m' \bmod|t_{i'}(g)|\}.\]

Notice that, for any $g \in \G$, the $\sigma_g$-classes form a partition of $\Z_n$, relating to the branches of $X_n$. We now consider the case $|[k]_g|>1$. We first note, for any $i \in [k]_g$ and $m_1\ge z_{i}(g)$, that
\begin{align}\label{eqnsamebranch}
(i,m_1)g^{|[k]_g|}\in\{(i,m)\mid m \in \N\}
\end{align}
and that for any $1\le p< |[k]_g|$, $i \in [k]_g$, and $m_1\ge z_{i}(g)$,
\[(i,m_1)g^p \not\in \{(i, m)\mid m \in \N\}.\]
In fact we may compute $(i,m)g^{|[k]_g|}$ for any $i \in [k]_g$ and $m\ge z_i(g)$,
\begin{align}\label{eqnimageofg}
(i,m)g^{|[k]_g|}=(i, m+\sum\limits_{s=0}^{\mathclap{|[k]_g|-1}}t_{i\sigma_g^s}(g)).
\end{align}
In light of this, we introduce the following.
\begin{defn}
For any $g \in \G$, class $[i]_{g}$, and $k \in [i]_{g}=\{i_1, i_2, \ldots, i_q\}$, let
 \begin{align*}
{t}_{[k]}(g):= \sum\limits_{s=1}^{q} t_{i_s}(g).
\end{align*}
\end{defn}
Hence, if $t_{[k]}(g)=0$, then for all $i' \in [k]_g$ and $m'\ge z_{i'}(g)$, the point $(i',m')$ will lie on an orbit of length $|[k]_g|$. If $t_{[k]}(g)\ne0$, then (\ref{eqnimageofg}) states that for all $i' \in [k]_g$ and all $m'\ge z_{i'}(g)$, that $(i',m')g^{|[k]_g|}\ne(i',m')$. Hence when $t_{[k]}(g)=0$, almost all points of the $k$\ts{th} ray will lie on an orbit of $g$ of length $|[k]_g|$, and when $t_{[k]}(g)\ne0$ almost all points of the $k$\ts{th} ray will lie in an infinite orbit of $g$. Since different arguments will be required for finite orbits and infinite orbits, we introduce the following notation.
\begin{not*} Let $g \in \G$. Then $I(g):=\{i\in \Z_n\mid t_{[i]}(g)\ne0\}$ consists of all $i \in \Z_n$ corresponding to rays of $X_n$ which have infinite intersection with some infinite orbit of $g$. Let $I^c(g):=\Z_n\setminus I(g)$, the complement of $I(g)$.
\end{not*}
\begin{defn} Two sets are \emph{almost equal} if their symmetric difference is finite.
\end{defn}
For any $g \in \G$ and any infinite orbit $\Omega$ of $g$, our aim is now to describe a set almost equal to $\Omega$, so to have a suitable description of the infinite orbits of elements of $\G$. We work with $t_{[k]}(g)>0$, since if $t_{[k]}(g)$ is negative, we will be able to apply our arguments to $g^{-1}$. For any $i_1 \in [k]_g$ and $m_1\ge z_{i_1}(g)$, we shall compute the orbit of $(i_1, m_1)$ under $g$. First,
\[\{(i_1, m_1)g^{d|[k]_g|}\mid d \in \N\}=\{(i_1,m)\mid m>m_1, m\equiv m_1 \bmod|t_{[k]}(g)|\}.\] 
Similarly, $\{(i_1, m_1)g^{d|[k]_g|+1}\mid d \in \N\}$ is equal to
\begin{align}\label{eqnimageofg2}
\{(i_1\sigma_g,m)\mid m>m_1+t_{i_1}(g), m\equiv m_1+t_{i_1}(g) \bmod |t_{[k]}(g)|\}.
\end{align}
For every $2\le s\le |[k]_g|$, setting $i_s:=i_1\sigma_g^{s-1}$ and $m_s:=m_1+\sum\limits_{d=1}^{s-1}t_{i_d}(g)$ we have, for any $0\le r<|[k]_g|$, that
\[\{(i_1, m_1)g^{d|[k]|_g+r}\mid d \in \N\}=\{(i_{r+1}, m)\mid m>m_{r+1}, m\equiv m_{r+1} \bmod |t_{[k]}(g)|\}\]
and hence $\{(i_1, m_1)g^d\mid d \in \N\}$ is equal to
\begin{align}\label{eqnintrosetsX}
\bigsqcup\limits_{q=1}^{|[k]_g|}\{(i_q, m)\mid m>m_q, m\equiv m_q \bmod |t_{[k]}(g)|\}.
\end{align}

It is therefore natural to introduce the following.
\begin{defn}\label{defn-setsXid} Let $g \in \G$, $i_1 \in I(g)$, and $m_1 \in \N$. Then
 \[X_{i_1, m_1}(g):=\{(i_1, m)\in X_n\mid m\equiv m_1 \bmod{|t_{[i_1]}(g)|}\}.\]
Furthermore, for every $2\le s<|[k]_g|$ let $i_s:=i_1\sigma_g^{s-1}$, $m_s:=m_1+\sum\limits_{d=1}^{s-1}t_{i_d}(g)$, and
\[X_{[i_1],m_1}(g):=\bigsqcup\limits_{q=1}^{|[i_1]_g|}X_{i_q, m_q}(g).\]
Note that suppressing $m_2, \ldots, m_{|[k]_g|}$ from the notation is not ambiguous since these are uniquely determined from $i_1, m_1$, and $g$.
\end{defn}
Let us summarise what we have shown.
\begin{lem} Let $g \in \G$ and $i \in I(g)$. Then, for any infinite orbit $\Omega$ of $g$ intersecting $\{(i, m)\in X_n\mid m\ge z_i(g)\}$, there exists $i' \in I(g)$ and constants $d_1, e_1 \in \N$ such that the set
 \[X_{[i], d_1}(g)\sqcup X_{[i'], e_1}(g)\]
is almost equal to $\Omega$.
\end{lem}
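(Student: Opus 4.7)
The plan is to decompose $\Omega$ as the union of a forward tail under $g$, a backward tail under $g$, and a finite middle piece, and to identify each tail with a set of the form $X_{[\cdot],\cdot}(g)$ up to finite symmetric difference.

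First, by replacing $g$ with $g^{-1}$ if necessary, I may assume $t_{[i]}(g)>0$; the case $t_{[i]}(g)<0$ is symmetric since $[i]_g=[i]_{g^{-1}}$ and $|t_{[i]}(g)|=|t_{[i]}(g^{-1})|$. Fix a point $(i,m_1)\in\Omega$ with $m_1\ge z(g)$, which exists by hypothesis, and set $d_1:=m_1$. By condition (\ref{houextcond}) and the calculation leading to (\ref{eqnintrosetsX}), the forward orbit $\{(i,m_1)g^d:d\in\N\}$ differs from $X_{[i],d_1}(g)$ by at most finitely many points (namely, those points of $X_{[i],d_1}(g)$ with small $m$-coordinate).

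Next, I handle the backward direction. Applying $g^{-1}$ iteratively to $(i,m_1)$ initially produces points in the stable region of $[i]$ with strictly decreasing $m$-coordinates, since $t_{[i]}(g^{-1})=-t_{[i]}(g)<0$. Because $m$-coordinates are positive, after finitely many steps the backward orbit must leave the stable region $\{(i',m):m\ge z_{i'}(g)\}$. Only finitely many points of $X_n$ lie outside this region, so using infiniteness of $\Omega$, the orbit must, after finitely many more applications of $g^{-1}$, re-enter the stable region at some point $(j,m_\ast)$. For the backward orbit from $(j,m_\ast)$ onward to escape to infinity rather than returning indefinitely to the confused region, a pigeonhole argument forces $j\in I(g)$ with $t_{[j]}(g)<0$, so that $g^{-1}$ pushes the $m$-coordinate to infinity in class $[j]$.

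Applying the forward-orbit description (\ref{eqnintrosetsX}) to $g^{-1}$ starting from $(j,m_\ast)$, the backward orbit of $(i,m_1)$ from $(j,m_\ast)$ onward is almost equal to $X_{[j],m_\ast}(g^{-1})$. Since $|t_{[j']}(g^{-1})|=|t_{[j']}(g)|$ for every $j'\in[j]$, and $[j]$ is the same subset of $\Z_n$ under $\sigma_g$ and $\sigma_{g^{-1}}$ (only the direction of traversal differs), this set equals $X_{[j],e_1}(g)$ for an appropriate $e_1$, chosen so that the residue class on the branch $j$ matches (the progressions on the remaining branches of $[j]$ are then determined automatically).

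Combining, $\Omega$ is the union of its forward orbit, the singleton $\{(i,m_1)\}$, and its backward orbit, so it is almost equal to $X_{[i],d_1}(g)\cup X_{[j],e_1}(g)$. Since $t_{[i]}(g)>0$ while $t_{[j]}(g)<0$, the classes $[i]$ and $[j]$ are distinct, and hence use disjoint sets of branches. Therefore the two sets are disjoint and the union is the disjoint union $X_{[i],d_1}(g)\sqcup X_{[j],e_1}(g)$, as required. The main obstacle I anticipate is the backward-orbit step: one must argue rigorously, using only finiteness of the confused region together with infiniteness of $\Omega$, that the backward orbit ultimately stabilizes in a class $[j]$ with $t_{[j]}(g)<0$, and then translate the resulting $g^{-1}$-description of this tail into the $X_{[j],e_1}(g)$ form demanded by the statement.
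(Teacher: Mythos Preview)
Your proposal is correct and follows essentially the same approach as the paper. The paper does not give a separate proof of this lemma; it is stated as a summary of the orbit computations culminating in (\ref{eqnintrosetsX}), together with the remark that the case $t_{[k]}(g)<0$ is handled by applying the same arguments to $g^{-1}$. Your write-up simply makes explicit what the paper leaves implicit: the passage of the backward orbit through the finite ``confused'' region, the pigeonhole reason why it must eventually settle in some class $[j]$ with $t_{[j]}(g)<0$, and the identification of $X_{[j],m_\ast}(g^{-1})$ with a set of the form $X_{[j],e_1}(g)$ (which works because $|t_{[j]}(g^{-1})|=|t_{[j]}(g)|$ and the branch-by-branch residues agree modulo this quantity). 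Your observation that $[i]\neq[j]$ because their $t$-values have opposite signs, justifying the disjoint union, is a detail the paper does not spell out.
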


\subsection{Identities arising from the equation for conjugacy}\label{sectionidentitiesfromconjugacy}
In Section \ref{Sectionequivproblem} we showed that TCP($H_n$) was equivalent to producing an algorithm which, given any $a, b \in \G$, decides whether or not $a$ and $b$ are $H_n$-conjugated. In this section we shall show some necessary conditions which any $x \in H_n$ must satisfy in order to conjugate $a$ to $b$. First, some simple computations to rewrite $t_i(\sigma_ax\sigma_a^{-1})$ are needed. Note that, since $\sigma_a=\sigma_b$ is a necessary condition for $a$ and $b$ to be $H_n$-conjugated (and $\sigma_a, \sigma_b$ are computable), the following will not be ambiguous.
\begin{not*} We will write $[i]$ to denote $[i]_a$ (which is also a class of $b$).
\end{not*}

\begin{lem} For any isometric permutation of the rays $\sigma$ and any $y \in H_n$,
\[t_i(\sigma^{-1}y\sigma)=t_{i\sigma^{-1}}(y)\;\text{for all}\;i \in \Z_n.\]
\end{lem}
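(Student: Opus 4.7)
The plan is a one-line direct computation. Since $\sigma \in S_n$ is (by definition) an isometric permutation of the rays, we have $(j,m)\sigma^{\pm 1}=(j\sigma^{\pm 1},m)$ for every $(j,m)\in X_n$; in particular $\sigma^{\pm 1}$ preserves the $\N$-coordinate. Given $y\in H_n$ and $i\in \Z_n$, fix any $m\ge z_{i\sigma^{-1}}(y)$ and trace $(i,m)$ through the composition $\sigma^{-1}y\sigma$. First $\sigma^{-1}$ sends $(i,m)$ to $(i\sigma^{-1},m)$; then, by the choice of threshold and condition (\ref{houcond}), $y$ sends this to $(i\sigma^{-1},\,m+t_{i\sigma^{-1}}(y))$; and finally $\sigma$ sends this to $(i,\,m+t_{i\sigma^{-1}}(y))$.

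This is exactly the defining equation for the eventual translation length of $\sigma^{-1}y\sigma$ on the $i$\ts{th} ray (with threshold $z_{i\sigma^{-1}}(y)$). Since the translation lengths of an element of $H_n$ are uniquely determined by its eventual behaviour, this forces $t_i(\sigma^{-1}y\sigma)=t_{i\sigma^{-1}}(y)$, as required.

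The only background fact I would implicitly invoke is that $\sigma^{-1}y\sigma$ genuinely lies in $H_n$, so that $t_i(\sigma^{-1}y\sigma)$ is well-defined; this is immediate from the recollection in Section \ref{Sectionequivproblem} that $N_{\Sym(X_n)}(H_n)=\G$, so that $S_n$ normalises $H_n$. There is no real obstacle here: the lemma is essentially the statement that conjugation by a ray-permutation relabels branches without moving points along them, translated into the language of eventual translation lengths. I expect this identity to be used in the next step to read off necessary compatibility conditions between the translation lengths of $a$ and $b$ and those of any conjugator $x\in H_n$.
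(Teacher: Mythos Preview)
Your proof is correct and follows essentially the same direct computation as the paper: trace a point $(i,m)$ with $m$ large enough through $\sigma^{-1}y\sigma$ and read off the eventual translation length. The only cosmetic difference is that the paper first writes $\sigma$ in disjoint cycle notation $(i_1\;i_2\;\ldots\;i_q)$ and then checks the identity separately for $i=i_1$ and for $i=i_s$ with $1<s\le q$, whereas you treat all $i$ uniformly; your version is in fact slightly cleaner, and your choice of threshold $m\ge z_{i\sigma^{-1}}(y)$ is the sharp one.
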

\begin{proof}
Let $\sigma=\sigma_1\sigma_2\ldots\sigma_s$ be written in disjoint cycle notation, and let $\sigma_1=(i_1\;i_2\;\ldots\;i_q)$. Since $\sigma \in N_{\Sym(X_n)}(H_n)$, we have that $\sigma^{-1}y\sigma \in H_n$. We compute $t_{i_1}(\sigma^{-1}y\sigma)$ by considering the image of $(i_1, m)$ where $m\ge \max\{z_i(y)\mid i\in \Z_n\}$:
\begin{align*}
(i_1,m)\sigma^{-1}y\sigma=(i_q,m)y\sigma=(i_q,m+t_{i_q}(y))\sigma=(i_1,m+t_{i_q}(y)).
\end{align*}
Similarly, for $1<s\le q$,
\begin{align*}
(i_s,m)\sigma^{-1}y\sigma=(i_{s-1},m)y\sigma=(i_{s-1},m+t_{i_{s-1}}(y))\sigma=(i_s,m+t_{i_{s-1}}(y)).
\end{align*}
Thus $t_i(\sigma^{-1}y\sigma)=t_{i\sigma^{-1}}(y)$ for any $i \in \Z_n$.
\end{proof}

\begin{lem}\label{lem-sametranslations} Let $a, b \in \G$. Then a necessary condition for $a$ and $b$ to be $H_n$-conjugated is that, for all $[i]$ classes, $t_{[i]}(a)=t_{[i]}(b)$. Also if $x\in H_n$ conjugates $a$ to $b$, then $t_i(\omega_b)=-t_i(x)+t_i(\omega_a)+t_{i\sigma_a}(x)$ for all $i\in \Z_n$.
\end{lem}
\begin{proof} From our hypotheses, we have that
\begin{align*}
\omega_b\sigma_a&=x^{-1}\omega_a\sigma_ax\\
\Rightarrow\omega_b&=x^{-1}\omega_a(\sigma_ax\sigma_a^{-1})
\end{align*}
and so, since $\omega_a, \omega_b, x$, and $\sigma_ax\sigma_a^{-1}$ can all be considered as elements of $H_n$, we have for all $i \in \Z_n$ that
\[t_i(\omega_b)=t_i(x^{-1})+t_i(\omega_a)+t_i(\sigma_ax\sigma_a^{-1})\]
which from the previous lemma can be rewritten as
\begin{align*}
t_i(\omega_b)=-t_i(x)+t_i(\omega_a)+t_{i\sigma_a}(x).
\end{align*}
Now, for any branch $i'$, we sum over all $k$ in $[i']$
\begin{align*}
\sum\limits_{k \in [i']}t_k(\omega_b)&=-\sum\limits_{k \in [i']}t_k(x)+\sum\limits_{k \in [i']}t_k(\omega_a)+\sum\limits_{k \in [i']}t_k(x)\\
\Rightarrow\sum\limits_{k \in [i']}t_k(\omega_b)&=\sum\limits_{k \in [i']}t_k(\omega_a)\\
\Rightarrow t_{[i']}(b)&=t_{[i']}(a)
\end{align*}
as required.
\end{proof}
Thus, if $a, b \in \G$ are $H_n$-conjugated, then $I(a)=I(b)$. For any $g \in \G$ the set $I(g)$ is computable, and so the first step of our algorithm can be to check that $I(a)=I(b)$. Hence the following is not ambiguous.
\begin{not*} We shall use $I$ to denote $I(a)$ and $I(b)$.
\end{not*}

\begin{lem}\label{constantsA} Let $a, b \in \G$ be conjugate by $x \in H_n$. Then, for each class $[k]$, choosing a value for $t_{i'}(x)$ for some $i' \in [k]$ determines values for $\{t_i(x)\mid i \in [k]\}$. Moreover, let $i_1 \in [k]$ and, for $2\le s\le |[k]|$, let $i_s:=i_1\sigma_a^{s-1}$. Then the following formula determines $t_{i_s}(x)$ for all $s \in \Z_{|[k]|}$
\[t_{i_s}(x)=t_{i_1}(x)+\sum\limits_{d=1}^{s-1}(t_{i_d}(\omega_b)-t_{i_d}(\omega_a)).\]
\end{lem}
\begin{proof}
If $|[k]|=1$, then there is nothing to prove. Lemma \ref{lem-sametranslations} states that
\begin{align*}
t_i(\omega_b)=-t_i(x)+t_i(\omega_a)+t_{i\sigma_a}(x)
\end{align*}
for all $i\in \Z_n$, and so we are free to let $i:=i_s \in [k]$, where $s\in \Z_{|[k]|}$, to obtain
\begin{align*}
t_{i_s}(\omega_b)&=-t_{i_s}(x)+t_{i_s}(\omega_a)+t_{i_s\sigma_a}(x)\\
\Rightarrow t_{i_s\sigma_a}(x)&=t_{i_s}(x)+t_{i_s}(\omega_b)-t_{i_s}(\omega_a).
\end{align*}
Setting $s=1$ provides a formula for $t_{i_2}(x)$. If $2\le s< |[k]|$, then
\begin{align*}
t_{i_s\sigma_a}(x)&=t_{i_s}(x)+t_{i_s}(\omega_b)-t_{i_s}(\omega_a)\\
&=t_{i_{s-1}}(x)+t_{i_{s-1}}(\omega_b)-t_{i_{s-1}}(\omega_a)+t_{i_s}(\omega_b)-t_{i_s}(\omega_a)\\
&=\ldots\\
&=t_{i_1}(x)+\sum\limits_{d=1}^s(t_{i_d}(\omega_b)-t_{i_d}(\omega_a)).
\end{align*}
Thus, for all $s \in \Z_{|[k]|}$, we have a formula for $t_{i_s}(x)$ which depends on the computable values $\{t_i(a), t_i(b)\mid i \in [k]\}$ and the value of $t_{i_1}(x)$.
\end{proof}

\section{An algorithm for finding a conjugator in $H_n$}\label{sectionTCPHnalgorithm}
In this section we construct an algorithm which, given $a, b \in \G$ with $\sigma_a=\sigma_b$, either outputs an $x \in H_n$ such that $x^{-1}ax=b$, or halts without outputting such an $x$ if one does not exist.

We will often need to make a choice of some $i \in [k]_g$. For each class $[k]_g$ we shall do this by introducing an ordering on $[k]_g$. We shall choose this ordering to be the one defined by $i_1:=\inf [k]_g$ (under the usual ordering of $\N$) and $i_s:=i_1\sigma_g^{s-1}$ for all $2\le s\le |[k]_g|$. Hence $[k]_g=\{i_1,\ldots, i_{|[k]_g|}\}$.
\subsection{An algorithm for finding a conjugator in $\FSym$}\label{sectionFSymalg}
Many of the arguments of this section draw their ideas from \cite[Section 3]{ConjHou}. By definition, any element which conjugates $a$ to $b$ will send the support of $a$ to the support of $b$. If we wish to find a conjugator in $\FSym$, this means that the symmetric difference of these sets must be finite, whilst $\Supp(a) \cap \Supp(b)$ can be infinite.
\begin{not*} For any $g, h \in \Sym$, let $N(g, h):=\Supp(g)\cap\Supp(h)$, the intersection of the supports of $g$ and $h$.
\end{not*}
\begin{not*}
Let $g \in \G$. Then $Z(g):= \{(i,m)\in X_n\mid i \in \Z_n$ and $m<z_i(g)\}$
which is analogous to the $Z$ region used by some authors when dealing with the Houghton groups.
\end{not*}
This yields a quadratic solution to WP($\G$). First note that, for any $i\in \Z_n$, $(i,z_i(g))g=(i\sigma_g,z_i(g)+t_i(g))$. Thus if $g$ acts trivially on $\{(i,z_i(g))\mid i \in \Z_n\}$, then $\Supp(g)\subseteq Z(g)$. Secondly, by using \cite[Lem 2.1]{ConjHou}, the size of $Z(g)$ is bounded by a linear function in terms of $|g|_S$ (the word length of $g$ with respect to $S$).  Finally, for each point in $Z(g) \cup \{(i,z_i(g))\mid i \in \Z_n\}$, compute the action of $g$ (which requires $|g|_S$ computations). This is sufficient to determine if $g$ is the identity, since we have that $g$ is the identity if and only if $g$ acts trivially on  $Z(g) \cup \{(i,z_i(g))\mid i \in \Z_n\}$.

\begin{defn}\label{defnonlyrcycles} Let $g\in \G$. Then, for a fixed $r \in \N$, let $g_r$ denote the element of $\Sym(X_n)$ which consists of the product of all of the $r$-cycles of $g$. Furthermore, let $g_\infty$ denote the element of $\Sym(X_n)$ which consists of the product of all of the infinite cycles of $g$.
\end{defn}

Our strategy for deciding whether $a, b \in \G$ are $\FSym$-conjugated is as follows. We show, for any $r \in \N$, that if $a_r$ and $b_r$ are $\FSym$-conjugated, then $a_r$ and $b_r$ are conjugate by an $x\in\FSym$ where $\Supp(x)$ is computable. Similarly we show that if $a_\infty$ and $b_\infty$ are $\FSym$-conjugated, then there is a computable finite set such that there is a conjugator of $a_\infty$ and $b_\infty$ with support contained within this set. In order to decide if $a, b \in \G$ are $\FSym$-conjugated we may then decide if $a_\infty$ and $b_\infty$ are $\FSym$-conjugated, produce such a conjugator $y_1$ if one exists, and then decide if $y_1^{-1}ay_1$ and $b$ are $\FSym$-conjugated by deciding whether $(y_1^{-1}ay_1)_r$ and $b_r$ are $\FSym$-conjugated for every $r \in \N$ (which is possible since $b_r$ is non-trivial for only finitely many $r\in \N$, see Lemma \ref{lem-actioncomputable}).

\begin{lem}\label{lem-ConjHou} If $g, h \in \Sym$ are $\FSym$-conjugated, then
\begin{align*} 
  |\Supp{(g)} \setminus  N(g, h)| = |\Supp{(h)} \setminus  N(g, h)|<\infty.
\end{align*}
\end{lem}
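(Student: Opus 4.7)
The plan is to let $A := \Supp(g)$, $B := \Supp(h)$, and $S := \Supp(x)$ where $x \in \FSym$ satisfies $x^{-1}gx = h$. First I would record the standard identity $\Supp(x^{-1}gx) = \Supp(g)\cdot x$, which follows from the equivalence $q \in \Supp(h) \Leftrightarrow qx^{-1} \in \Supp(g)$. Hence $B = A\cdot x$. Observe that $N(g,h) = A \cap B$, so the two quantities in the statement are $|A\setminus B|$ and $|B\setminus A|$.

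Next I would use that $x$ fixes $X_n \setminus S$ pointwise. If $p \in A \setminus S$ then $p = p\cdot x \in A\cdot x = B$, so $A\setminus S \subseteq B$, i.e.\ $A\setminus B \subseteq S$. The symmetric argument (applying the same reasoning to $x^{-1}$) gives $B\setminus A \subseteq S$. Since $S$ is finite, both $|A \setminus B|$ and $|B \setminus A|$ are finite, which already yields the finiteness half of the conclusion.

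For the equality, the key observation is that $x$ permutes $S$ (elements of $\FSym$ restrict to bijections of their support), so $(A\cap S)\cdot x = A\cdot x \cap S\cdot x = B \cap S$; in particular $|A\cap S| = |B\cap S|$. Using the containments $A\setminus B \subseteq S$ and $B\setminus A \subseteq S$ already established, I would decompose
\begin{align*}
A \cap S &= (A\cap B \cap S) \sqcup (A\setminus B),\\
B \cap S &= (A\cap B \cap S) \sqcup (B\setminus A),
\end{align*}
and subtract to extract $|A\setminus B| = |B\setminus A|$, finishing the proof.

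The argument is essentially a bookkeeping exercise; there is no real obstacle. The only point requiring any care is step two, where one must notice that $x$ fixing $X_n\setminus S$ forces both $A\setminus B$ and $B\setminus A$ to lie inside $S$ before the finite-set counting in step three is justified.
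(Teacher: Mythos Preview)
Your proof is correct. The paper itself does not give a self-contained argument here: it simply notes that the proof of \cite[Lem.~3.2]{ConjHou} carries over verbatim under the weaker hypothesis $g,h\in\Sym$. Your argument is exactly the standard one---using that $\Supp(h)=\Supp(g)\cdot x$, that everything outside $\Supp(x)$ is fixed so the symmetric difference of the supports lands inside the finite set $\Supp(x)$, and then counting inside $\Supp(x)$ via $|A\cap S|=|B\cap S|$---and this is almost certainly what the cited lemma does as well. In short, you have supplied the details the paper delegates to the reference.
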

\begin{proof} The proof \cite[Lem 3.2]{ConjHou} applies to our more general hypotheses.
\end{proof}

\begin{lem}\label{lemgrinG}Let $g \in \G$ and $r \in \N$. Then $g_r\in \G$. Note this means that $g_r$ restricts to a bijection on $Z(g_r)$ and $X_n\setminus Z(g_r)$.
\end{lem}
\begin{proof} It is clear that $g_r \in \Sym(X_n)$. From our description of orbits in Section \ref{sectionorbits}, for all $(i,m) \in X_n\setminus Z(g)$ we have that $(i,m)$ lies either on: an infinite orbit of $g$; an orbit of $g$ of length $s\ne r$; or on an orbit of $g$ of length $r$. In the first two cases, we have that $(i, m)g_r=(i,m)$ for all $(i,m) \in  X_n\setminus Z(g)$. In the final case, we have that $(i, m)g_r=(i,m)g$ for all $X_n\setminus Z(g)$. Hence $g_r$ is an element for which there exists an isometric permutation of the rays $\sigma$, and constants $t_1(g_r), \ldots, t_n(g_r) \in \Z$ and $z_1(g_r), \ldots, z_n(g_r) \in \N$ such that for all $i \in \Z_n$
\begin{align*}
(i,m)g_r=(i\sigma, m+t_i(g_r))\;\text{for all}\;m\ge z_i(g_r)
\end{align*}
which was labelled (\ref{houextcondsimpler}) in Section \ref{sectionorbits}. Thus, $g_r \in \G$.
\end{proof}

\begin{lem} \label{lem-actioncomputable} Let $S$ be the generating set of $\G$ described in Lemma \ref{lemmagenset}. Then there is an algorithm which, given any $(i, m)\in X_n$ and any word $w$ over $S^{\pm1}$ representing $g \in \G$, computes the set $\mathcal{O}_g(i,m):=\{(i,m)g^d\mid d \in \Z\}$. Moreover the set $\{g_r\ne \id\mid r \in \N\cup\{\infty\}\}$ is finite and computable.

\end{lem}
\begin{proof} From \cite[Lem 2.1]{ConjHou}, the numbers $t_1(\omega_g), \ldots, t_n(\omega_g), z_1(\omega_g),\ldots,z_n(\omega_g)$ are computable. Hence the action of $\sigma_g$ is computable, as are the classes $[i]_g$ and the numbers $z_i(g)$ and $t_{[i]}(g)$.

We note that for all $(i',m') \in X_n$, the number $|\mathcal{O}_g(i',m')|$ is computable. First, let $(i',m') \in X_n\setminus Z(g)$. Then $|\mathcal{O}_g(i',m')|$ is infinite if $t_{[i']}(g)\ne0$ and is equal to $|[i']_g|$ otherwise. If $(i',m') \in Z(g)$, either $|\mathcal{O}_g(i',m')|$ is finite and so $(i',m')g^d=(i',m')$ for some $0\le d \le |Z(g)|$, or $(i',m')g^d \in X_n\setminus Z(g)$ for some $d \in \N$ and so $(i',m')$ lies in an infinite orbit of $g$. When $\mathcal{O}_g(i',m')$ is finite, it is clearly computable.

Now we deal with the infinite orbits of $g$. For every $i \in I(g)$ and $d_1\in \N$, the set $X_{[i], d_1}(g)$ - introduced in Definition \ref{defn-setsXid} - is computable from the numbers $t_1(g),\ldots, t_n(g)$ and classes $\{[k]_g\mid k\in\Z_n\}$. Given $(j,m) \in X_{[i], d_1}(g)$, the set $F_g(j,m):=\{(j,m)g^d\mid d \in \Z\}\cap Z(g)$ is finite and computable. Hence we may compute the $i' \in I(g)$ and $e_1 \in \N$ such that $\mathcal{O}_g(j,m)$ is almost equal to $X_{[i], d_1}(g)\sqcup X_{[i'], e_1}(g)$. Thus
\[\mathcal{O}_g(j,m)=F_g(j,m)\sqcup (X_{[i], d_1}(g)\setminus Z(g))\sqcup (X_{[i'], e_1}(g)\setminus Z(g))\]
and is therefore computable.

Since we can compute the size of orbits and the set $Z(g)$, we may determine those $r \in \N$ for which $g_r$ is non-trivial. First, record those $r$ for which there is an orbit of length $r$ within $Z(g)$. Secondly, for each $k \in I^c(g)$, compute $|[k]|$ (since then $g$ will have infinitely many orbits of length $|[k]|$). Finally compute $|I(g)|$ to determine whether or not $g_{\infty}$ is trivial. Hence the set $\{g_r\ne \id\mid r \in \N\cup\{\infty\}\}$ is finite and computable.
\end{proof}
This lemma means that, given a word $w$ in $S^{\pm1}$ representing $g \in \G$, we may compute the equations (\ref{houextcond}) - which appeared after Definition \ref{classes} - that capture the `eventual' way that $g$ acts. Since $Z(g)$ is finite, only finitely many more equations are required to completely describe the action of $g$ on $X_n$. 

This lemma also means that, given a word $w$ over $S^{\pm1}$ representing $g \in \G$ and any $r \in \N\cup\{\infty\}$, the action of the element $g_r$ can be determined.

\begin{not*} Let $g \in \Sym(X_n)$. Given any set $Y\subseteq X_n$ for which $Yg=Y$ (so that $g$ restricts to a bijection on $Y$), let $g\big|Y$ denote the element of $\Sym(X_n)$ which acts as $g$ on the set $Y$ and leaves all points in $X_n\setminus Y$ invariant i.e.\ for every $(i,m) \in X_n$
\[(i,m)(g\big|Y):=\left\{\begin{array}{ll}(i,m)g&\text{if}\;(i,m) \in Y\\
(i,m)&\text{otherwise.}\end{array}\right.\]
\end{not*}
\begin{not*} Given $g \in \G$, let $Z^*(g):= Z(g)\cap\Supp(g)$.
\end{not*}

\begin{lem} \label{lemraysjinIc} Fix an $r \in \{2, 3, \dots\}$ and let $g, h \in \G$. If $g_r$ and $h_r$ are $\FSym$-conjugated, then there exists an $x \in \FSym$ which conjugates $g_r$ to $h_r$ such that $\Supp(x)\subseteq Z^*(g_r)\cup Z^*(h_r)$.
\end{lem}
\begin{proof} Let $Z:=Z(g_r)\cup Z(h_r)$ and $Z^c:=X_n\setminus Z$. If $g_r$ and $h_r$ are $\FSym$-conjugated, then
\begin{align}\label{eqnactionmatches}
(i, m)g_r=(i,m)h_r\;\text{for all}\;(i,m) \in Z^c.
\end{align}
By Lemma \ref{lemgrinG} and (\ref{eqnactionmatches}), $g_r$ and $h_r$ restrict to a bijection of $Z^c$. Thus, $g_r$ and $h_r$ must also restrict to a bijection of $Z$. We may therefore apply Lemma \ref{lem-ConjHou} to $g_r$ and $h_r$ to obtain
\begin{align*}
|\Supp{(g_r)}\setminus N(g_r, h_r)|=|\Supp{(h_r)}\setminus N(g_r, h_r)|
\end{align*}
which, since $Z^c\subseteq N(g_r, h_r)$, implies that
\begin{align*}
|\Supp{(g_r\big|Z)}\setminus N(g_r\big|Z, h_r\big|Z)|=|\Supp{(h_r\big|Z)}\setminus N(g_r\big|Z, h_r\big|Z)|
\end{align*}
and since $Z$ is finite,
\begin{align*}
|\Supp(g_r\big|Z)|=|\Supp(h_r\big|Z)|<\infty.
\end{align*}
Since $g_r$ and $h_r$ consist of only $r$-cycles, $g_r\big|Z$ and $h_r\big|Z$ are elements of $\FSym$ with the same cycle type, and so are $\FSym$-conjugated. Thus there is a conjugator $x \in \FSym$ with $\Supp(x)\subseteq \Supp(g_r\big|Z)\cup \Supp(h_r\big|Z)=Z^*(g_r)\cup Z^*(h_r)$.
\end{proof}

\begin{lem}\label{rem-Icdone}  If $g_\infty=h_\infty$ and $g, h \in \G$ are $\FSym$-conjugated, then there is an $x \in \FSym$ which conjugates $g$ to $h$ with $\Supp(x)\subseteq Y$, a computable finite set disjoint from $\Supp(g_\infty)$.
\end{lem}
\begin{proof}
By Lemma \ref{lem-actioncomputable}, $\{g_r\ne\id\mid r\in\N\}$ is finite and computable implying that there is a computable $k\in \N$ such that $g_r=\id$ for all $r> k$. Using Lemma \ref{lemraysjinIc}, compute $x_2\in \FSym$ that conjugates $g_2$ to $h_2$ with $\Supp(x_2)\subseteq Z^*(g_2)$. Therefore $(x_2^{-1}gx_2)_\infty=h_\infty$ and $(x_2^{-1}gx_2)_2=h_2$. Let $g^{(2)}:=x_2^{-1}gx_2$. Now, for each $i=3, 4, \ldots, k$, use Lemma \ref{lemraysjinIc} to define $x_i$ to conjugate $(g^{(i-1)})_{i}$ to $h_{i}$ with $\Supp(x_i)\subseteq Z^*((g^{(i-1)})_i)\cup Z^*(h_i)$ and $g^{(i)}:=x_i^{-1}g^{(i-1)}x_i$. Then $\prod_{r=2}^{k}x_r$ conjugates $g$ to $h$ since $(x_2x_3\ldots x_k)^{-1}g(x_2x_3\ldots x_k)=x_k^{-1}(\ldots (x_3^{-1}(x_2^{-1}gx_2)x_3)\ldots)x_k$.
\end{proof}
We now reduce finding an $\FSym$-conjugator of $a$ and $b$ to the case of Lemma \ref{rem-Icdone}. In order to do this we require a well known lemma.
\begin{lem}\label{lemcentraliserlemma} Let $x \in G$ conjugate $a, b \in G$. Then, $x' \in G$ also conjugates $a$ to $b$ if, and only if, $x'=cx$ for some $c \in C_G(a)$.
\end{lem}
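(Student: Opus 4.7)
The statement is a very standard group-theoretic fact, and my plan is to verify it by a direct one-line manipulation in each direction. Since we are given that $x^{-1}ax = b$, the natural strategy is to parametrise any other potential conjugator $x'$ as $x' = cx$ for some $c \in G$ (which is always possible, taking $c = x'x^{-1}$), and then ask exactly when this new element also sends $a$ to $b$ under conjugation.

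With this parametrisation, I would compute
\[
(x')^{-1} a (x') = (cx)^{-1} a (cx) = x^{-1} c^{-1} a c\, x.
\]
Comparing this with $b = x^{-1} a x$ and using the fact that conjugation by $x$ is an injective map on $G$, the equation $(x')^{-1}ax' = b$ is equivalent to $c^{-1}ac = a$, which is exactly the statement $c \in C_G(a)$. This gives both implications simultaneously: if $x' = cx$ with $c \in C_G(a)$, then the computation above shows $(x')^{-1}ax' = x^{-1}ax = b$; conversely, if $(x')^{-1}ax' = b$, then setting $c = x'x^{-1}$ forces $c^{-1}ac = a$, so $c \in C_G(a)$.

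There is no real obstacle here; the lemma is essentially a restatement of the orbit-stabiliser principle applied to the conjugation action of $G$ on itself. The only thing to be a little careful about is the direction of the substitution (working with $c = x'x^{-1}$ versus $c = x^{-1}x'$ changes whether the centraliser element appears on the left or the right of $x$), but once the convention $x' = cx$ is fixed the argument is a two-line check.
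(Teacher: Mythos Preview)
Your proof is correct. The paper does not actually supply a proof of this lemma at all; it introduces it as ``the following well known lemma'' and moves on, so your short verification is entirely consistent with (and more explicit than) what appears there.
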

\begin{lem} \label{lemraysjinI} Let $g, h \in \G$. If $g_\infty$ and $h_\infty$ are $\FSym$-conjugated, then   there exists an $x\in \FSym$ which conjugates $g_\infty$ to $h_\infty$ with $\Supp(x)\subseteq Z(g_\infty)\cup Z(h_\infty)$.
\end{lem}
\begin{proof} We show that a bound for $z_i(x)$ is computable for each $i\in \Z_n$, inspired in part by the proof of \cite[Prop 3.1]{ConjHou}. Note that $I(g_\infty)=I(g)=I(h)=I(h_\infty)$. For all $i \in I(g_\infty)$ and all $m\ge z_i(g_\infty)$, we have that $(i, m)(g_\infty)^{|[i]_g|}=(i, m+t_{[i]}(g))$.

Our claim is that there is an $x \in \FSym$ which conjugates $g_\infty$ to $h_\infty$ with
\[(i,m)x=(i,m)\;\text{for all}\;i \in \Z_n\;\text{and for all}\;m\ge \max(z_i(g_\infty), z_i(h_\infty)).\]
Let $Z:=Z(g_\infty)\cup Z(h_\infty)$. We first work with $I^c(g_{\infty})$. Let $j \in I^c(g)$, $(j,m) \in X_n\setminus Z$, and assume that $(j,m) \in \Supp(x)$. Then $(j,m)h_\infty=(j,m)$ and so $(j,m)x^{-1}g_\infty x=(j,m)$. If $(j,m)x^{-1} \not\in \Supp(g_\infty)$, then the 2-cycle $\gamma:=((j,m)\;(j,m)x^{-1})$ is in $C_{\FSym}(g_\infty)$ and so $x':=\gamma x$ also conjugates $g_\infty$ to $h_\infty$ by Lemma \ref{lemcentraliserlemma}. Now $(j,m)\gamma x=(j,m)$ and so $(j,m) \not\in \Supp(x')$.  If $(j,m)x^{-1}=(j', m') \in \Supp(g_\infty)$, then $x$ sends $(j',m')$ to $(j,m)$. But, from the fact that $(j,m)x^{-1}g_\infty x=(j,m)$, $x$ must send  $(j',m')g_\infty$ to $(j,m)$, and so $x$ sends both $(j',m')$ and $(j',m')g_\infty$ to $(j,m)$, a contradiction. Hence $\{(j,m)\mid j \in I^c(g)\}\cap \Supp(x)\subseteq Z(g_\infty)\cup Z(h_\infty)$.

We now work with $i \in I(g_\infty)$. Let $t_{[i]}(g)>0$, since replacing $g$ and $h$ with $g^{-1}$ and $h^{-1}$ respectively will provide an argument for $t_{[i]}(g)<0$. Assume, for a contradiction, that for some $i \in I(g_\infty)$ we have that
\[z_i(x)>\max(z_i(g_\infty), z_i(h_\infty)).\]
For all $m\in \{0, 1, \ldots\}$, we have that $z_i(x)+m\ge z_i(x)>\max(z_i(g_\infty), z_i(h_\infty))$. Therefore
\begin{align*}
   &(i,z_i(x)+m)(x^{-1}g_\infty x)^{-|[i]_g|}=(i,z_i(x)+m)(h_\infty)^{-|[i]_g|}\\
   =&(i, z_i(x)+m-t_{[i]}(h_\infty))=(i,z_i(x)+m-t_{[i]}(g_\infty))
\end{align*}
and similarly
\begin{align*}
 &(i,z_i(x)+m)(x^{-1}g_\infty x)^{-|[i]_g|}=(i,z_i(x)+m)x^{-1}(g_\infty)^{-|[i]_g|}x\\
 =&(i,z_i(x)+m)(g_\infty)^{-|[i]_g|}x=(i, z_i(x)+m-t_{[i]}(g_\infty))x
\end{align*}
implying that $(i, z_i(x)+m-t_{[i]}(g_\infty))x=(i,z_i(x)+m-t_{[i]}(g_\infty))$, which contradicts the minimality of $z_i(x)$. 
\end{proof}

\begin{prop}\label{propositionfsym} Given $g, h \in \G$, there is an algorithm which decides whether or not $g$ and $h$ are $\FSym$-conjugated, and produces a conjugator in $\FSym$ if one exists.
\end{prop}
\begin{proof}
By Lemma \ref{lemraysjinI}, enumerating all possible bijections of the set $Z(g_\infty)\cup Z(h_\infty)$ is sufficient to decide whether or not $g_\infty$ and $h_\infty$ are $\FSym$-conjugated. If $g_\infty$ and $h_\infty$ are not $\FSym$-conjugated, then $g$ and $h$ are not $\FSym$-conjugated since such a conjugator would also conjugate $g_\infty$ to $h_\infty$. Let $y_1\in \FSym$ conjugate $g_\infty$ to $h_\infty$ and let $g':=y_1^{-1}gy_1$. By construction, $g'_\infty=h_\infty$.

Lemma \ref{rem-Icdone} states that there is a finite computable set $Y$ such that if $g'$ and $h$ are $\FSym$-conjugated, then there is a conjugator $y_2$ with $\Supp(y_2)\subseteq Y$. Therefore enumerating all possible bijections of $Y$ is sufficient to decide whether $g'$ and $h$ are $\FSym$-conjugated. A suitable conjugator of $g$ and $h$ is then $y_1y_2$. Note that $g$ and $h$ are $\FSym$-conjugated if and only if $g'$ and $h$ are $\FSym$-conjugated.
\end{proof}

\subsection{Reducing the problem to finding a conjugator in $\FSym$}\label{sectionreduce}
The previous section provided us with an algorithm for deciding whether $a, b \in \G$ are $\FSym$-conjugated. Our problem is to decide if $a$ and $b$ are $H_n$-conjugated. We start with Lemma \ref{lemreducetofsym}, a simple observation that allows further application of our algorithm determining conjugacy in $\FSym$.

\begin{defn} Given $g, h \in \G$ and a group $G$ such that $\FSym(X_n)\le G\le H_n$, a \emph{witness set of} $G$-\emph{conjugation} is a finite subset $V(g, h, G)$ of $\Z^n$ satisfying that if $g, h$ are $G$-conjugated, then there is an $x \in G$ such that $g=x^{-1}hx$ and $t(x) \in V(g, h, G)$.
\end{defn}
Note that if $\FSym(X_n)\le U, V\le H_n$ and $\{t(u)\mid u \in U\}=\{t(v)\mid v \in V\}$, then $U=V$. For assume that $u \in U$, $v \in H_n$, and $t(u)=t(v)$. Then $t(u^{-1}v)=\Vec{0}$ which implies that $u^{-1}v=\sigma \in \FSym(X_n)$. Thus $v=u\sigma \in U$, and deciding whether or not a given $h \in H_n$ lies in such a $U$ becomes membership of $t(h)$ in $\{t(u)\mid u \in U\}$.
\begin{lem}\label{lemreducetofsym} Let $\FSym(X_n)\le G\le H_n$. If, for any $g, h \in \G$ it is possible to compute a witness set of $G$-conjugation $V(g, h, G)$, then there is an algorithm which, given any $a, b \in \G$, decides whether they are $G$-conjugated.
\end{lem}
\begin{proof}  We use, from the previous section, that there exists an algorithm for deciding if $g, h \in \G$ are $\FSym$-conjugated. For any $\Vec{v} \in \Z^n$ with $\sum_{i=1}^n\Vec{v}_i=0$ let
\begin{align}\label{xv}
x_{\Vec{v}}:=\prod\limits_{i=2}^ng_i^{-\Vec{v}_i}
\end{align}
where the elements $\{g_i\mid i =2,\ldots, n\}$ are our generators of $H_n$ so that $t(x_{\Vec{v}})=\Vec{v}$. Thus, if $g$ and $h$ are $G$-conjugated, then there exists $\Vec{v} \in V(g, h, G)$ and $x \in \FSym$ such that $x_{\Vec{v}}x$ conjugates $g$ to $h$. Now, to decide if $g$ and $h$ are $G$-conjugated, it is sufficient to check whether any of the pairs $\{(x_{\Vec{v}}^{-1}gx_{\Vec{v}}, h)\mid \Vec{v} \in V(g, h, G)\}$ are $\FSym$-conjugated. This is because
\[x^{-1}(x_{\Vec{v}}^{-1}gx_{\Vec{v}})x=h\Leftrightarrow (x_{\Vec{v}}x)^{-1}g(x_{\Vec{v}}x)=h\]
and so a pair is $\FSym$-conjugated if and only if $g$ and $h$ are $G$-conjugated.
\end{proof}

From this lemma, if, for any $g, h \in \G$, the set $V(g, h, H_n)$ was computable (from only $g$ and $h$), then TCP($H_n$) would be solvable. We shall show that solving our problem can be achieved by producing an algorithm to decide if elements are $G$-conjugated where $G$ is a particular subgroup of $H_n$.
\begin{defn} Let $g \in \G$, $|\sigma_g|$ denote the order of $\sigma_g \in S_n$, and
\[H_n^*(g):=\{x \in H_n\mid t_i(x) \equiv 0 \bmod{\big|t_i(g^{|\sigma_g|})\big|}\;\textrm{for all}\;i \in I(g)\},\]
\[H_n(g):=\{x \in H_n\mid t_i(x)\equiv 0 \bmod{|t_{[i]}(g)|}\;\textrm{for all}\;i \in I(g)\}.\]
\end{defn}
Recall that if $a, b \in \G$ are $H_n$-conjugated, then $\sigma_a=\sigma_b$. Also, from Lemma \ref{lem-sametranslations},   $t_{[i]}(a)=t_{[i]}(b)$ for all $i \in \Z_n$. Hence, if $a$ and $b$ are $H_n$-conjugated, then $H_n^*(a)=H_n^*(b)$ and $H_n(a)=H_n(b)$. Note that $H_n^*(g)\le H_n(g)$ since, for any $i \in I(g)$, $|t_{[i]}(g)|=|t_i(g^{|[i]_g|})|$ and $|t_i(g^{|[i]_g|})|$ divides $|t_i(g^{|\sigma_g|})|$. Given $x\in H_n(g)$ and any infinite orbit $\mathcal{O}_g$ of $g$, we have that $(\mathcal{O}_g)x$ and $\mathcal{O}_g$ are almost equal, and hence elements of $H_n^*(g)$ also have this property.
\begin{lem}\label{lemreducedcasesufficient} Assume there exists an algorithm which, given any $g, h \in \G$, decides whether $g$ and $h$ are $H_n^*(g)$-conjugated. Then there exists an algorithm which, given any $g, h \in \G$, decides whether $g$ and $h$ are $H_n$-conjugated. 
\end{lem}
\begin{proof} Given $g \in \G$, construct the set
\[P_g:=\{(\Vec{v}_1,\ldots, \Vec{v}_n) \in \Z^n: 0\le\Vec{v}_i<\big|t_i(g^{|\sigma_g|})\big|\;\text{for all}\;i \in I(g)\;\text{and}\;\Vec{v}_k=0\;\text{otherwise}\}.\]
Note that, for any $y \in \G$, $P_y$ will be finite. Define $x_{\Vec{v}}$ as in (\ref{xv}) above. Note that $H_n=\bigsqcup_{\Vec{v} \in P_g}x_{\Vec{v}}H_n^*(g)$ and so any element of $H_n$ is expressible as a product of $x_{\Vec{v}}$ for some $\Vec{v} \in P_g$ and an element in $H_n^*(g)$. Thus, deciding whether any of the finite number of pairs $\{(x_{\Vec{v}}^{-1}gx_{\Vec{v}}, h)\mid \Vec{v} \in P_g\}$ are $H_n^*(g)$-conjugated is sufficient to decide whether $g$ and $h$ are $H_n$-conjugated.
\end{proof}

\begin{rem}\label{firstpoint} From Lemma \ref{lemreducetofsym}, if for any $g, h \in \G$ a set $V(g, h, H_n^*(g))$ is computable, then it is possible to decide whether any $g, h \in \G$ are $H_n^*(g)$-conjugated. By Lemma \ref{lemreducedcasesufficient}, it will then be possible to decide whether any $g, h \in \G$ are $H_n$-conjugated. From Section \ref{Sectionequivproblem}, this will mean that TCP($H_n$) is solvable.
\end{rem}
In the following two sections we will show that for any $g, h \in \G$ a witness set of $H_n^*(g)$-conjugation is computable from only $g$ and $h$. In Section \ref{sectionbound1} we show that the following is computable.
\begin{not*} Let $n\in \{2, 3, \ldots\}$ and $g, h \in \G$. Let $M_I(g, h)$ denote a number such that, if $g$ and $h$ are $H_n^*(g)$-conjugated, then there is a conjugator $x \in H_n^*(g)$ with
\begin{align*}\label{eqn-absoluteboundforI}
\sum\limits_{i\in I(g)}\!\!|t_i(x)|<M_I(g, h).
\end{align*}
\end{not*}
In Section \ref{sectionbound2} we show that for any $g, h \in \G$, numbers $\{y_j(g,h)\mid j \in I^c(g)\}$ are computable (using only $g$ and $h$) such that if there exists an $x \in H_n^*(g)$ which conjugates $g$ to $h$, then there is an $x' \in H_n^*(g)$ which conjugates $g$ to $h$ such that $t_i(x')=t_i(x)$ for all $i \in I(g)$ and $t_j(x')=y_j(g,h)$ for all $j \in I^c(g)$.

\begin{rem} \label{keypoint} Note that if the numbers $M_I(g, h)$ and $\{y_j(g,h)\mid j \in I^c(g)\}$ are computable using only $g$ and $h$ then the set $V(g, h, H_n^*(g))$ is computable from only $g$ and $h$. This is because defining $V(g, h, H_n^*(g))$ to consist of all vectors $\Vec{v}$ satisfying:
\begin{enumerate}[i)]
\item $\sum_{i \in I}|\Vec{v}_i|<M_I(g, h)$;
\item $\Vec{v}_i=y_i$ for all $i \in I^c(g)$;
\item $\sum_{i=1}^n \Vec{v}_i=0$.
\end{enumerate}
provides us with a finite set such that if $g$ and $h$ are $H_n^*(g)$-conjugated, then they are conjugate by an $x \in H_n^*(g)$ with $t(x) \in V(g, h, H_n^*(g))$.
\end{rem}

\subsection{Showing that $M_I(g, h)$ is computable}\label{sectionbound1} Let $g, h \in \G$ and $x \in H_n^*(g)$ conjugate $g$ to $h$. In this section we will show that a number $M_I(g, h)$ is computable from only the elements $g$ and $h$.

\begin{not*} Let $x \in H_n^*(a)$ conjugate $a, b \in \G$. Then, for each $i \in I$, let $l_i(x)$ be chosen so that $t_i(x)=l_i(x)|t_{[i]}(a)|$. Note, since $t_i(x)\equiv 0 \bmod{\big|t_{[i]}(a)\big|}$ for all $i \in I$, that $l_i(x) \in \Z$ for each $i \in I$.
\end{not*}
Recall that $\sigma_a=\sigma_b$ and that Lemma \ref{lem-sametranslations} tells us that, if $a$ and $b$ are $H_n$-conjugated, then $t_{[i]}(a)=t_{[i]}(b)$ for every $[i]$ class of $\sigma_a$. Thus, for every $i \in I$ and $d_1 \in \N$, we have that $X_{i_1, d_1}(a)=X_{i_1, d_1}(b)$ (the sets from Definition \ref{defn-setsXid}).  Also, $(X_{i_1, d_1}(a))x$ is almost equal to $X_{i_1, d_1}(a)$ since the set $X_{i_1, d_1}(a)$ consists of all points $(i_1,m) \in X_n$ where $m \equiv d_1 \bmod{|t_{[i_1]}(a)|}$. Moreover $(X_{[i_1], d_1}(a))x$ is almost equal to $X_{[i_1], d_1}(a)$ since $X_{[i_1], d_1}(a)$ is the union of sets $X_{i_s, d_s}(a)$ where for each $i_s \in [i_1]$ we have that $(X_{i_s, d_s}(a))x$ is almost equal to $X_{i_s, d_s}(a)$.  

\begin{rem}\label{howxacts} From Lemma \ref{lem-sametranslations}, we have for any $H_n$-conjugated $g, h \in \G$ and any $i \in I(g)$ that $t_{i\sigma_g}(x)-t_i(x)=t_i(\omega_h)-t_i(\omega_g)$. If we assume that $x \in H_n^*(g)$, we then have that $t_i(\omega_g)\equiv t_i(\omega_h)\bmod{|t_{[i]}(g)|}$. Hence if $g$ and $h$ are $H_n^*(g)$ conjugate, then for any infinite orbit $\mathcal{O}_g$ of $g$, there must be an infinite orbit of $h$ which is almost equal to $\mathcal{O}_g$. It will therefore not be ambiguous to omit $a$ and $b$ and simply write $X_{i_1, d_1}$ and $X_{[i], d_1}$.
\end{rem}

\begin{defn}\label{defnequivrel} Given $g \in \G$, we define an equivalence relation $\sim_g$ on $\{[k]_g \mid k\in I(g)\}$ as the one generated by setting $[i]_g\sim_g[j]_g$ if and only if there is an orbit of $g$ almost equal to $X_{[i], d_1}(g) \sqcup X_{[j], e_1}(g)$ for some $d_1, e_1 \in \N$. Writing $[i]\sim_g[j]$ will not be ambiguous since the relation $\sim_g$ will always be used with respect to the $\sigma_g$-classes of $g$. Note that if $a, b \in \G$ are $H_n$-conjugated, then $\sigma_a=\sigma_b$ and $a$ and $b$ produce the same equivalence relation.
\end{defn}

\begin{prop}\label{propmainargument} Let $a, b \in \G$ be $H_n^*(a)$-conjugated. Then there exists a computable constant $K(a, b)$ (computable from only $a$ and $b$) such that for any $x \in H_n^*(a)$ which conjugates $a$ to $b$ and any given $i, j$ where $[i]\sim_a[j]$, we have that $\big||[i]||l_i(x)|-|[j]||l_j(x)|\big| < K(a, b)$.
\end{prop}
\begin{proof}
We follow in spirit the proof of \cite[Prop 4.3]{ConjHou}. For convenience, we introduce notation to describe a set almost equal to $X_{[i], d_1}$.
\begin{not*} For any set $Y\subseteq X_n$, and any $\Vec{q}:=(\Vec{q}(1), \Vec{q}(2), \ldots, \Vec{q}(n)) \in \N^n$, let $Y\big|_{\Vec{q}}:=Y\setminus\{(i, m)\mid i \in \Z_n$ and $m< \Vec{q}(i)\}$.
\end{not*}
We will assume that $a, b \in \G$ and $x\in H_n^*(a)$ are known. Let $i, j \in I$ satisfy $[i]\sim_a[j]$. Then there exist $d_1, e_1 \in \N$ such that $X_{[i], d_1}\sqcup X_{[j], e_1}$ is almost equal to an infinite orbit of $a$ and hence, by Remark \ref{howxacts}, is almost equal to an infinite orbit of $b$. Denote these infinite orbits by $\mathcal{O}_a$ and $\mathcal{O}_b$ respectively. Note that $\mathcal{O}_ax=\mathcal{O}_b$.

Let $\epsilon_k$ be the smallest integers such that
\begin{enumerate}[i)]
\item for all $k \in [i]\cup [j]$, $\epsilon_k\ge \max(z_k(a), z_k(b))$;
\item for all $k \in [i]$, $\epsilon_k\equiv d_k \bmod{|t_{[i]}(a)|}$;
\item for all $k \in [j]$, $\epsilon_k\equiv e_k \bmod{|t_{[j]}(a)|}$
\end{enumerate}
and define $v\in \Z^n$ by 
$v_k=\left\{\begin{array}{ll}\epsilon_k & \mathrm{if}\ k \in [i]\cup [j]\\1 & \mathrm{otherwise.}\end{array}\right.$

We now have that
\begin{align*}
& X_{[i], d_1}\big|_{v}\subseteq X_{[i], d_1}\cap\mathcal{O}_a;\; X_{[i], d_1}\big|_{v}\subseteq X_{[i], d_1}\cap\mathcal{O}_b;\\
& X_{[j], e_1}\big|_{v}\subseteq X_{[j], e_1}\cap\mathcal{O}_a;\;\textrm{and}\;X_{[j], e_1}\big|_{v}\subseteq X_{[j], e_1}\cap\mathcal{O}_b.
\end{align*}
This allows us to decompose $\mathcal{O}_a$ and $\mathcal{O}_b$:
\begin{align*}
\mathcal{O}_a=X_{[i], d_1}\big|_v \sqcup X_{[j], e_1}\big|_v\sqcup S_{i,j}\\
\mathcal{O}_b=X_{[i], d_1}\big|_v \sqcup X_{[j], e_1}\big|_v\sqcup T_{i,j}
\end{align*}
where $S_{i,j}$ and $T_{i,j}$ are finite sets. Define $\epsilon_k'$ to be the smallest integers such that for all $k \in [i]\cup [j]$
\begin{enumerate}[i)]
\item $\epsilon_k'\ge z_k(x)$;
\item $\epsilon_k'\ge \epsilon_k+|t_k(x)|$;
\item $\epsilon_k'\equiv \epsilon_k \bmod{|t_{[k]}(a)|}$
\end{enumerate}
and define $v' \in \Z^n$ by
$v_k'=\left\{\begin{array}{ll}\epsilon_k' & \mathrm{if}\ k \in [i]\cup [j]\\1 & \mathrm{otherwise.}\end{array}\right.$

These conditions for $\epsilon_k'$ imply that $x$ restricts to a bijection from
\begin{align*}
&X_{[i], d_1}\big|_{v'}\;\text{to}\;X_{[i], d_1}\big|_{v'+t(x)}\\
\textrm{and}\;&X_{[j], e_1}\big|_{v'}\;\text{to}\;X_{[j], e_1}\big|_{v'+t(x)}.
\end{align*}
Hence $x$ restricts to a bijection between the following finite sets
\begin{equation}
\big(X_{[i], d_1}\big|_{v}\setminus X_{[i], d_1}\big|_{v'}\big)\sqcup \big(X_{[j], e_1}\big|_{v}\setminus X_{[j], e_1}\big|_{v'}\big)\sqcup S_{i,j}\label{equationfinite1}
\end{equation}
\begin{align}
\textrm{and}\;\big(X_{[i], d_1}\big|_{v}\setminus X_{[i], d_1}\big|_{v'+t(x)}\big)\sqcup \big(X_{[j], e_1}\big|_{v}\setminus X_{[j], e_1}\big|_{v'+t(x)}\big)\sqcup T_{i,j}\label{equationfinite2}.
\end{align}
By definition, $x$ eventually translates with amplitude $t_k(x)=l_k(x)\cdot|t_{[k]}(a)|$ for each $k \in [i]\sqcup[j]$. Thus
\begin{align*}
&\Big|\Big(X_{[i], d_1}|_{v} \setminus X_{[i], d_1}|_{v'+t(x)}\Big)\Big|=\Big|\big(X_{[i], d_1}|_{v} \setminus X_{[i], d_1}|_{v'}\big)\Big| + \sum\limits_{k \in [i]}l_k(x)\\ 
\textrm{and}\;&\Big|\Big(X_{[j], e_1}|_{v} \setminus X_{[j], e_1}|_{v'+t(x)}\Big)\Big|=\Big|\big(X_{[j], e_1}|_{v} \setminus X_{[j], e_1}|_{v'}\big)\Big|+ \sum\limits_{k' \in [j]}l_{k'}(x).
\end{align*}
Now, since (\ref{equationfinite1}) and (\ref{equationfinite2}) have the same cardinality, we have
\begin{align}\label{eqn-prop4.16}
\sum\limits_{k \in [i]}l_k(x) + \sum\limits_{k' \in [j]}l_{k'}(x) + |T_{i,j}| = |S_{i,j}|
\end{align} 
where $|S_{i,j}|$ and $|T_{i,j}|$ are constants computable from only $a$ and $b$. Using Lemma \ref{constantsA} we may rewrite each element of $\{l_k(x)\mid k \in [i]\}$ as a computable constant plus $l_i(x)$ and each element of $\{l_{k'}(x)\mid k \in [j]\}$ as a computable constant plus $l_j(x)$. Let $A_{i,j}$ denote the sum of all of these constants (which `adjust' the values of the translation lengths of $x$ amongst each ${\sigma_a}$ class). Now (\ref{eqn-prop4.16}) becomes
\begin{align*}
  |[i]|l_i(x) + |[j]|l_j(x)+A_{i, j}+|T_{i,j}|=|S_{i,j}|.
\end{align*}
By the generalised triangle inequality we have
\begin{align*}
&|[i]||l_i(x)|\le|[j]||l_j(x)| + |A_{i,j}| + |S_{i,j}| + |T_{i,j}|\\
\text{and}\;&|[j]||l_j(x)|\le|[i]||l_i(x)| + |A_{i,j}| + |S_{i,j}| + |T_{i,j}|.
\end{align*}
Thus
\begin{align*}
 |[i]||l_i(x)|-|[j]||l_j(x)|&\le |A_{i,j}| + |S_{i,j}| + |T_{i,j}|=:C(i, j)\\
 |[j]||l_j(x)|-|[i]||l_i(x)|&\le |A_{i,j}| + |S_{i,j}| + |T_{i,j}|=C(j, i)\\
\Rightarrow \big||[i]||l_i(x)|-|[j]||l_j(x)|\big|&\le C(i, j).
 \end{align*}
We may then complete this process for all pairs of rays $i', j' \in I$ such that there exist $d_1', e_1' \in \N$ such that $X_{[i'], d_1'} \sqcup X_{[j'], e_1'}$ is almost equal to an infinite orbit of $a$. Let $\hat{C}(a, b)$ denote the maximum of all of the $C(i',j')$.

Now, consider if $k, k' \in I$ satisfy $[k]\sim_a[k']$. This means that there exist $k^{(1)}, k^{(2)}, \ldots, k^{(f)} \in I$ and $d_1^{(0)}, d_1^{(1)}, \ldots, d_1^{(f)}, e_1^{(1)}, \ldots, e_1^{(f)}, e_1^{(f+1)} \in \N$ such that for all $p \in \Z_{f-1}$
\[X_{[k], d_1^{(0)}}\sqcup X_{[k^{(1)}], e_1^{(1)}}, X_{[k^{(p)}], d_1^{(p)}}\sqcup X_{[k^{(p+1)}], e_1^{(p+1)}},\;\text{and}\;X_{[k^{(f)}], d_1^{(f)}}\sqcup X_{[k'], e_1^{(f+1)}}\]
are almost equal to orbits of $a$. We wish to bound $\big||[k]||l_k(x)|-|[k']||l_{k'}(x)|\big|$, and will do this by producing bounds for
\begin{align*}|[k]||l_k(x)|-|[k']||l_{k'}(x)|\;\text{and}\;|[k']||l_{k'}(x)|-|[k]||l_{k}(x)|.\end{align*}
We start by rewriting $|[k]||l_k(x)|-|[k']||l_{k'}(x)|$ as
\begin{align*}
\big(|[k]||l_k(x)|-|[k^{(1)}]||l_{k^{(1)}}(x)|\big)+\big(|[k^{(1)}]||l_{k^{(1)}}(x)|-|[k^{(2)}]||l_{k^{(2)}}(x)|\big)+\ldots\\
\ldots+\big(|[k^{(f-1)}]||l_{k^{(f-1)}}(x)|-|[k^{(f)}]||l_{k^{(f)}}(x)|\big)+\big(|[k^{(f)}]||l_{k^{(f)}}(x)|-|[k']||l_{k'}(x)|\big)
\end{align*}
which by definition is bounded by
\begin{align*}
C(k, k^{(1)})+\sum\limits_{q=1}^{f-1}C(k^{(q)}, k^{(q+1)})+C(k^{(f)}, k')
\end{align*}
and so
\begin{align*}
|[k]||l_k(x)|-|[k']||l_{k'}(x)|&\le C(k, k^{(1)})+\sum\limits_{q=1}^{f-1}C(k^{(q)}, k^{(q+1)})+C(k^{(f)}, k')\\
&\le (f+1)\hat{C}(a, b)\\
&\le n\cdot \hat{C}(a, b)
\end{align*}
Similarly, $|[k']||l_{k'}(x)|-|[k]||l_k(x)|\le (f+1)\hat{C}(a, b)\le n\cdot \hat{C}(a, b)$. Thus $n\cdot \hat{C}(a, b)+1$ is a suitable value for $K(a, b)$.

Now we note that, without knowledge of the conjugator $x$, for all $k,k'$ such that $X_{[k], d_1}\sqcup X_{[k'], e_1}$ is almost equal to an infinite orbit of $a$, the sets $S_{k,k'}$ and $T_{k,k'}$ are computable, and so the constants $C(k, k')$ are also computable. Hence $\hat{C}(a, b)$ and so $K(a, b)$ are computable using only the elements $a$ and $b$.
\end{proof}
We shall now show that, if $a$ is conjugate to $b$ in $H_n^*(a)$, then there is a conjugator $x \in H_n^*(a)$ such that for all $i \in I$ there exists a $j \in I$ such that $[j]\sim_a[i]$ and $t_j(x)=0$. This will allow us to use the previous proposition to bound $|t_k(x)|$ for all $[k]\sim_a[j]$. We will produce such a conjugator using an adaptation of the element defined in \cite[Lem 4.6]{ConjHou}. As with their argument, we again use Lemma \ref{lemcentraliserlemma}, which stated that if $x \in G$ conjugates $a$ to $b$ then $y \in G$ also conjugates $a$ to $b$ if and only if there exists a $c \in C_G(a)$ such that $cx=y$.

\begin{not*} Let $g \in \G$ and $i \in I(g)$. Then $\mathfrak{C}_{g}([i]):=\{k\mid [k]\sim_g[i]\}\subseteq I(g)$. This is the set of all $k \in \Z_n$ corresponding to rays of $X_n$ whose $\sigma_g$-class is related to $[i]_g$.
\end{not*}
This following definition is inspired by \cite[Lem 4.6]{ConjHou}.
\begin{defn}\label{defn-centraliserelt} Let $h \in \G$, $i \in I(h)$, and $g\in C_{H_n}(h)$. Then $g_{[i]}\in \Sym(X_n)$ is defined to be equal to the product of all cycles of $g_\infty$ whose support have infinite intersection with a set $X_{j, d}(g)$ where $j \in \mathfrak{C}_{h}([i])$ and $d \in \N$.
\end{defn}

\begin{lem} \label{5} Let $n\in \{2, 3, \ldots\}$, $h \in  \G$, $g \in  C_{H_n} (h)$ and $i \in  I$. If $t_i(g)\ne 0$, then
$t_j(g)\ne 0$ for all $j \in  \mathfrak{C}_h([i])$.
\end{lem}
\begin{proof}If $j \in  [i]$, then by Lemma \ref{constantsA}, $t_j(g) = t_i(g)\ne 0$.

Since the equivalence relation $\sim_h$ is generated by pairs of classes $[i]\ne [j]$ with $X_{[i], d}(h)\sqcup X_{[j],e}(h)$ for some $d, e \in  \N$ almost equal to some infinite orbit of $h$, it is enough to prove the lemma in that case.
Let $(i,m) \in  X_{[i],d}$ with $m >z_i(g)$ and let $|s| >> 0$ so that $(i, m)h^s = (j', m') \in  X_{[j],e}$ where $m'>z_{j'}(g)$. If $t_{j'}(g) = 0$, then $(i,m)h^sgh^{-s} = (i,m)\ne (i,m + t_i(g)) = (i,m)g$. Hence $t_{j'}(g)\ne 0$ and, therefore, $t_k(g)\ne 0$ for all $k \in  [j']=[j]$.
\end{proof}
\begin{lem}\label{makedefnwork} Let $h\in \G$, $i \in I(h)$, and $g\in C_{H_n}(h)$. Then $g$ has no finite orbits within $\Supp((h^{|\sigma_h|})_{[i]})$. Also, if $t_j(g)\ne0$ for some $j\in \mathfrak{C}_{h}([i])$, then $\Supp(g_{[i]})=\Supp((h^{|\sigma_h|})_{[i]})$.
\end{lem}
\begin{proof} Consider if $\{(k,m)g^d\mid d\in \Z\}$ is finite for some $(k,m)\in \Supp((h^{|\sigma_h|})_{[i]})$. Then, for all $e\in \Z$, $\{(k,m)h^{-e}g^dh^e\mid d\in \Z\}$ is finite since $h^{-e}g^dh^e=g^d$. But this implies that $g$ has infinitely many finite orbits, contradicting that $g\in H_n$.

From now on fix a $j \in\mathfrak{C}_{h}([i])$ and $m>\max\{z_j(g), z_j(h)\}$. By Lemma \ref{5} if $t_j(g)\ne0$ for some $j \in \mathfrak{C}_{h}([i])$, then $t_k(g)\ne0$ for all $k\in \mathfrak{C}_{h}([i])$. Consider if $(j,m)g^d\not\in \Supp(h_{\infty})$ for some $d\in \Z$. Then there exists $e\in \N$ such that $(j,m)g^dh^eg^{-d}=(j,m)$ and because $g\in C_{H_n}(g)$ this is a contradiction: $(j,m)g^dh^eg^{-d}=(j,m)h^e\ne(j,m)$.  

Now consider if $(j,m)g^d\not\in \Supp((h^{|\sigma_h|})_{[i]})$. From the previous two paragraphs, we may assume that $(j,m)g^d$ lies in an infinite orbit of $h$ and an infinite orbit of $g$. Hence we may choose $|d|>>0$ so that $(j,m)g^d=(j',m')$ where $j'\not\in \mathfrak{C}_{h}([i])$ and $m'>\max\{z_{j'}(g), z_{j'}(h)\}$. If $ft_{[j']}(h)>0$ and $ft_{[j']}(h)-dt_{j'}(g)>0$, then
\[(j,m)g^dh^{f|[j']|}g^{-d}=(j',m' +ft_{[j']}(h)-dt_{j'}(g))\]
and so $\{(j,m)g^dh^{f|[j']|}g^{-d}\mid f\in \Z\}$ has infinite intersection with $R_{j'}$. But this contradicts that $j'\not\in\mathfrak{C}_{h}([i])$ i.e.\ that $\{(j,m)h^{|[j']|f}\mid f \in \Z\}\cap R_{j'}$ is finite. Hence $\Supp(g_{[i]})\subseteq\Supp((h^{|\sigma_h|})_{[i]})$. Assume that $\Supp((h^{|\sigma_h|})_{[i]})\not\subseteq \Supp(g_{[i]})$. Then there exists $e\in \Z$ such that $(j,m)h^e\not\in \Supp(g)$. But then $(j,m)h^egh^{-e}=(j,m)$ implies that $(j,m)g=(j,m)$, contradicting that $(j,m)\in\Supp(g)$.
\end{proof}

\begin{lem}\label{lemmacentraliser} Let $h \in \G$ and $g\in C_{H_n}(h)$. Then, for every $i\in I(h)$, $g_{[i]} \in C_{H_n}(h)$. Moreover, $t_j(g_{[i]})=t_j(g)$ if $j \in \mathfrak{C}_h([i])$ and $t_j(g_{[i]})=0$ otherwise.
\end{lem}
\begin{proof}
If $t_j(g)=0$ for some $j\in \mathfrak{C}_h([i])$, Lemma \ref{5} states that $t_k(g)=0$ for all $k \in \mathfrak{C}_h([i])$, and hence $g_{[i]}=\id_{\Sym(X_n)}$ and the statement trivially holds. We are therefore free to apply Lemma \ref{makedefnwork} so that $\Supp(g_{[i]})=\Supp((h^{|\sigma_h|})_{[i]})$. Thus for each $k\not\in \mathfrak{C}_h([i])$, $\Supp(g_{[i]})\cap R_k$ is finite and for each $k \in \mathfrak{C}_h([i])$ the action of $g_{[i]}$ can only differ from the action of $g$ on a finite subset of $R_k$. It follows that $g_{[i]}\in H_n$ and that $t_j(g_{[i]})=t_j(g)$ if $j \in \mathfrak{C}_h([i])$ and $t_j(g_{[i]})=0$ otherwise. Finally we show $h$ and $g_{[i]}$ commute. If $(k,m)\in \Supp((h^{|\sigma_h|})_{[i]})$, then $(k,m)\in \Supp(g)$ and $(k,m)\in \Supp(g_{[i]})$. Hence $(k,m)gh=(k,m)hg\Rightarrow (k,m)g_{[i]}h=(k,m)hg_{[i]}$. If $(k,m)\not\in \Supp((h^{|\sigma_h|})_{[i]})$, then $(k,m), (k,m)h\not\in \Supp(g_{[i]})$.
\end{proof}
From this lemma, if $h \in \G$ and $i\in I(h)$, then $(h^{|\sigma_h|})_{[i]} \in C_{H_n}(h)$.
\begin{lem} \label{lemmaonetixis0}
Let $a, b \in \G$ be conjugate by some $x \in H_n^*(a)$. Then there exists a conjugator $x' \in H_n^*(a)$ which, for each $i \in I$, there is a $j$ such that $[j]\sim_a[i]$ and $t_j(x')=0$.
\end{lem}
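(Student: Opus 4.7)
The plan is to modify $x$ by multiplying it with suitably chosen elements of $C_{H_n}(a) \cap H_n^*(a)$, one per $\sim_a$-class of $I$, so that within each class a single translation length is driven to zero while we remain in $H_n^*(a)$. By Lemma \ref{lemcentraliserlemma} any conjugator of $a$ to $b$ has the form $cx$ for some $c \in C_{H_n}(a)$, and keeping the product inside $H_n^*(a)$ forces $c \in C_{H_n}(a) \cap H_n^*(a)$, so this is the only freedom available.

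For each $i \in I$, Lemma \ref{lemmacentraliser} places $(a_{[i]})^{|\sigma_a|}$ in $C_{H_n}(a)$. A direct orbit computation shows this element depends only on $\mathfrak{C}_a([i])$ and has translation length $(|\sigma_a|/|[j]|)\, t_{[j]}(a)$ on each $j \in \mathfrak{C}_a([i])$ and zero on every other ray. Writing $L_{[i]} := \lcm_{j \in \mathfrak{C}_a([i])}|[j]|$, the power $c_{[i]} := (a_{[i]})^{|\sigma_a| \cdot L_{[i]}}$ lies in $C_{H_n}(a) \cap H_n^*(a)$, because its translation lengths are then $(L_{[i]}/|[j]|)\,|\sigma_a|\,t_{[j]}(a)$ on each $j \in \mathfrak{C}_a([i])$, which satisfy the congruences defining $H_n^*(a)$. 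The $c_{[i]}$ for distinct $\sim_a$-classes have disjoint ray supports, so they commute and an integer power of each may be chosen independently.

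Fix a $\sim_a$-class and a cycle $[k]$ within it with representative ray $i_1$. Lemma \ref{constantsA} writes $t_{i_s}(x) = t_{i_1}(x) + \Delta_s \,|\sigma_a|\, t_{[k]}(a)$ for integers $\Delta_s = \Delta_s(a,b)$ ($s = 1,\ldots,|[k]|$). Multiplication by $c_{[i]}^{k_{[i]}}$ shifts $t_{i_1}(x)$ by $k_{[i]}\,(L_{[i]}/|[k]|)\,|\sigma_a|\, t_{[k]}(a)$, so forcing $t_{i_s}(x') = 0$ becomes the congruence
$\Delta_s \equiv -t_{i_1}(x)/(|\sigma_a|\, t_{[k]}(a)) \pmod{L_{[i]}/|[k]|}$
for some cycle $[k]$ in the class and some $s \in \{1,\ldots,|[k]|\}$. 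Each cycle supplies $|[k]|$ candidate residues $-\Delta_s$.

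\textbf{The main obstacle} is to show that such a pair $([k],s)$ exists in every $\sim_a$-class. The argument is a Chinese-remainder-style combination of the candidate residues across the cycles in the class, exploiting the compatibility relations that $x \in H_n^*(a)$ being a genuine conjugator imposes on the $\Delta_s$'s via Lemma \ref{lem-sametranslations} and Lemma \ref{constantsA}; this is where the specific choice of modulus $|\sigma_g| \cdot t_{[i]}(g)$ in the definition of $H_n^*(g)$ becomes crucial, since it precisely aligns the $H_n^*(a)$-lattice with the shift increments coming from the $c_{[i]}$'s. Once a valid $k_{[i]}$ and ray $j([i]) \in \mathfrak{C}_a([i])$ are selected for every $\sim_a$-class, the element $x' := \big(\prod c_{[i]}^{k_{[i]}}\big)\, x$, with the product taken over representatives of the $\sim_a$-classes of $I$, conjugates $a$ to $b$ by Lemma \ref{lemcentraliserlemma}, lies in $H_n^*(a)$, and satisfies $t_{j([i])}(x') = 0$ for each class, as required.
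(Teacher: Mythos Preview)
Your proposal has a genuine gap: you name a ``main obstacle'' (the existence of a suitable integer $k_{[i]}$ and ray $j$ in each $\sim_a$-class) but do not resolve it. You only gesture at a ``Chinese-remainder-style combination \ldots\ exploiting the compatibility relations,'' without saying which relations, or why they guarantee a solution modulo $L_{[i]}/|[k]|$. Since this congruence is the entire content of the lemma in your setup, the proof is incomplete.

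The obstacle is largely self-inflicted. By passing to the coarse element $c_{[i]} = (a_{[i]})^{|\sigma_a|\,L_{[i]}}$ you force yourself to hit $t_{i_1}(x)$ with multiples of $(L_{[i]}/|[k]|)\,|\sigma_a|\,t_{[k]}(a)$, and then must argue that the residue of $t_{i_1}(x)$ lies in the right coset. The paper avoids all of this. The key point you are missing is that membership of $x$ in $H_n^*(a)$ already makes $l_j(x) := t_j(x)/|t_{[j]}(a)|$ a multiple of $|\sigma_a|$ for every $j\in I$. Hence $a_{[j]}^{-l_j(x)}$ is itself a power of $a_{[j]}^{|\sigma_a|}\in C_{H_n}(a)$; there is no need for the $\lcm$ $L_{[i]}$ at all. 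Picking one representative $j$ per $\sim_a$-class and setting
\[
x' \;:=\; \Big(\prod_{j\in R(a)} a_{[j]}^{-l_j(x)}\Big)\,x
\]
gives a conjugator (by Lemma~\ref{lemcentraliserlemma}) with $t_j(x')=0$ at each chosen $j$, and one checks $x'\in H_n^*(a)$ directly from $l_j(x)\in |\sigma_a|\Z$. The exponent is read off from $x$, not solved for; no congruence or Chinese-remainder argument enters.
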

\begin{proof} Let $x \in H_n^*(a)$ conjugate $a$ to $b$. From the definition of $H_n^*(a)$, we have for all $i \in I$ that $t_i(x)\equiv 0 \bmod{|t_i(a^{|\sigma_a|})|}$. Thus there exist constants $m_1, \ldots, m_n \in \Z$ such that, for all $i \in I$,
\[t_i(x)=m_i|t_i(a^{|\sigma_a|})|.\]
Let $R(a):=\{j^1,\ldots j^u\}\subseteq I$ be a set of representatives for the $\sim_a$-classes (so that  $a_\infty=\prod_{s=1}^u a_{[j^s]}$). Thus, given $i \in I$, there is a unique $d \in \{1,\ldots,u\}$ such that $[j^d] \sim_a [i]$. Choose some $j \in R(a)$ and consider
\[(a^{-|\sigma_a|m_j})_{[j]}x.\]
By Lemma \ref{lemmacentraliser} $(a^{d|\sigma_a|})_{[j]} \in C_{H_n}(a)$ and $(a^{d|\sigma_a|})_{[j]}\in H_n^*(a)$ for every $d \in \Z$. Now
\[t_j((a^{-|\sigma_a|m_j})_{[j]}x)=0\]
and $(a^{-|\sigma_a|m_j})_{[j]}x$ conjugates $a$ to $b$ by Lemma \ref{lemcentraliserlemma}. Thus a suitable candidate for $x'$ is
\begin{equation*}
\left(\prod\limits_{j \in R(a)}(a^{-|\sigma_a|m_j})_{[j]}\right)x.\qedhere
\end{equation*}
\end{proof}

Recall, given $a, b \in \G$, that $M_I(a, b)$ was a number such that if $a$ and $b$ are $H_n^*(a)$-conjugated, then there exists $x \in H_n^*(a)$ which conjugates $a$ to $b$ with $\sum_{i \in I}|t_i(x)|<M_I(a, b)$.
\begin{prop} Let $a, b \in \G$ be $H_n^*(a)$-conjugated. Then a number $M_I(a, b)$ is computable. \label{PropositiontheboundforiinI}
\end{prop}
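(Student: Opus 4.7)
The plan is to assemble Lemma \ref{lemmaonetixis0} and Proposition \ref{propmainargument} into a single computable bound. By Lemma \ref{lemmaonetixis0}, whenever $a$ and $b$ are conjugate in $H_n^*(a)$, there is a conjugator $x' \in H_n^*(a)$ with the property that each $\sim_a$-equivalence class of $I$ contains a representative $j$ with $t_j(x')=0$, equivalently $l_j(x')=0$. This is the crucial step: it supplies a ``base point'' in every equivalence class from which to measure distances.

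Next, fix an arbitrary $i \in I$ and let $j$ be the representative of the $\sim_a$-class of $[i]$ provided above, so $[j] \sim_a [i]$ and $l_j(x')=0$. Applying Proposition \ref{propmainargument} to $x'$ gives
\[
\bigl||[i]|\cdot|l_i(x')|-|[j]|\cdot|l_j(x')|\bigr| < K(a,b),
\]
which collapses to $|[i]|\cdot|l_i(x')| < K(a,b)$, and hence $|l_i(x')| < K(a,b)$. Recalling that $t_i(x') = l_i(x')\,|t_{[i]}(a)|$, we obtain $|t_i(x')| < K(a,b)\cdot|t_{[i]}(a)|$.

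Summing over $i \in I$ yields
\[
\sum_{i \in I}|t_i(x')| < K(a,b) \sum_{i \in I}|t_{[i]}(a)| \le n\cdot K(a,b)\cdot\max_{i \in I}|t_{[i]}(a)|.
\]
The right hand side depends only on $a$ and $b$: Proposition \ref{propmainargument} asserts computability of $K(a,b)$ from $a$ and $b$, and Lemma \ref{lem-actioncomputable} shows that the classes $[i]_a$ and the integers $t_{[i]}(a)$ are computable from $a$. Setting $M_I(a,b)$ equal to this upper bound (for strictness one may add $1$) produces the desired computable number.

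There is no serious obstacle here since the deep work is already carried out in Proposition \ref{propmainargument} and Lemma \ref{lemmaonetixis0}; the only care required is to ensure that the ``pivot'' $j$ for the triangle-style bound is chosen \emph{per equivalence class}, so that every $i \in I$ is actually related to some $j$ with $l_j(x')=0$. This is exactly what Lemma \ref{lemmaonetixis0} guarantees, so the argument runs uniformly over all of $I$ and packages cleanly into a computable $M_I(a,b)$.
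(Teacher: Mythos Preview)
Your proposal is correct and follows essentially the same approach as the paper's proof: both invoke Lemma \ref{lemmaonetixis0} to obtain a conjugator with a zero pivot $l_j(x')=0$ in each $\sim_a$-class, then apply Proposition \ref{propmainargument} to bound each $|[i]|\,|l_i(x')|$ by $K(a,b)$, and finally pass to $|t_i(x')|$ and sum to obtain $M_I(a,b)=n\cdot K(a,b)\cdot\max_{i\in I}|t_{[i]}(a)|$. The only cosmetic difference is that the paper records the sharper intermediate bound $|l_i(x')|<K/|[i]|$ and sums first within each class $[i]$, whereas you use the coarser $|l_i(x')|<K$ and sum directly over $I$; both routes yield the same final value for $M_I(a,b)$.
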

\begin{proof} Let $S(a):=\{i^1, \ldots, i^v\}\subseteq I$ be representatives of $I$, so that $\bigsqcup_{i \in S(a)} [i]=I$ and, for any distinct $d, e \in \Z_v$, we have $[i^d]\ne[i^e]$ . We work for a computable bound for $\{|l_i(x)|\mid i \in S(a)\}$ since $t_i(x)=l_i(x)|t_{[i]}(a)|$ and the numbers $|t_{[i]}(a)|$ are computable. Lemma \ref{constantsA} from Section \ref{sectionidentitiesfromconjugacy} will then provide a bound for $|l_i(x)|$ for all $i \in I$. Proposition \ref{propmainargument} says that there is a computable number $K(a, b)=:K$ such that for every $i, j \in I$ where $[i]\sim_a[j]$, we have
\begin{align*}
 \big||[i]||l_i(x)|-|[j]||l_j(x)|\big| < K.
\end{align*}
By Lemma \ref{lemmaonetixis0}, we can assume that for any given $i \in S(a)$, either $t_i(x)=0$ or there exists a $j \in I$ such that $[j]\sim_a[i]$ and $t_j(x)=0$. If $t_i(x)=0$, then we are done. Otherwise,
\begin{align*}
\big||[i]||l_i(x)|-|[j]||l_j(x)|\big| < K\Rightarrow &\big||[i]||l_i(x)|\big| < K \Rightarrow |l_i(x)| < \frac{K}{|[i]|}<K.
\end{align*}
Continuing this process for each $i \in S(a)$ (of which there are at most $n$) implies that
\begin{align*}
 \sum\limits_{i \in S(a)}|l_i(x)|< nK.
\end{align*}
We may then compute, using Lemma \ref{constantsA} from Section \ref{sectionidentitiesfromconjugacy}, a number $K'$ such that
\begin{align*}
 \sum\limits_{i \in I}|l_i(x)|< K'.
\end{align*}
A suitable value for $M_I(a, b)$ is therefore $K'\cdot \max{\{|t_{[i]}(a)|:i \in I\}}$.
\end{proof}

\subsection{Showing that the numbers $\{y_j(g,h)\mid j \in I^c\}$ are computable}\label{sectionbound2} In this section we will show, given any $g, h \in \G$, numbers $\{y_j(g, h)\mid j \in I^c(g)\}$ are computable such that if there exists an $x \in H_n^*(g)$ which conjugates $g$ to $h$, then there is an $x' \in H_n^*(g)$ which conjugates $g$ to $h$ such that $t_i(x')=t_i(x)$ for all $i \in I(g)$ and $t_j(x')=y_j(g, h)$ for all $j \in I^c(g)$.

Note that the condition on elements to be in $H_n^*(g)$ provides no restriction on the translation lengths for the rays in $I^c(g)$. This means that the arguments in this section work as though our conjugator is in $H_n$.

From Section \ref{sectionorbits} we have that for any $g \in \G$, any point $(j,m)$ such that $j \in I^c(g)$ and $m\ge z_j(g)$ lies in an orbit of $g$ of size $|[j]|$.

\begin{not*} Let $g \in \G$ and $r \in \N$. Then $I_r^c(g):=\{j \in I^c(g)\mid |[j]|=r\}$. Also, we may choose $j_r^1,\ldots,j_r^u$ such that $[j_r^1]\cup[j_r^2]\cup\ldots\cup [j_r^u]=I_r^c(g)$ and $[j_r^k]\ne [j_r^{k'}]$ for every distinct $k, k' \in \Z_u$. We shall say that $j_r^1,\ldots,j_r^u$ are representatives of $I_r^c(g)$.
\end{not*}
\begin{lem}\label{introducecjj} Let $a \in \G$. Fix an $r \in \N$, let $j_r^1,\ldots,j_r^u$ be representatives of $I_r^c(a)$, and let $d, d'$ be distinct numbers in $\Z_u$. Fix an ordering on the $r$-cycles of $a$ within $Z(a)$. Label these $\sigma_1, \ldots,\sigma_f$ and, for each $e\in\Z_f$, let $(i_e, m_e)\in \Supp(\sigma_e)$. Now define $c_{j_r^d,j_r^{d'}} \in \Sym(X_n)$ by
\[(i,m)c_{j_r^d,j_r^{d'}}:=\left\{\begin{array}{ll}(i,m+1)&\text{if}\;i \in [j_r^d]\;\text{and}\;m\ge z_i(a)\\
(i,m-1)&\text{if}\;i \in [j_r^{d'}]\;\text{and}\;m\ge z_i(a)+1\\
(i_1, m_1)a^{s-1}&\text{if}\;(i, m)=(j_r^{d'}, z_{j_r^{d'}}(a))a^{s-1}\;\text{for some}\;s\in\Z_r\\
(i_{e+1}, m_{e+1})a^{s-1}&\text{if}\;(i, m)=(i_e, m_e)a^{s-1}\;\text{for some}\;e\in\Z_{f-1},\;s\in\Z_r\\
(j_r^d, z_{j_r^d}(a))a^{s-1}&\text{if}\;(i, m)=(i_f, m_f)a^{s-1}\;\text{for some}\;s\in\Z_r\\
(i,m)&\text{otherwise.}
\end{array}\right.\]
Then $c_{j_r^d,j_r^{d'}} \in C_{H_n}(g)$. 
\end{lem}

\begin{proof} Note that $c_{j_r^d,j_r^{d'}}$ produces a bijection on the $r$-cycles of $g$, and so conjugates $g$ to $g$ i.e.\ $c_{j_r^d,j_r^{d'}} \in C_{\Sym(X_n)}(g)$. By construction it satisfies the condition to be in $H_n$.
\end{proof}
\begin{lem}\label{lem-sametransonIc} Let $g, h \in \G$ and $r \in \N$. If $x_1, x_2 \in H_n$ both conjugate $g$ to $h$ and $j_r^1,\ldots,j_r^u$ are representatives of $I_r^c(g)$, then
\[\sum\limits_{s=1}^ut_{j_r^s}(x_1)=\sum\limits_{s=1}^ut_{j_r^s}(x_2).\]
\end{lem}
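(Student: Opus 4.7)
The plan is to reduce everything to a centralizer statement. By Lemma \ref{lemcentraliserlemma}, writing $c := x_1 x_2^{-1}$, we have $c \in C_{H_n}(g)$ and $t_{j_r^s}(x_1) = t_{j_r^s}(c) + t_{j_r^s}(x_2)$ for each $s$, so it suffices to show that $\sum_{s=1}^{u} t_{j_r^s}(c) = 0$ for every $c \in C_{H_n}(g)$.

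The key observation is that $c$ preserves the set $S_r := \{p \in X_n : p \text{ lies on a } g\text{-orbit of size } r\}$. Indeed, since $c$ commutes with $g$, we have $c(\{g^d(p)\mid d\in\Z\}) = \{g^d(c(p))\mid d\in\Z\}$, so $c$ preserves $g$-orbit sizes and hence $c(S_r) = S_r$. From the orbit description in Section \ref{sectionorbits}, a far-out point $(i, m)$ lies on an infinite orbit when $i \in I(g)$ and on a finite orbit of size $|[i]_g|$ when $i \in I^c(g)$; in particular, $S_r$ coincides with $I^c_r(g) \times \N$ outside a finite set of close-in points.

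I would then define $\tilde{c} \in \Sym(X_n)$ by $\tilde{c}(p) := c(p)$ for $p \in S_r$ and $\tilde{c}(p) := p$ otherwise. Since $c$ is a bijection of $X_n$ with $c(S_r) = S_r$, $\tilde{c}$ is also a bijection of $X_n$. Far out on ray $i$: if $i \in I^c_r(g)$ then $(i, m) \in S_r$ and $\tilde{c}(i, m) = (i, m + t_i(c))$, while if $i \notin I^c_r(g)$ then $(i, m) \notin S_r$ and $\tilde{c}(i, m) = (i, m)$. Hence $\tilde{c} \in H_n$ with $t_i(\tilde{c}) = t_i(c)$ for $i \in I^c_r(g)$ and $t_i(\tilde{c}) = 0$ otherwise. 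Applying the Houghton sum condition (\ref{houcond2}) to $\tilde{c}$ yields
\begin{equation*}
\sum_{i \in I^c_r(g)} t_i(c) \;=\; \sum_{i=1}^{n} t_i(\tilde{c}) \;=\; 0.
\end{equation*}

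The final step uses Lemma \ref{constantsA}: applied to the equation $c^{-1} g c = g$ (taking $a = b = g$, $x = c$), the formula collapses to $t_{i_s}(c) = t_{i_1}(c)$ for all $i_s$ in a fixed class $[i_1]_g$. Since the classes $\{[j_r^s]_g : 1 \le s \le u\}$ partition $I^c_r(g)$ into $u$ sets of size $r$ on each of which $t_i(c)$ is constant with value $t_{j_r^s}(c)$, the displayed sum becomes
\begin{equation*}
0 \;=\; \sum_{i \in I^c_r(g)} t_i(c) \;=\; r \sum_{s=1}^{u} t_{j_r^s}(c),
\end{equation*}
so dividing by $r \ge 1$ gives $\sum_{s=1}^u t_{j_r^s}(c) = 0$ as required. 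I do not foresee a serious obstacle; the only delicate point is verifying that $\tilde{c}$ genuinely lies in $H_n$, which is immediate once we know $S_r$ and $I^c_r(g) \times \N$ coincide far out.
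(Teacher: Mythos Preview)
Your proof is correct and takes a genuinely different route from the paper.

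The paper proceeds by first constructing explicit centraliser elements $c_{j_r^d,j_r^{d'}}\in C_{H_n}(g)$ which shift the translation length from the branches in $[j_r^d]$ to those in $[j_r^{d'}]$. These are used to replace $x_1,x_2$ by $x_1',x_2'$ with vanishing translations on all of $I_r^c(g)\setminus[j_r^1]$, concentrating the whole sum on the single branch $j_r^1$. One then considers $y=x_1'(x_2')^{-1}\in C_{H_n}(g)$ and argues by contradiction: if $t_{j_r^1}(y)\ne0$ then $y$ has an infinite cycle meeting both an $r$-cycle branch and some $p$-cycle branch with $p\ne r$, so a power of $y$ would carry an $r$-cycle of $g$ to a $p$-cycle, contradicting $y\in C_{H_n}(g)$.

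Your approach bypasses both the explicit construction and the contradiction. You take $c=x_1x_2^{-1}\in C_{H_n}(g)$ directly, observe that it preserves the union $S_r$ of all $r$-orbits of $g$, and restrict $c$ to $S_r$ to obtain $\tilde c\in H_n$ whose only nonzero translations are on $I_r^c(g)$. The Houghton balance condition $\sum_i t_i(\tilde c)=0$ then gives $\sum_{i\in I_r^c(g)}t_i(c)=0$, and Lemma~\ref{constantsA} (with $a=b=g$) collapses each class to a single value, yielding $r\sum_s t_{j_r^s}(c)=0$. This is cleaner and more conceptual.

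What the paper's approach buys is that the elements $c_{j_r^d,j_r^{d'}}$ are reused immediately afterwards (in the paragraph following the lemma and again in the Appendix) to show that \emph{any} prescribed distribution $y_1,\ldots,y_u$ summing to $M_r(g,h)$ can be realised by a conjugator, and to describe generators of $(C_{H_{np}}(a))\pi$. Your argument proves the lemma more directly but does not yield these auxiliary elements as a by-product.
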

\begin{proof} Given $x_1, x_2 \in H_n$ which both conjugate $g$ to $h$, it is possible to produce, by multiplying by an element of the centraliser which is a product of elements $c_{j_r^1,k}$ (with $k \in \{j_r^2,\ldots,j_r^u\}$), elements $x_1', x_2' \in H_n$ which both conjugate $g$ to $h$ and for which $t_{j_r^s}(x_1')=0=t_{j_r^s}(x_2')$ for all $s \in \{2,\ldots, u\}$ and so by Lemma \ref{constantsA}, $t_j(x_1')=t_j(x_2')$ for all $j \in I^c_r(g)\setminus [j_r^1]$. By construction we then have that
\[t_{j_r^1}(x_1')=\sum\limits_{s=1}^ut_{j_r^s}(x_1)\;\text{and}\;t_{j_r^1}(x_2')=\sum\limits_{s=1}^ut_{j_r^s}(x_2).\]
Now consider $y:=x_1'(x_2')^{-1}$. By construction $t_j(y)=0$ for all $j \in I^c_r(g)\setminus[j_r^1]$ since $t_j(x_1')=t_j(x_2')$ for all $j \in I^c_r(g)\setminus[j_r^1]$. Also $t_j(y)=t_{j_r^1}(y)$ for all $j \in [j_r^1]$ since $y$ conjugates $g$ to $g$ (and so we also have that $y \in C_{H_n}(g)$). If $t_{j_r^1}(y)\ne 0$, then $y$ contains an infinite cycle with support intersecting the branch $j_r^1$ and a branch $j_{p} \in I^c(g)\setminus I_r^c(g)$ so that $|[j_{p}]|=p\ne r$ i.e.\ the infinite cycle contains $(j_r^1, m_1), (j_{p}, m_2) \in X_n$. This means there exists an $e \in \Z$ such that $(j_r^1, m_1)y^e=(j_{p}, m_2)$. But then $y^e \in C_{H_n}(g)$ and $y^e$ sends an $r$-cycle to an $p$-cycle where $r\ne p$, a contradiction. Hence for any $x, x' \in H_n$ which both conjugate $g$ to $h$,
\begin{equation*}\sum\limits_{s=1}^ut_{j_r^s}(x)=\sum\limits_{s=1}^ut_{j_r^s}(x').\end{equation*}
\end{proof}
From this proof, the following is well defined.

\begin{not*} Let $g, h \in \G$, $r \in \N$ and $j_r^1,\ldots,j_r^u$ be representatives of $I_r^c(g)$. Then $M_{\{j_r^1,\ldots,j_r^u\}}(g, h)$ denotes the number such that, for any $x \in H_n^*(g)$ which conjugates $g$ to $h$, $\sum_{d=1}^ut_{j_r^d}(x)=M_{\{j_r^1,\ldots,j_r^u\}}(g, h)$. Since we will fix a set of representatives, we will often denote $M_{\{j_r^1,\ldots,j_r^u\}}(g, h)$ by $M_r(g, h)$.
\end{not*}
We will show that one combination $\{y_d\in \Z\mid d \in \Z_u\}$ is computable in order to show, for any $g, h \in \G$ and any $r \in \N$, that $M_r(g, h)$ is computable. The following will be useful for this. Recall that for any $g \in \G$, $Z(g):= \{(i,m)\in X_n\mid i \in \Z_n\;\text{and}\;m<z_i(g)\}$.
\begin{not*} Let $g \in \G$ and $r\in \{2, 3, \ldots\}$. Then $\eta_r(g):=\sfrac{\big|\Supp(g_r\big|Z(g_r))\big|}{r}$ denotes the number of orbits of $g_r\big|Z(g_r)$ of size $r$. This is well defined since Lemma \ref{lemgrinG} states that $g_r$ restricts to a bijection on $Z(g_r)$. Also, let $\eta_1(g):=|Z(g)\setminus (\Supp(g\big|Z(g))|$. Since $Z(g_r)$ is finite for all $r \in \N$, we have that $\eta_r(g)$ is finite for all $r \in \N$.
\end{not*}

\begin{lem}\label{computingyj} Let $g, h \in \G$, $r \in \N$ and $j_r^1,\ldots,j_r^u$ be representatives of $I_r^c(g)$. Then $M_{\{j_r^1,\ldots,j_r^u\}}(g, h)$ and the numbers $\{y_j(g,h) \mid j \in I^c(g)\}$ are computable (using only the elements $g$ and $h$).
\end{lem}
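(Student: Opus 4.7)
The plan is to produce a closed-form computation of the invariant
\[
M_{\{j_r^1,\ldots,j_r^u\}}(g,h) = \sum_{\delta=1}^u t_{j_r^{\delta}}(x)
\]
by counting the $r$-cycles of $g_r$ and $h_r$ contained in a large truncation, then equating the two counts via the bijection of orbits induced by any conjugator $x \in H_n^*(g)$. For sufficiently large $K \in \N$, set $Y_K := \{(i,m) \in X_n : m \le K\}$; since $x^{-1}g_r x = h_r$, the map $p \mapsto px$ bijects $g_r$-orbits onto $h_r$-orbits, and so restricts to a bijection from the $r$-cycles of $g_r$ with support in $Y_K$ onto the $r$-cycles of $h_r$ with support in $Y_K x$.

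The first step is to pin down $Y_K x$ exactly. Writing $F_x := \{(i,m) : m \ge z_i(x)\}$, the element $x$ acts on $F_x$ by translation, so $(Y_K \cap F_x)x = \bigcup_i \{(i,m') : z_i(x) + t_i(x) \le m' \le K + t_i(x)\}$ whenever $K \ge \max_i z_i(x)$. Because $x$ is a bijection and $\sum_i t_i(x) = 0$, the finite complement $F_x^c$ must map onto $\bigcup_i \{(i,m') : m' < z_i(x) + t_i(x)\}$, yielding the exact identity
\[
Y_K x \; = \; \bigcup_{i=1}^n \{(i,m') : m' \le K + t_i(x)\} \; =: \; W_K,
\]
so no boundary correction is incurred.

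The next step is to count both sides. For each $\delta \in \{1,\ldots,u\}$, order $[j_r^{\delta}] = \{i_1,\ldots,i_r\}$ by $i_1 = j_r^{\delta}$ and $i_{s+1} = i_s\sigma_g$, and set $P_{\delta}(g) := \max_{1 \le k \le r}\sum_{s=1}^{k-1} t_{i_s}(g)$. Each far-out $r$-cycle of $g_r$ on $[j_r^{\delta}]$ is pinned down by its representative point $(i_1,m)$ with $m \ge z_{j_r^{\delta}}(g_r)$, and lies entirely in $Y_K$ iff $m + P_{\delta}(g) \le K$, giving
\[
T_g(K) \; = \; \eta_r(g) + \sum_{\delta=1}^u \bigl(K - P_{\delta}(g) - z_{j_r^{\delta}}(g_r) + 1\bigr).
\]
The analogous count of $r$-cycles of $h_r$ in $W_K$ naively produces the per-$k$ constraint $m + \sum_{s<k} t_{i_s}(h) \le K + t_{i_k}(x)$; the crucial algebraic step is to invoke Lemma \ref{constantsA} to rewrite $t_{i_k}(x) = t_{j_r^{\delta}}(x) + \sum_{s<k}(t_{i_s}(h) - t_{i_s}(g))$, which collapses the constraint to $m \le K + t_{j_r^{\delta}}(x) - \sum_{s<k} t_{i_s}(g)$ and whose tightest instance is $m \le K + t_{j_r^{\delta}}(x) - P_{\delta}(g)$ with exactly the same $P_{\delta}(g)$ as on the $g$-side.

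Equating the two cycle counts and cancelling the $K$- and $P_{\delta}(g)$-terms then yields
\[
M_{\{j_r^1,\ldots,j_r^u\}}(g, h) \; = \; \bigl(\eta_r(g) - \eta_r(h)\bigr) + \sum_{\delta=1}^u \bigl(z_{j_r^{\delta}}(h_r) - z_{j_r^{\delta}}(g_r)\bigr),
\]
whose right-hand side depends only on $g$ and $h$ and is computable via Lemma \ref{lem-actioncomputableforgr}. The main obstacle is the coordination of the two structural inputs: establishing the exact identity $Y_K x = W_K$ so that counting $h_r$-cycles in $W_K$ really captures $T_g(K)$, and then applying Lemma \ref{constantsA} in just the right way so that the $x$-dependent individual $t_{i_k}(x)$-terms fuse into a single $t_{j_r^{\delta}}(x)$-constraint reproducing $P_{\delta}(g)$; without either, residual $x$-dependent terms would survive and obstruct the computability of $M_r(g,h)$.
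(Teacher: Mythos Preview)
Your proof is correct and arrives at the same closed formula
\[
M_r(g,h) = (\eta_r(g) - \eta_r(h)) + \sum_{\delta=1}^{u}\bigl(z_{j_r^{\delta}}(h_r) - z_{j_r^{\delta}}(g_r)\bigr)
\]
as the paper, but by a genuinely different route. The paper argues \emph{constructively}: it writes down explicit candidate values $y_k(g,h)$ for each representative $k$ and then verifies, via a three-case analysis on the sign of $\eta_r(g)-\eta_r(h)$, that a conjugator with exactly those translation lengths can be built by shuttling the surplus $r$-cycles between $Z(g_r)$, $Z(h_r)$, and the first far-out cycles on $[j_r^1]$. Summing those $y_k$ gives $M_r$.

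You instead derive $M_r$ \emph{directly} by counting. Using an arbitrary conjugator $x$ to set up an exact bijection between the $r$-cycles of $g_r$ in a truncation $Y_K$ and the $r$-cycles of $h_r$ in $Y_K x$, you compute both totals explicitly; the key move is invoking Lemma~\ref{constantsA} to collapse the per-branch constraints on the $h$-side so that the same $P_\delta(g)$ appears on both sides, after which equating the counts cancels all $K$- and $P_\delta$-terms and leaves the stated formula. This avoids the paper's case analysis entirely and makes the independence of $M_r$ from the conjugator manifest. The trade-off is that you only compute the sum $M_r$, not the individual $y_k$'s the paper writes down; but this is harmless for the overall algorithm, since Lemma~\ref{lem-sametransonIc} (and its proof via the centraliser elements $c_{j_r^d,j_r^{d'}}$) already shows that any decomposition $\sum_\delta y_\delta = M_r$ is realised by some conjugator.
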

\begin{proof}
For each $r \in \N$, any conjugator of $g$ and $h$ must send the $r$-cycles of $g$ to the $r$-cycles of $h$. Fix an $r\in\N$ and let $j_r^1,\ldots,j_r^u$ be representatives of $I_r^c(g)$. Given any $g, h \in \G$ which are $H_n$-conjugated, let
\begin{align}\label{eqn-secondvalues}
y_k(g, h):=z_k(h)-z_k(g)\;\text{for all}\;k\in \{j_r^2,\ldots,j_r^u\}.\end{align}
\begin{align}\label{eqn-thirdvalues}
y_{j_r^1}(g,h):=z_{j_r^1}(h)-z_{j_r^1}(g)+\eta_r(g)-\eta_r(h).
\end{align}
We work towards proving that the values for $y_k(g,h)$ defined in (\ref{eqn-secondvalues}) and (\ref{eqn-thirdvalues}) are suitable in 3 steps. First, consider if $\eta_r(g)=\eta_r(h)$. This means that there is a conjugator in $\FSym$ which conjugates $g_r\big|Z(g_r)$ to $h_r\big|Z(h_r)$ and hence the values are sufficient. Secondly, consider if $\eta_r(g)=\eta_r(h)+d$ for some $d \in \N$. In this case, first send $\eta_r(h)$ $r$-cycles in $Z(g_r)$ to those in $Z(h_r)$. Then send the $d$ remaining cycles in $Z(g_r)$ to the first $d$ $r$-cycles on the branches $[j_r^1]$ by increasing $y_{j_r^1}(g,h)$ by $d$. Finally, if $\eta_r(g)=\eta_r(h)-e$ for some $e \in \N$, then send the $\eta_r(g)$ $r$-cycles in $Z(g_r)$ to $r$-cycles in $Z(h_r)$ and then send the first $e$ $r$-cycles of $g$ on the branches $[j_r^1]$ to the remaining $r$-cycles in $Z(h_r)$ by decreasing $y_{j_r^1}(g,h)$ by $e$. With all of these cases, the values defined in (\ref{eqn-secondvalues}) and (\ref{eqn-thirdvalues}) are suitable.
\end{proof}

\begin{proof}[Proof of Theorem \ref{main2}] From Remark \ref{firstpoint} and Remark \ref{keypoint} of Section \ref{sectionreduce}, TCP($H_n$) is solvable if, given $a, b \in \G$, the numbers $M_I(a, b)$ and $\{y_j(a, b) \mid j \in I^c\}$ are computable from only $a$ and $b$. The computability of these numbers was shown, respectively, in  Proposition \ref{PropositiontheboundforiinI} and Lemma \ref{computingyj}.
\end{proof}

\section{Applications of Theorem \ref{main2}}
Our strategy is to use \cite[Thm. 3.1]{orbitdecide}. We first set up the necessary notation.
\begin{defn} Let $H$ be a group and $G\unlhd H$. Then $A_{G\unlhd H}$ denotes the subgroup of $\Aut(G)$ consisting of those automorphisms induced by conjugation by elements of $H$ i.e.\ $A_{G\unlhd H}:=\{\phi_h\mid h \in H\}$.
\end{defn}
\begin{defn} Let $G$ be a finitely presented group. Then $A \le \Aut(G)$ is \emph{orbit decidable} if, given any $a,b \in G$, there is an algorithm which decides whether there is a $\phi \in A$ such that $a\phi= b$. If $\Inn(G)\le A$, then this is equivalent to finding a $\phi \in A$ and $x \in G$ such that $x$ conjugates $a\phi$ to $b$.
\end{defn}
The algorithmic condition in the following theorem means that certain computations for $D, E,$ and $F$ are possible. This is satisfied by our groups being given by recursive presentations, and the maps between them being defined by the images of the generators.
\begin{thm}\emph{(}Bogopolski, Martino, Ventura~\cite[Thm. 3.1]{orbitdecide}\emph{)}.\label{thmorbitdecidability}
Let
 $$
1\longrightarrow D \longrightarrow  E \longrightarrow F \longrightarrow 1
 $$
be an algorithmic short exact sequence of groups such that
\begin{itemize}
\item[(i)] $D$ has solvable twisted conjugacy problem,
\item[(ii)] $F$ has solvable conjugacy problem, and
\item[(iii)] for every $1\neq f\in F$, the subgroup $\langle f\rangle$ has finite index in its centralizer $C_F(f)$, and
there is an algorithm which computes a finite set of coset representatives, $z_{f,1},\ldots ,z_{f,t_f}\in F$,
 $$
C_F(f)=\langle f \rangle z_{f,1}\sqcup \cdots \sqcup \langle f \rangle z_{f,t_f}.
 $$
\end{itemize}
Then, the conjugacy problem for $E$ is solvable if and only if the action subgroup $A_{D\unlhd E}=\{\phi_g\mid g \in E\} \le Aut(D)$ is orbit decidable.
\end{thm}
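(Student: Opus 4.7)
The forward direction is essentially a restriction of CP($E$). Given $a,b\in D$, the condition that some $\phi\in A_{D\unlhd E}$ sends $a$ to $b$ is, by definition, the condition that some $g\in E$ satisfies $g^{-1}ag=b$; so a solution to CP($E$) applied to $a,b$ viewed in $E$ decides orbit membership.

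For the harder converse, assume orbit decidability of $A_{D\unlhd E}$. Given $u,v\in E$, I would first use (ii) to test whether $\pi(u)$ and $\pi(v)$ are conjugate in $F$; if not, answer ``no,'' and if so, use the algorithmic hypothesis on the short exact sequence to compute a conjugator in $F$ and lift it to $w\in E$, replacing $u$ by $w^{-1}uw$. We may thus assume $\pi(u)=\pi(v)=:f$. If $f=1$, then $u,v\in D$, and existence of a conjugator in $E$ is precisely existence of $\phi\in A_{D\unlhd E}$ with $u\phi=v$, decidable by orbit decidability. If $f\ne 1$, any conjugator $e$ must satisfy $\pi(e)\in C_F(f)$; moreover, left-multiplying $e$ by $u$ leaves $e^{-1}ue$ unchanged but translates $\pi(e)$ by $f$, so we may restrict to $\pi(e)$ running over coset representatives of $\langle f\rangle$ in $C_F(f)$. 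Hypothesis (iii) produces such representatives $z_{f,1},\ldots,z_{f,t_f}$ algorithmically; lift each to $\tilde z_i\in E$.

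Any candidate conjugator is then of the form $e=u^k\tilde z_i d$ with $k\in\Z$, $i\in\{1,\ldots,t_f\}$, and $d\in D$. Since $u$ commutes with $u^k$, the equation $e^{-1}ue=v$ collapses to $d^{-1}(\tilde z_i^{-1}u\tilde z_i)\,d=v$. Writing $u_i:=\tilde z_i^{-1}u\tilde z_i$, both $u_i$ and $v$ have image $f$, so $\alpha_i:=u_iv^{-1}\in D$. Defining $\phi_v\in\Aut(D)$ by $\phi_v(d):=vdv^{-1}$ (well-defined since $D\unlhd E$), the equation rearranges to $\alpha_i=d\,\phi_v(d)^{-1}$, equivalently that $\alpha_i^{-1}$ is $\phi_v$-twisted conjugate to $1_D$ in $D$. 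Apply TCP($D$), supplied by (i), to each of the finitely many $i$ and answer ``yes'' if some $i$ succeeds.

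The main obstacle is justifying the finiteness of the candidate set in the middle stage: a priori $C_F(f)$ could be infinite, but (iii) supplies exactly the right cofiniteness (over $\langle f\rangle$), and the observation that $e$ and $ue$ produce the same conjugate of $u$ ensures that $\langle f\rangle$-cosets are the right quotient. The remaining delicate points are the algorithmic lifting of $F$-level conjugators and coset representatives to $E$, and verifying that the rearranged equation in $D$ is an honest instance of TCP($D$) with computable inputs---both of which rely on the algorithmic structure of the short exact sequence, i.e.\ on $\pi$, a set-theoretic section, and the group operations of $D,E,F$ all being computable from generators.
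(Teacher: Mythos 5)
This statement is quoted in the paper as Theorem 3.1 of Bogopolski--Martino--Ventura and is not proved there, so there is no internal proof to compare against; your sketch is, in essence, a correct reconstruction of the original BMV argument: reduce to the case $\pi(u)=\pi(v)=f$ using CP($F$), handle $f=1$ by orbit decidability, and for $f\neq1$ use the coset decomposition of $C_F(f)$ from (iii) together with the observation that replacing $e$ by $ue$ does not change $e^{-1}ue$ to cut the candidate conjugators down to finitely many families $u^k\tilde z_i d$, each of which collapses to a single twisted conjugacy instance in $D$. Your rearrangement to $\alpha_i=d\,\phi_v(d)^{-1}$, i.e.\ $(d^{-1})\phi_v\,\alpha_i^{-1}d=1$, is a correct TCP($D$) instance in the paper's convention, and the points you flag (extracting an explicit $F$-conjugator from the decision procedure by enumeration, lifting it and the $z_{f,i}$ to $E$, and describing $\phi_v$ to the TCP($D$) algorithm via images of generators) are exactly what the ``algorithmic short exact sequence'' hypothesis is there to supply.
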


\begin{remR} For all that follows, the action subgroup $A_{D\unlhd E}$ is provided as a recursive presentation where the generators are words from $\Aut(D)$.
\end{remR}
\subsection{Conjugacy for finite extensions of $H_n$}
We shall say that $B$ is a finite extension of $A$ if $A\unlhd B$ and $A$ is finite index in $B$. The following is well known.
\begin{lem}\label{lem-finitegen} If $G$ is finitely generated and $H$ is a finite extension of $G$, then $H$ is finitely generated.
\end{lem}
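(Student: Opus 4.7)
The plan is to prove this by exhibiting an explicit finite generating set for $H$, built from a finite generating set of $G$ together with a finite transversal for $G$ in $H$. Normality of $G$ in $H$ is actually not required here; finite index alone suffices, but we use the hypothesis as stated.

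First, I would fix a finite generating set $S$ of $G$ (which exists by assumption) and pick a (necessarily finite) set $T = \{t_1, \ldots, t_k\}$ of right coset representatives for $G$ in $H$, so that
\[
H = \bigsqcup_{i=1}^{k} G t_i.
\]
I claim $S \cup T$ generates $H$. Given any $h \in H$, there is a unique $i$ with $h \in G t_i$, so $h = g t_i$ for some $g \in G$. Since $S$ generates $G$, $g$ can be written as a word in $S^{\pm 1}$, and hence $h$ is a word in $(S \cup T)^{\pm 1}$. As $|S \cup T| \le |S| + k < \infty$, $H$ is finitely generated.

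There is essentially no obstacle here; the only subtlety is just to confirm that a finite transversal exists (immediate from $[H:G] < \infty$) and to be careful that one can express every element of $H$ in the chosen coset form. The result does not even need the normality hypothesis in the definition of finite extension given in the paper.
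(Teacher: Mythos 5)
Your proof is correct: the paper states this lemma without proof, labelling it ``well known'', and your argument (write $h = g t_i$ with $g \in \langle S\rangle$ and $t_i$ in a finite transversal, so $S \cup T$ generates $H$) is precisely the standard argument intended. Your remark that normality of $G$ in $H$ is not needed, only finiteness of the index, is also accurate.
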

\begin{prop}\label{extensionsresult} Let $n\in \{2, 3, \ldots\}$. If $E$ is a finite extension of $H_n$, then CP($E$) is solvable.
\end{prop}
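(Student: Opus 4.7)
The plan is to apply Theorem \ref{thmorbitdecidability} to the short exact sequence
$$1 \longrightarrow H_n \longrightarrow E \longrightarrow F \longrightarrow 1$$
where $F := E/H_n$ is a finite group. The strategy reduces the conjugacy problem in $E$ to orbit decidability of the action subgroup $A_{H_n\unlhd E} \le \Aut(H_n)=\G$.

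First I would dispose of the three numbered hypotheses. Condition (i), that $H_n$ has solvable twisted conjugacy problem, is precisely Theorem \ref{main2}. Condition (ii) is trivial because $F$ is finite, so its conjugacy problem is solvable by inspection of a multiplication table. Condition (iii) is automatic whenever $F$ is finite: for any $1\ne f\in F$, $C_F(f)$ is finite, so $\langle f\rangle$ has finite index in $C_F(f)$, and the finitely many coset representatives can be listed algorithmically.

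The heart of the argument is orbit decidability of $A_{H_n\unlhd E}$. Since $H_n$ is centreless, $\Inn(H_n)\cong H_n$, and since $[E:H_n]=|F|<\infty$, the subgroup $A_{H_n\unlhd E}$ is a finite extension of $\Inn(H_n)$ inside $\Aut(H_n)=\G$. I would fix coset representatives $g_1 H_n,\ldots,g_k H_n$ of $H_n$ in $E$ and take $\phi_i:=\phi_{g_i}$ as representatives of $\Inn(H_n)$ in $A_{H_n\unlhd E}$. Then, given $a,b\in H_n$, deciding whether there is some $\phi\in A_{H_n\unlhd E}$ and $x\in H_n$ with $x^{-1}(a\phi)x=b$ (which is the formulation of orbit decidability that uses $\Inn(H_n)\le A_{H_n\unlhd E}$) reduces to checking, for each $i\in\{1,\ldots,k\}$, whether $a$ and $b$ are $\phi_i$-twisted conjugate in $H_n$. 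Each of these finitely many instances is decidable by Theorem \ref{main2}.

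The main subtlety, rather than any genuine obstacle, will be the algorithmic bookkeeping: ensuring the $\phi_i$ can be exhibited effectively as elements of $\G$ so that $A_{H_n\unlhd E}$ is presented in the form demanded by the remark preceding Theorem \ref{thmorbitdecidability}. Since $E$ is a finite extension of the finitely generated group $H_n$, Lemma \ref{lem-finitegen} ensures $E$ is finitely generated; each $g_i$ is thus a word in a finite generating set of $E$, and its image in $\G=\Aut(H_n)$ is computable via the generating set of Lemma \ref{lemmagenset}. With orbit decidability of $A_{H_n\unlhd E}$ in hand, Theorem \ref{thmorbitdecidability} immediately yields that CP($E$) is solvable.
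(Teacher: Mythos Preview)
Your approach is essentially the same as the paper's: apply Theorem~\ref{thmorbitdecidability} to the short exact sequence $1\to H_n\to E\to F\to 1$, dispose of (i)--(iii) exactly as you do, and then establish orbit decidability of $A_{H_n\unlhd E}$ by reducing to finitely many conjugacy checks indexed by coset representatives of $\Inn(H_n)$ in $A_{H_n\unlhd E}$. The paper exploits the concrete splitting $\Aut(H_n)=H_n\rtimes S_n$ to take these representatives as the isometric ray-permutations $E_\sigma:=\langle\sigma_e:e\in E\rangle\le S_n$, whereas you work with abstract coset representatives $g_1,\ldots,g_k$; the substance is identical.

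One terminological slip worth correcting: after you write down the condition $x^{-1}(a\phi_i)x=b$, you call this ``$\phi_i$-twisted conjugacy of $a$ and $b$''. It is not. Twisted conjugacy is $(x^{-1})\phi_i\, a\, x=b$, which (per Section~\ref{Sectionequivproblem}) is equivalent to $g_ia$ and $g_ib$ being conjugate in $H_n$. What you actually need is whether $g_i^{-1}ag_i$ and $b$ are conjugate in $H_n$ --- an instance of ordinary CP$(H_n)$, since $g_i^{-1}ag_i\in H_n$ by normality. This does not damage the argument (CP$(H_n)$ is solvable, indeed a special case of Theorem~\ref{main2}), but you should invoke the conjugacy problem for $H_n$ here rather than twisted conjugacy. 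The paper does exactly this, citing either Theorem~\ref{main2} or \cite[Thm.~1.2]{ConjHou} for the final step.
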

\begin{proof} Within the notation of Theorem \ref{thmorbitdecidability}, set $D:=H_n$ and $F$ to be a finite group so to realise $E$ as a finite extension of $H_n$. Conditions (ii) and (iii) of Theorem \ref{thmorbitdecidability} are satisfied since $F$ is finite. Theorem \ref{main2}, the main theorem of the previous section, states that condition (i) is satisfied. Thus CP($E$) is solvable if and only if $A_{H_n\unlhd E}=\{\phi_e\mid e \in E\}$ is orbit decidable. We note that $A_{H_n\unlhd E}$ contains a copy of $H_n$ (since $H_n$ is centreless). Moreover, it can be considered as a group lying between $H_n$ and $\G$. Hence $A_{H_n\unlhd E}$ is isomorphic to a finite extension of $H_n$, and so by Lemma \ref{lem-finitegen} is finitely generated. Thus $A_{H_n\unlhd E}=\langle \phi_{e_1}, \phi_{e_2}, \ldots, \phi_{e_k}\rangle$ where $\{e_1, \ldots, e_k\}$ is a finite generating set of $E$. From Lemma \ref{lem-actioncomputable}, given any $g \in N_{\Sym(X_n)}(H_n)\cong \Aut(H_n)$, we may compute $\sigma_g$: the isometric permutation of the rays induced by $g$. Thus we may compute $\langle \sigma_{e_i}\mid i \in \Z_k \rangle=:E_\sigma$. Now, given $a, b \in H_n$, our aim is to decide whether there exists $\phi_e \in A_{H_n\unlhd E}$ such that $(a)\phi_e=b$. Since $\Inn(H_n)\le A_{H_n\unlhd E}$, this is equivalent to finding a $\tau \in E_\sigma$ and $x \in H_n$ such that $(x\tau)^{-1}a(x\tau)=b$, which holds if and only if $x^{-1}ax=\tau b\tau^{-1}$.

Finally, since $E_\sigma$ is finite (there are at most $n!$ permutations of the rays), searching for an $x \in H_n$ which conjugates $a$ to $\sigma_eb\sigma_e^{-1}$ for all $\sigma_e \in E_\sigma$ provides us with a suitable algorithm. Searching for such a conjugator can be achieved by Theorem \ref{main2} or \cite[Thm. 1.2]{ConjHou}.
\end{proof}
\subsection{Describing $\Aut(U)$ for $U$ finite index in $H_n$}\label{secficonj}
Recall that $g_2,\ldots, g_n$ were elements of $H_n$ such that $g_i$: translates the first branch of $X_n$ by 1; translates the $i$\ts{th} branch by -1; sends $(i,1)$ to $(1,1)$; and does not move any points of the other branches (which meant that, if $n\in\{3, 4, \ldots\}$, then $H_n=\langle g_i\mid i=2,\ldots, n\rangle$). For any given $n\in \{2, 3, \ldots\}$, the family of finite index subgroups $U_p\le H_n$ were defined (for $p\in \N$) in \cite{Hou2} as follows. Note that  $\FAlt(X)$ denotes the index 2 subgroup of $\FSym(X)$ consisting of all even permutations on $X$.
\begin{align*}
U_p := \langle\;\FAlt(X_n), g_i^p\mid i \in \{2,...,n\}\;\rangle
\end{align*}
\begin{not*}
Let $A\le_fB$ denote that $A$ has finite index in $B$.
\end{not*}
Let $n\in \{3, 4, \ldots\}$. If $p$ is odd, then $U_p$ consists of all elements of $H_n$ whose eventual translation lengths are all multiples of $p$. If $p$ is even, then $U_p$ consists of all elements $u$ of $H_n$ whose eventual translations are all multiples of $p$ and
\begin{align}\label{containingFAlt}
u\prod\limits_{i=2}^n g_i^{t_i(u)} \in \FAlt(X_n)
\end{align}
i.e.\ $\FSym(X_n)\le U_p$ if and only if $p$ is odd. This can be seen by considering, for some $i, j \in \Z_n$, the commutator of $g_i^p$ and $g_j^p$. This will produce $p$ $2$-cycles which will produce an odd permutation if and only if $p$ is odd. If $n=2$, then for all $p \in \N$ all $u \in U_p\le H_2$ will satisfy (\ref{containingFAlt}).
\begin{lem}[Burillo, Cleary, Martino, R{\"o}ver~\cite{Hou2}]\label{lemfactsaboutU} Let $n\in \{2, 3, \ldots\}$.
For every finite index subgroup $U$ of $H_n$, there exists a $p\in 2\N$ with
\begin{align*}
 \FAlt(X_n) = U_p'< U_p \le_f U \le_f H_n
\end{align*}
where $U_p'$ denotes the commutator subgroup of $U_p$.
\end{lem}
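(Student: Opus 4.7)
The plan is to first show that any finite index subgroup of $H_n$ must contain $\FAlt(X_n)$, then to locate a suitable exponent $p$ making $U_p \le U$, and finally to verify the identity $U_p' = \FAlt(X_n)$ via an abelianisation argument with $p$ chosen even. The strict inequality $U_p' < U_p$ is automatic, since $g_2^p \in U_p$ has nontrivial image in $\Z^{n-1}$ and so cannot lie in $\FAlt(X_n)$.

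\textbf{Step 1 ($\FAlt(X_n)\le U$).} The intersection $V := U\cap\FSym(X_n)$ is finite index in $\FSym(X_n)$, and hence its normal core in $\FSym(X_n)$ is a finite index normal subgroup of $\FSym(X_n)$. I would invoke the classical classification that $\FSym(X_n)$ on the countably infinite set $X_n$ has exactly three normal subgroups, namely $\{1\}$, $\FAlt(X_n)$, and $\FSym(X_n)$ itself. Since the normal core must be infinite (its quotient is finite), it is forced to contain $\FAlt(X_n)$, giving $\FAlt(X_n)\le U$.

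\textbf{Step 2 (Producing $p$ with $U_p\le U$).} For each $i\in\{2,\ldots,n\}$, the intersection $\langle g_i\rangle\cap U$ is finite index in $\langle g_i\rangle\cong\Z$, so there is a smallest positive $p_i$ with $g_i^{p_i}\in U$. Set $p := 2\cdot\lcm(p_2,\ldots,p_n)$ (and $p:=2p_2$ when $n=2$). Then $p$ is even and $g_i^p \in U$ for every relevant $i$; combined with Step 1 this yields $U_p = \langle \FAlt(X_n), g_i^p\rangle \le U$.

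\textbf{Step 3 ($U_p\le_f H_n$ and $U_p' = \FAlt(X_n)$).} The image of $U_p$ in $\Z^{n-1}$ under the map from (\ref{equationses}) is exactly $p\Z^{n-1}$, of index $p^{n-1}$, while $U_p\cap\FSym(X_n)$ sits between $\FAlt(X_n)$ and $\FSym(X_n)$ and so has index at most $2$ in $\FSym(X_n)$. Hence $[H_n:U_p]\le 2p^{n-1}<\infty$. For the commutator subgroup I would first observe $U_p'\le\FAlt(X_n)$: since $\FAlt(X_n)\lhd H_n$, the quotient $U_p/\FAlt(X_n)$ is generated by the classes of the $g_i^p$, and the computation already highlighted in the paper shows $[g_i^p,g_j^p]$ is a product of $p$ disjoint $2$-cycles, which is an \emph{even} permutation precisely because $p$ is even. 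Conversely, $\FAlt(X_n) = \bigcup_{m\ge 5} A_m$ is a directed union of non-abelian simple, hence perfect, finite groups, so $\FAlt(X_n)$ is itself perfect and thus $\FAlt(X_n) = \FAlt(X_n)' \le U_p'$.

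\textbf{Main obstacle.} The classification of normal subgroups of $\FSym(X_n)$ used in Step 1 is the only non-elementary input — it ultimately rests on the simplicity of the infinite alternating group. Everything else reduces to the commutator identity $[g_i^p,g_j^p] = $ (product of $p$ transpositions) and to standard index bookkeeping via the short exact sequence (\ref{equationses}). A small case-check is worth recording: for $n=2$ the parity argument in Step 3 is vacuous, because $U_p/\FAlt(X_2)$ has only one generator and is cyclic for every $p$; uniformly insisting on $p$ even costs nothing and gives a single statement covering all $n\ge 2$.
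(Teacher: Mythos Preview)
Your proof is correct and, in Step~2, actually cleaner than the paper's alternative argument. Both proofs begin identically in spirit: the paper shows $\FAlt(X_n)\le U$ by noting that $\FAlt\cap U$ has finite index in the infinite simple group $\FAlt$, which is the same simplicity input you use (you route it through the normal subgroup structure of $\FSym$, but that reduces to simplicity of $\FAlt$ anyway). The real divergence is in locating the exponent: the paper passes to the image $(U)\pi_n\le_f\Z^{n-1}$, picks $d$ with $(d\Z)^{n-1}\le(U)\pi_n$, and then must lift back---for each $k$ it finds $g_k^d\sigma\in U$ with $\sigma\in\FSym$, reduces $\sigma$ modulo $\FAlt$ to a single transposition disjoint from $\Supp(g_k)$, and squares to get $g_k^{2d}\in U$; this forces a separate computation for $n=2$ where no branch is free of $\Supp(g_2)$. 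Your route---intersecting each cyclic $\langle g_i\rangle$ directly with $U$---bypasses the lifting step entirely and works uniformly for all $n\ge2$. You also spell out the identification $U_p'=\FAlt(X_n)$ via perfection of $\FAlt$ and the parity of $[g_i^p,g_j^p]$ for even $p$, which the paper's alternative proof leaves to the preceding discussion and the cited reference.
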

\begin{proof}[Alternative proof] Let $n\in \{2, 3, \ldots\}$ and let $U\le_fH_n$. Thus $\FAlt\cap U\le_f\FAlt$. Since $\FAlt$ is both infinite and simple, $\FAlt\le U$. Let $\pi_n: H_n\rightarrow \Z^{n-1}$, $g\mapsto (t_2(g),\ldots, t_n(g))$. Thus $(U)\pi_n\le_f\Z^{n-1}$ and so there is a number $d \in \N$ such that $(d\Z)^{n-1}\le (U)\pi_n$ ($[(U)\pi_n:\Z^{n-1}]$ is one such value for $d$). This means that for any $k \in \Z_n\setminus\{1\}$ there exists a $u\in U$ such that $t_k(u)=-d$, $t_1(u)=d$, and $t_i(u)=0$ otherwise. Moreover, for each $k \in \Z_n\setminus\{1\}$ there is a $\sigma \in \FSym$ such that $g_k^d\sigma \in U$. First, let $n\in \{3, 4, \ldots\}$. Since $\FAlt\le U$, we may assume that either $\sigma$ is trivial or is a $2$-cycle with disjoint support from $\Supp(g_k)$. Thus $(g_k^d\sigma)^2=g_k^{2d} \in U$. If $n=2$, we may assume that $\sigma$ is either trivial or equal to $((1, s)\;(1, s+1))$ for any $s \in \N$. Now, by direct computation, $g_2^d((1, 1)\;(1, 2))g_2^d((1, d+1)\;(1, d+2))=g_2^{2d}$. Thus, for any $n\in \{2, 3, \ldots\}$,
\[\langle g_2^{2d},\ldots, g_n^{2d}, \FAlt(X_n)\rangle\le U.\]
Hence, if $p:=2d$, then $U_p\le U$.
\end{proof}
\begin{remR} $\{(U_p)\pi_n\mid p \in \N\}$ are the congruence subgroups of $\Z^{n-1}$.
\end{remR}
Now, given $U\le_fH_n$, our strategy for showing that CP($U$) is solvable is as follows. First, we show for all $p \in \N$ that TCP($U_p)$ is solvable. Using Theorem \ref{thmorbitdecidability}, we then obtain that all finite extensions of $U_p$ have solvable conjugacy problem. By the previous lemma, we have that any finite index subgroup $U$ of $H_n$ is a finite extension of some $U_p$ (note that $U_p\unlhd U$ since $U_p\unlhd H_n$). This will show that CP($U$) is solvable.

TCP($U_p$) requires knowledge of $\Aut(U_p)$. From \cite[Prop. 1]{cox2}, we have that any group $G$ for which there exists an infinite set $X$ where $\FAlt(X)\le G\le \Sym(X)$ has $N_{\Sym(X)}(G)\cong \Aut(G)$ by the map $\rho\mapsto \phi_\rho$. By Lemma \ref{lemfactsaboutU} any finite index subgroup of $H_n$ contains $\FAlt(X_n)$. Thus, if $U\le_f H_n$, then $N_{\Sym(X_n)}(U)\cong \Aut(U)$ by the map $\rho\mapsto \phi_\rho$. In fact we may show that a stronger condition holds.

\begin{lem}\label{lem-monolithic} If $1\ne N\unlhd H_n$, then $\FAlt(X_n)\le N$.
\end{lem}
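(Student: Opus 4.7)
The plan is to exploit the fact that $\FAlt(X_n)$ is a simple group (since $X_n$ is countably infinite) and that $\FAlt(X_n) \le H_n$. Because $N \unlhd H_n$, the intersection $N \cap \FAlt(X_n)$ is a normal subgroup of $\FAlt(X_n)$, hence by simplicity it is either trivial or all of $\FAlt(X_n)$. Thus the entire task reduces to exhibiting, from a chosen nontrivial $n \in N$, a single nontrivial even finite permutation that lies in $N$.

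The key step is to produce such an element as a commutator $[n,\tau] = n^{-1}\tau^{-1}n\tau$ with $\tau \in \FSym(X_n)$ a well-chosen transposition. Since $\tau \in H_n$, normality gives $[n,\tau]\in N$; since $\tau$ and its $n$-conjugate $n^{-1}\tau n$ are both transpositions, their product lies in $\FSym(X_n)$ and is automatically even, so it lies in $\FAlt(X_n)$. The only thing left is to arrange that $[n,\tau] \neq 1$, i.e.\ that the two transpositions $\tau$ and $n^{-1}\tau n$ are distinct.

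The construction of $\tau$ is the one genuine point requiring care. Since $n \ne 1$, choose $x \in X_n$ with $(x)n \ne x$; then, because $X_n$ is infinite, pick $y \in X_n \setminus \{x, (x)n, (x)n^{-1}\}$ and set $\tau := (x\; y)$. Under the right-action convention of the paper, $n^{-1}\tau n$ is the transposition $((x)n\;(y)n)$, and the conditions on $y$ force $\{(x)n,(y)n\} \ne \{x,y\}$, so the two transpositions are distinct and $[n,\tau]$ is a nontrivial element of $\FAlt(X_n) \cap N$.

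I do not anticipate a serious obstacle; the only subtlety is the case analysis for the choice of $y$ — in particular handling the possibility that $x$ lies in a $2$-cycle of $n$ (so $(x)n = (x)n^{-1}$), which is why three points must be excluded rather than two. Once this is done, simplicity of $\FAlt(X_n)$ closes the argument immediately.
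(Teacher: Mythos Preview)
Your proof is correct, and it takes a genuinely different route from the paper. Both arguments open with the same reduction via simplicity of $\FAlt(X_n)$: since $N\cap\FAlt(X_n)\unlhd\FAlt(X_n)$, it suffices to show $N\cap\FAlt(X_n)\ne 1$. From there the paper argues by contradiction: assuming $N\cap\FAlt(X_n)=1$, it invokes the short exact sequence $1\to\FSym(X_n)\to H_n\to\Z^{n-1}\to 1$ to get $[N,N]\le\FSym(X_n)$, forces $[N,N]=1$, and then argues (somewhat informally) that a nontrivial abelian subgroup cannot be normalised by $\FAlt(X_n)$. You instead construct a nontrivial element of $N\cap\FAlt(X_n)$ directly, as the commutator of a nontrivial element of $N$ with a well-chosen transposition in $\FSym(X_n)\le H_n$. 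Your argument is more elementary --- it never uses the abelianisation of $H_n$ --- and in fact proves the stronger statement that every nontrivial normal subgroup of any group $G$ with $\FSym(X)\le G\le\Sym(X)$ contains $\FAlt(X)$.

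Two minor remarks. First, your use of $n$ for the chosen element of $N$ collides with the Houghton index; rename it. Second, the exclusion $y\ne (x)n^{-1}$ is harmless but unnecessary: if $\{(x)n,(y)n\}=\{x,y\}$ then already $(x)n\in\{x,y\}$, and your conditions $(x)n\ne x$ and $y\ne (x)n$ rule this out on their own.
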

\begin{proof} We have that $N \cap \FAlt(X_n) \unlhd H_n$. Since $\FAlt(X_n)$ is simple, the only way for our claim to be false is if $N \cap \FAlt(X_n)$ were trivial. Now, $N\le H_n$, and so $[N,N]\le \FSym(X_n)$. Thus $[N,N]$ must be trivial, and so $N$ must be abelian. But the condition for elements $\alpha, \beta \in \Sym(X_n)$ to commute (that, when written in disjoint cycle notation, either a power of a cycle in $\alpha$ is a power of a cycle in $\beta$ or the cycle in $\alpha$ has support outside of $\Supp(\beta)$) is not preserved under conjugation by $\FAlt(X_n)$, and so cannot be preserved under conjugation by $H_n$ i.e.\ $N$ is not normal in $H_n$, a contradiction.
\end{proof}

\begin{remR} It follows that all Houghton groups are monolithic: each has a unique minimal normal subgroup which is contained in every non-trivial normal subgroup. The unique minimal normal subgroup in each case will be $\FAlt(X_n)$.
\end{remR}
We now introduce notation to help describe any finite index subgroup of $H_n$ (where $n\in \{2, 3, \ldots\}$).
\begin{not*} For each $i \in \Z_n$, let $T_i(U):=\min\{t_i(u)\mid u \in U$ and $t_i(u)>0\}$. Furthermore for all $k \in \Z_n$, let $T^k(U):=\sum_{i=1}^kT_i(U)$ and let $T^0(U):=0$.
\end{not*}
We will now introduce a bijection $\phi_U: X_n\rightarrow X_{T^n(U)}$. This bijection will induce an isomorphism $\hat{\phi}_U: \Sym(X_n)\rightarrow \Sym(X_{T^n(U)})$ which restricts to an isomorphism 
\[N_{\Sym(X_n)}(U)\rightarrow N_{\Sym(X_{T^n(U)})}((U)\hat{\phi}_U).\]
Our bijection $\phi_U$  will send the $i$\ts{th} branch of $X_n$ to $T_i(U)$ branches in $X_{T^n(U)}$. For simplicity let $g_1:=g_2^{-1}$. Now, for any $i \in \Z_n$ and $d \in \N$,
\[X_{i, d}(g_i^{T_i(U)})=\{(i,m)\mid m\equiv d \bmod{|t_i(g_i^{T_i(U)})|}\}\]
where $|t_i(g_i^{T_i(U)})|=T_i(U)$ by the definition of $g_i$.

Thus the $i$\ts{th} branch of $X_n$ may be partitioned into $T_i(U)$ parts:
\[X_{i, 1}(g_i^{T_i(U)})\sqcup X_{i, 2}(g_i^{T_i(U)})\sqcup \ldots\sqcup X_{i, T_i(U)}(g_i^{T_i(U)}).\]
We will now define the bijection $\phi_U$ by describing the image under $\phi_U$ of all points in each set $X_{i, d}(g_i^{T_i(U)})$ where $i \in \Z_n$ and $d \in \Z_{T_i(U)}$. Let $(i,m) \in X_{i,d}(g_i^{T_i(U)})$. Then \[((i,m))\phi_U:=\left(T^{i-1}(U)+d,\;\frac{m-d}{T_i(U)}+1\right)\]
i.e.\ $\phi_U$ sends, for all $i \in \Z_n$ and $d \in \Z_{T_i(U)}$, the ordered points of $X_{i, d}(g_i^{T_i(U)})$ to the ordered points of the $(T^{i-1}(U)+d)$\ts{th} branch of $X_{T^n(U)}$. An example of this bijection with $n=T_1(U)=T_2(U)=T_3(U)=3$ is given below.

\begin{figure}[h]
        \captionsetup[subfigure]{aboveskip=4pt,belowskip=-4pt}
        \begin{subfigure}[h]{0.3\textwidth}
\begin{tikzpicture}[scale=0.4]
\foreach \x in {90,210,330}
{
\draw (\x:5cm)  -- (\x:0.5cm);
\filldraw (\x:4.5cm) circle (2pt)-- (\x:0.5cm) circle (2pt);
\filldraw (\x:4cm) circle (2pt)-- (\x:1cm) circle (2pt);
\filldraw (\x:3.5cm) circle (2pt)-- (\x:1.5cm) circle (2pt);
\filldraw (\x:3cm) circle (2pt)-- (\x:2.5cm) circle (2pt);
\filldraw (\x:2cm) circle (2pt)-- (\x:0.5cm) circle (2pt);
\draw[draw=red, thick, rotate=-30] (0.2,-0.3) rectangle (1.8,0.3);
\draw[draw=orange, thick, rotate=-30] (1.8,0.3) rectangle (3.2,-0.3);
\draw[draw=yellow, thick, rotate=-30] (3.25,-0.3) rectangle (4.7,0.3);
}
\end{tikzpicture}
   \caption*{The set $X_n$}        \end{subfigure}\hspace{0.25cm}
   \begin{subfigure}[h]{0.3\textwidth}
\begin{tikzpicture}[scale=0.4]
\foreach \x in {80,90,100,200,210,220,320,330,340}
{
\draw (\x:5cm)  -- (\x:0.5cm);
\filldraw (\x:3.7cm) circle (2pt)-- (\x:1.2cm) circle (2pt);
\filldraw (\x:3.7cm) circle (2pt)-- (\x:2.5cm) circle (2pt);
\draw[draw=red, thick, rotate=60] (-0.45,-1.35) rectangle (0.5,-1);
\draw[draw=orange, thick, rotate=60] (-0.7,-2.3) rectangle (0.7,-2.7);
\draw[draw=yellow, thick, rotate=60] (-0.9,-3.45) rectangle (0.9,-3.9);
}
\end{tikzpicture}
\caption*{The set $X_{T^n(U)}$}\end{subfigure}
\caption{Our bijection between $X_n$ and $X_{T^n(U)}$, which can be\\ visualised by rotating the rectangles 90 degrees clockwise.}\label{figurethemap}
\end{figure}
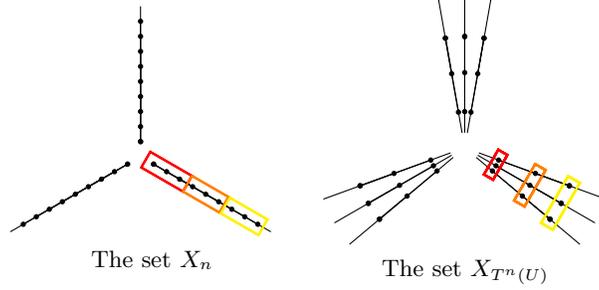

We now describe the image of $U$ under $\hat{\phi}_U$. First, $\hat{\phi}_U$ preserves cycle type. Thus $\FAlt(X_{T^n(U)})\le (U)\hat{\phi}_U$. Moreover $\FSym(X_{T^n(U)})\le (U)\hat{\phi}_U$ if and only if $\FSym(X_n)\le U$. The following will be useful to describe $(U)\hat{\phi}_U$.
\begin{not*}
For any $n \in \N$ and any $i \in \Z_n$, let $R_i:=i \times \N$, the $i$\ts{th} branch of $X_n$ or $X_{T^n(U)}$, and $Q_i:=(R_i)\phi_U$, so that $Q_i$ consists of $T_i(U)$ branches of $X_{T^n(U)}$.
\end{not*}
\begin{lem}\label{factsaboutphihat1} Let $n\in \{2, 3, \ldots\}$ and $U\le_fH_n$. If $u \in U$, then $(u)\hat{\phi}_U\in H_{T^n(U)}$. Moreover, for each $i \in \Z_{T^n(U)}$ there exists a $g \in (U)\hat{\phi}_U$ such that $t_i(g)=1$.
\end{lem}
\begin{proof} Let $n\in \{2, 3, \ldots\}$ and $U\le_fH_n$. Since $\hat{\phi}_U$ preserves cycle type, outside of a finite set, $(u)\hat{\phi}_U$ consists of fixed points and infinite cycles. If $i\in \Z_n$ and $m>z_i(u)+|t_i(u)|$, let  $(i', m')=(i,m)\phi_U$ and note, since $T_i(U)\big|t_i(u)$, that $(i',m')(u)\hat{\phi}_U=(i',m'+t_i(u)/T_i(U))$. Hence $(u)\hat{\phi}_U\in H_{T^n(U)}$. Now, from the definition of $T_i(U)$, there exists a $g \in (U)\hat{\phi}_U$ such that $t_i(g)=1$.
\end{proof}
Using the above notation we have, for any $g \in (U)\hat{\phi}_U$, that
\begin{align}\label{structureofU}
\text{if}\;t_i(g)=d,\;\text{then}\;t_j(g)=d\;\text{for all}\;j\;\text{such that}\;R_j\subseteq Q_i
\end{align}
i.e.\ for any $u \in U$ and any $i \in \Z_n$, the eventual translation lengths of $(u)\hat{\phi}$ for the branches in $Q_i$ must be the same.
\begin{lem}\label{factsaboutphihat} Let $n\in \{2, 3, \ldots\}$ and $U\le_fH_n$. Then $N_{\Sym(X_{T^n(U)})}((U)\hat{\phi}_U)\le H_{T^n(U)}\rtimes R$ where $R\le S_{T^n(U)}$ consists of isometric permutations of the rays of $X_{T^n(U)}$. Thus if $\rho \in N_{\Sym(X_n)}(U)$, then $(\rho)\hat{\phi}_U\in H_{T^n(U)}\rtimes S_{T^n(U)}$. 
\end{lem}
\begin{proof}
Let $G:=N_{\Sym(X_{T^n(U)})}((U)\hat{\phi}_U)$. Describing $G$ will be simpler than describing $N_{\Sym(X_n)}(U)$. We will do this in three stages: we first show, for any $\rho \in G$, that the image under $\rho$ of each ray of $X_{T^n(U)}$ is almost equal to a ray of $X_{T^n(U)}$; secondly we show that $\rho\sigma_\rho^{-1} \in H_{T^n(U)}$, meaning that $G\le H_{T^n(U)}\rtimes S_{T^n(U)}$; and finally we will describe a subgroup of $R\le S_{T^n(U)}$ such that $\{\sigma_\rho\mid\rho \in G\}\le R$.

Let $i,j\in \Z_{T^n(U)}$ and $\rho \in G$. Consider if $(R_i)\rho$ and $R_j$ have infinite intersection but are not almost equal. Either $(R_i)\rho$ has infinite intersection with $R_j$ and $R_{j'}$ where $j'\ne j$, or there is an $i'\ne i$ such that $(R_{i'})\rho$ is almost equal to an infinite subset of $R_j$. For the first case, let $g \in (U)\hat{\phi}_U$ be chosen so that $t_i(g)=1$. Thus $g$ has an infinite cycle containing $\{(i,m)\mid m\ge z_i(g)\}$. Let $(i',m')=(i,z_i(g))\rho$. Then $\{(i',m')(\rho^{-1}g\rho)^d\mid d\in \N\}=\{(i',m')\rho^{-1}g^d\rho\mid d\in \N\}=\{(i,m)\rho\mid m>z_i(g)\}$ which has infinite intersection with $R_j$ and $R_{j'}$. But from the description of orbits of $H_n$ in \cite{ConjHou}, $\rho^{-1}g\rho \not\in H_{T^n(U)}$ and so $(\rho^{-1}g\rho)\hat{\phi}_U^{-1} \not\in U$ i.e.\ $\rho \not\in G$. The second case reduces to the first, since it implies that $(R_j)\rho^{-1}$ has infinite intersection with $R_i$ and $R_{i'}$. Hence if $(R_i)\rho$ and $R_j$ have infinite intersection (where $i,j \in \Z_{T^n(U)}$), then $(R_i)\rho$ and $R_j$ are almost equal.

Let $\rho \in G$ and let $\omega:=\rho\sigma_\rho^{-1} \in \Sym$ so that $\omega$ sends almost all of each branch of $X_{T^n(U)}$ to itself. Since, for each branch, $\omega$ preserves the number of infinite orbits induced by $g$, we have for all $k \in \Z_{T^n(U)}$ and all $g \in (U)\hat{\phi}_U$ that  $t_k(\omega^{-1}g\omega)=t_k(g)$. Fix an $i \in \Z_{T^n(U)}$ and choose $g \in H_{T^n(U)}$ so that $t_i(g)=1$. Note that $g^{-1}\omega g\omega^{-1} \in \FSym$. Thus there is a $d \in \N$ such that, for all $m>d$, $(i,m)g^{-1}\omega g\omega^{-1}=(i,m)$.  We may now assume for some $m'>d$ that $(i, m')\omega=(i, m'+s)$, where $s\in \N$. This is because $\omega$ sends only finitely many points of $R_i$ to another branch and to ensure the positivity of $s$ we may replace $\omega$ with $\omega^{-1}$. Hence
\[(i,m'+1)g^{-1}\omega g\omega^{-1}=(i,m')\omega g\omega^{-1}=(i,m'+s)g\omega^{-1}=(i,m'+s+1)\omega^{-1}.\]
But, from our assumptions, $(i,m'+1)g^{-1}\omega g\omega^{-1}=(i,m'+1)$. Hence
\[\omega: (i,m'+1)\mapsto (i,m'+s+1)\]
i.e.\ $\omega: (i,m)\mapsto (i,m+s)$ for all $m\ge m'$. Running this argument for each $k \in \Z_{T^n(U)}$ we have  for any $\rho \in G$ that $\rho\sigma_\rho^{-1} \in H_{T^n(U)}$.

We now describe necessary conditions on the branches of $X_{T^n(U)}$ for there to be a $\rho \in G$ that permutes those branches. Let us assume that $\rho \in G$ and that $(R_j)\rho$ is almost equal to $R_{j'}$, where $R_j\subseteq Q_k$ and $R_{j'}\subseteq Q_{k'}$. If $k=k'$ then for all $g \in (U)\hat{\phi}_U$, $t_j(g)=t_{j'}(g)$; hence all  permutations of the branches in $Q_k$ may lie in $G$. If $k\ne k'$ then let $g \in U$ be such that $t_k(g)>0$, $t_{k'}(g)<0$, and $t_i(g)=0$ for all branches $i$ in $X_n\setminus (R_k\cup R_{k'})$. Such an element exists by Lemma \ref{lemfactsaboutU}: there is a $p \in \N$ such that $U_p \le U$. Let $h:=(g)\hat{\phi}$. For a ray $j'$ in $Q_{k'}\subseteq X_{T^n(U)}$ we have that $t_{j'}(\rho^{-1}h\rho)>0$ and so, by (\ref{structureofU}), if $\rho^{-1}h\rho \in (U)\hat{\phi}_U$ then we must have for all branches $i'$ in $Q_{k'}$ that $t_{i'}(\rho^{-1}h\rho)>0$. From our choice of $g$, we may conclude that $Q_k$ cannot contain fewer branches than  $Q_{k'}$. Similarly $t_j(\rho h\rho^{-1})<0$ and so for all rays $i$ in $Q_{k}$, $t_{i}(\rho h\rho^{-1})<0$ meaning that $Q_{k'}$ cannot contain fewer branches than $Q_k$. Hence a necessary condition on $j, j' \in \Z_{T^n(U)}$ for there to be a $\rho \in G$ such that $(R_j)\rho$ is almost equal to $R_{j'}$ is that $Q_k\supseteq R_j$ and $Q_{k'}\supseteq R_{j'}$ contain the same number of branches. This condition is equivalent to the statement that $T_k(U)=T_{k'}(U)$.
\end{proof}
\begin{rem}\label{R} We can more precisely describe the subgroup $R$ from the previous lemma. We have that $R=(\bigoplus_{i=1}^nS_{T_i(U)})\rtimes A$, where $A\le S_n$ corresponds to permuting the factors of the summand that have the same size. More explicitly, if $\sigma\in S_n$ and $\sigma=\sigma_1\ldots\sigma_d$ in disjoint cycle notation, then $\sigma\in A$ if and only if, for every $i\in \Z_d$ and $k, k' \in \Supp(\sigma_i)$, $T_k(U)=T_{k'}(U)$.
\end{rem}
We now consider the particular case of $U_m:=\langle \FAlt(X_n), g_i^m\mid i \in \Z_n\rangle\le_fH_n$ (where $g_i$ is our standard generator of $H_n$ with $t_1(g)=1$ and $t_i(g)=-1$). In this case the sets $Q_i$, $i\in \Z_n$, all consist of $m$ rays. Thus Lemma \ref{factsaboutphihat} states that
\begin{align}\label{showequal}
N_{\Sym(X_{mn})}((U_m)\hat{\phi}_{U_m})\le H_{mn}\rtimes (S_m\wr S_n)
\end{align}
where $S_m\wr S_n=(\bigoplus_{i=1}^nS_m)\rtimes S_n$. In fact (\ref{showequal}) is an equality. Let $\rho \in H_{mn}\rtimes (S_m\wr S_n)$. Then conjugation by $\rho$ preserves cycle type. Thus $\rho^{-1}(\FAlt(X_{mn}))\rho=\FAlt(X_{mn})$. Given $g_i^m \in U_m$, we have that $(g_i^m)\hat{\phi}_{U_m}$ is a product of $m$ infinite cycles, each with support equal to two branches of $X_{mn}$. Since conjugation by any $\omega\in H_{mn}$ preserves cycle type and sends almost all of each branch of $X_{mn}$ to itself, we have that $\omega^{-1}((g_i^m)\hat{\phi}_{U_m})\omega \in (U_m)\hat{\phi}_{U_m}$. Elements of the head of $S_m\wr S_n$ send $(g_i^m)\hat{\phi}_{U_m}$ to $(g_j^mg_k^{-m})\hat{\phi}_{U_m}$ for some $j, k \in \Z_n$. We now consider the preimage, under $\hat{\phi}_{U_m}$, of elements of the base of $S_m\wr S_n$. 

\begin{not*} Let $Y_{i, 0}(U):=\{(i, m)\mid 1\le m\le T_i(U)\}$ and, for any $s \in \N$, let $Y_{i, s}(U):=\{(i,m)\mid sT_i(U)+1\le m\le (s+1)T_i(U)\}=Y_{i, 0}(U)g_i^{-sT_i(U)}$. Thus $R_i=\bigsqcup\limits_{s=0}^\infty Y_{i, s}(U)=\bigsqcup\limits_{s=0}^\infty Y_{i, 0}(U)g_i^{-sT_i(U)}$.
\end{not*}
\begin{defn} Given any  $i \in \Z_n$ and $\sigma \in \FSym(X_n)$ with $\Supp(\sigma)\subseteq Y_{i, 0}(U))$, let $u_{\sigma, i}$ be the element of $\Sym(X_n)$ such that $\Supp(u_{\sigma, i})\subseteq R_i$ and, for every $s \in \N\cup\{0\}$, $u_{\sigma, i}\big|Y_{i, s}(U)=g_i^{sT_i(U)}\sigma g_i^{-sT_i(U)}$ i.e.\ let $u_{\sigma, i}$ induce the permutation $\sigma$ on every set $Y_{i, s}(U)$.
\end{defn}
The preimage of the base of $S_m\wr S_n$ is therefore $\{u_{\sigma, i}\mid i \in \Z_n, \sigma \in \FSym(Y_{i,0}(U_m))\}$. Let $j\in \Z_n$ and $\sigma \in \FSym(Y_{j,0}(U_m))$. We have that conjugation by $u_{\sigma, j}$ sends $g_i^m$ to an element of $H_n$ and, in particular, $t_1(u_{\sigma, j}^{-1}g_i^mu_{\sigma, j})=-t_i(u_{\sigma, j}^{-1}g_i^mu_{\sigma, j})=m$. Now $u_{\sigma, j}^{-1}g_i^mu_{\sigma, j}\in U_m$ since conjugation by elements of $\Sym$ preserves cycle type. Hence all elements of $H_{mn}\rtimes (S_m\wr S_n)$ normalise $(U_m)\hat{\phi}_{U_m}$.

\begin{prop}\label{prop-structureofUp} Let $n\in \{2, 3, \ldots\}$ and $U\le_fH_n$. Then there exists an $m \in \N$ such that $U_m\le U$ and $N_{\Sym(X_n)}(U)\le N_{\Sym(X_n)}(U_m)$. Importantly this implies that $U_m$ is characteristic in $U$.
\end{prop}
\begin{proof}  Let $n\in \{2, 3, \ldots\}$ and $U\le_fH_n$. Lemma \ref{lemfactsaboutU} states that there exists an $m \in 2\N$ such that $U_m\le_fU$. Let $G_m:=N_{\Sym(X_{mn})}((U_m)\hat{\phi}_{U_m})$, which, from the above, equals $H_{mn}\rtimes (S_m\wr S_n)$. We wish to show that $N_{\Sym(X_n)}(U)\le N_{\Sym(X_n)}(U_m)$. From Lemma \ref{factsaboutphihat}, $N_{\Sym(X_{T^n(U)})}((U)\hat{\phi})\le H_{T^n(U)}\rtimes R$ where $R\le S_{T^n(U)}$. It is therefore sufficient to show that $(H_{T^n(U)}\rtimes R)\hat{\phi}_U^{-1}\le (G_m)\hat{\phi}_{U_m}^{-1}$.

First consider $(R)\hat{\phi}_U^{-1}$. By Remark \ref{R}, $R=(\bigoplus_{i=1}^nS_{T_i(U)})\rtimes A$. But $(A)\hat{\phi}_U^{-1}\le S_n$ corresponds to the isometric permutations of the rays of $X_n$, all of which lie in $(G_m)\hat{\phi}_{U_m}^{-1}$. Now $(\bigoplus_{i=1}^nS_{T_i(U)})\hat{\phi}_U^{-1}$ consists of $\{u_{\sigma, i}\mid i \in \Z_n, \sigma \in \FSym(Y_{i,0}(U))\}$. But since $T_i(U)\big|m$ for every $i\in \Z_n$, this is a subset of $\{u_{\sigma, i}\mid i \in \Z_n, \sigma \in \FSym(Y_{i,0}(U_m))\}\le (G_m)\hat{\phi}_{U_m}^{-1}$.

Finally we consider $(H_{T^n(U)})\hat{\phi}_U^{-1}$. Let $g_j\in H_{T^n(U)}$. Then $t_1(g_j)=1$ and $t_j(g_j)=-1$ where $R_j\subseteq Q_k$. We first consider the image of $g_j$ under $\hat{\phi}_U^{-1}$. Since $(R)\hat{\phi}_U^{-1}\le (G_m)\hat{\phi}_{U_m}^{-1}$, we may assume that $j$ is the lowest numbered branch in $Q_k$, meaning that $\Supp((g_j)\hat{\phi}_U^{-1})=(\Supp(g_j))\phi_U^{-1}=X_{1, 1}(g_j^{T_1(U)})\sqcup X_{k, 1}(g_j^{T_k(U)})$. We now consider the image of this element under $\hat{\phi}_{U_m}$. For every $d\in\N$ let $i_d:=(d-1)\cdot T_1(U)+1$, let $k_d:=(d-1)\cdot T_k(U)+(k-1)m+1$ and, for each $i\in \Z_n$, let $c_i$ denote the constant such that $m=c_iT_i(U)$. Then
\begin{align*}
(X_{1, 1}(g_k^{T_1(U)}))\phi_{U_m}=\left(\bigsqcup_{d=1}^{c_1}X_{1, i_d}(g_k^{c_1T_1(U)})\right)\phi_{U_m}=\bigsqcup_{d=1}^{c_1}R_{i_d}
\end{align*}
and
\begin{align*}
(X_{k, 1}(g_k^{T_k(U)}))\phi_{U_m}=\left(\bigsqcup_{d=1}^{c_k}X_{1, k_d}(g_k^{c_kT_k(U)})\right)\phi_{U_m}=\bigsqcup_{d=1}^{c_k}R_{k_d}
\end{align*}
where $c_1T_1(U)=c_kT_k(U)=m$. Thus $\Supp((g_j)\hat{\phi}_U^{-1}\hat{\phi}_{U_m})=(\bigsqcup_{d=1}^{c_1}R_{i_d})\sqcup(\bigsqcup_{d=1}^{c_k}R_{k_d})$. Consider the element $g\in G_m$ that: has one infinite orbit; preserves the colexicographic order (inherited from this ordering on $X_{mn}=\{(i,m)\mid i \in \Z_{mn}, m\in\N\}$) on $\bigsqcup_{d=1}^{c_1}R_{i_d}$ and on $\bigsqcup_{d=1}^{c_k}R_{k_d}$; sends $(k_1, 1)$ to $(1, 1)$; and fixes all other points of $X_{mn}$. We note that $g$ and $(g_j)\hat{\phi}_U^{-1}\hat{\phi}_{U_m}$ have the same supports, and considering their actions on $X_{mn}$ we see that $g=(g_j)\hat{\phi}_U^{-1}\hat{\phi}_{U_m}$.
\end{proof}
\subsection{Conjugacy for groups commensurable to $H_n$}\label{commconj}
In Section \ref{appendixprop} we show that there exists an algorithm which, for any $n \in \{2, 3, \ldots\}$, $p \in \N$, and $H_{np}$-conjugated $a, b \in H_{np}\rtimes S_{np}$, decides whether $a$ and $b$ are $(U_p)\hat{\phi}_{U_p}$-conjugated.
\begin{prop}\label{prop-automorphismgrouplattice} Let $n\in \{2, 3, \ldots\}, p \in 2\N$ and $U_p\le H_n$. Then TCP($U_p$) is solvable.
\end{prop}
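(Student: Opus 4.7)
The plan is to mimic the reformulation of TCP$(H_n)$ from Section~\ref{Sectionequivproblem}, and then transfer the problem into $H_{np}$ via the embedding of Proposition~\ref{prop-structureofUp}, reducing to the algorithm promised in the appendix.

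Since $\FAlt(X_n)\le U_p$ by Lemma~\ref{lemfactsaboutU}, the result \cite[Prop.~1]{cox2} applies and gives $\Aut(U_p)\cong N_{\Sym(X_n)}(U_p)$ via $\rho\mapsto\phi_\rho$. Hence any $\phi\in\Aut(U_p)$ is realised as $\phi_c$ for a computable $c\in N_{\Sym(X_n)}(U_p)$, and for $a,b\in U_p$ the equation $(x^{-1})\phi_c\cdot a\cdot x=b$ rearranges to $x^{-1}(ca)x=cb$. Thus TCP$(U_p)$ is equivalent to the following problem: given $A,B\in N_{\Sym(X_n)}(U_p)$ lying in the same coset modulo $U_p$ (here $A=ca$ and $B=cb$), decide whether they are conjugate by an element of $U_p$.

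Next, apply the isomorphism coming from Proposition~\ref{prop-structureofUp} (conjugation by the bijection $\phi_{U_p}\colon X_n\to X_{np}$), which identifies
\[
N_{\Sym(X_n)}(U_p)\;\cong\;N_{\Sym(X_{np})}\bigl((U_p)\hat{\phi}_{U_p}\bigr)\;=\;H_{np}\rtimes(S_n\wr S_p)\;\le\;H_{np}\rtimes S_{np},
\]
with $(U_p)\hat{\phi}_{U_p}\le H_{np}$. The problem now reads: given $A',B'\in H_{np}\rtimes S_{np}$ having the same $S_{np}$-component, decide whether they are conjugate by some $x\in(U_p)\hat{\phi}_{U_p}$.

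To carry this out, first apply Theorem~\ref{main2} to $H_{np}$: by the reformulation of Section~\ref{Sectionequivproblem}, that theorem decides whether $A'$ and $B'$ are conjugate by an element of $H_{np}$. If not, no conjugator exists in the smaller group $(U_p)\hat{\phi}_{U_p}$ either, and we output ``no''. Otherwise, we invoke the appendix algorithm, which is designed precisely to decide, for $H_{np}$-conjugate elements of $H_{np}\rtimes S_{np}$, whether a conjugator can be chosen inside $(U_p)\hat{\phi}_{U_p}$. The genuine difficulty lies in this last step: the earlier reductions are formal, but refining an arbitrary $H_{np}$-conjugator down to one lying in the finite-index subgroup $(U_p)\hat{\phi}_{U_p}$ — which in general does not contain all of $\FSym(X_{np})$ — cannot proceed by the straight coset enumeration of Lemma~\ref{lemreducetofsym}, and is what the appendix must handle, using arguments in the spirit of Section~\ref{sectionbound1}.
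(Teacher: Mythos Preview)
Your proposal is correct and follows essentially the same route as the paper: reformulate twisted conjugacy as conjugacy in $N_{\Sym(X_n)}(U_p)$ via \cite[Prop.~1]{cox2}, push through $\hat{\phi}_{U_p}$ into $H_{np}\rtimes S_{np}$ using Proposition~\ref{prop-structureofUp}, apply the TCP$(H_{np})$ algorithm to find an $H_{np}$-conjugator, and then invoke Proposition~\ref{prop-conjinUp} from the appendix to refine it to $(U_p)\hat{\phi}_{U_p}$. Your added remarks on why the last step is the nontrivial one (and why naive coset enumeration fails) are accurate and helpful, but the underlying argument matches the paper's proof.
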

\begin{proof} Our aim is to produce an algorithm which, given $a, b \in U_p$ and $\phi_\rho \in \Aut(U_p)$, decides whether there exists a $u \in U_p$ such that $(u^{-1})\phi_\rho au=b$ i.e.\ $u^{-1}\rho au=\rho b$. Let $\hat{\phi}:=\hat{\phi}_{U_p}$ and let us rephrase our question in $(U_p)\hat{\phi}$:
\begin{align*}
u^{-1}\rho au&=\rho b\\
\Leftrightarrow (u^{-1}\rho au)\hat{\phi}&=(\rho b)\hat{\phi}\\
\Leftrightarrow (u^{-1})\hat{\phi}(\rho a)\hat{\phi}(u)\hat{\phi}&=(\rho b)\hat{\phi}
\end{align*}
where $(\rho a)\hat{\phi}, (\rho b)\hat{\phi} \in H_{np}\rtimes S_{np}$  and $(u)\hat{\phi} \in (U_p)\hat{\phi}\le H_{np}$ from Lemma \ref{factsaboutphihat1} and Lemma \ref{factsaboutphihat}. The algorithm for TCP($H_{np}$) in Section \ref{sectionTCPHnalgorithm} may be used to produce a conjugator $x \in H_{np}$ if one exists. Given such a $x$, Proposition \ref{prop-conjinUp} decides whether there exists a $y \in (U_p)\hat{\phi}$ which conjugates $(\rho a)\hat{\phi}$ to $(\rho b)\hat{\phi}$.
\end{proof}
\begin{prop}\label{prop-Uporbitdecide} Let $n\in \{2, 3, \ldots\}$, $p \in 2\N$, and $U_p\le H_n$. If $E$ is a finite extension of $U_p$, then $A_{U_p\unlhd E}$ is orbit decidable.
\end{prop}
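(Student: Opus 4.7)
The plan is to realise $A_{U_p\unlhd E}$ as a finite extension of $\Inn(U_p)$, explicitly compute a finite transversal, and then reduce orbit decidability to the conjugacy problem in $U_p$, which follows from Proposition \ref{prop-automorphismgrouplattice}. To set things up, observe that since $\FAlt(X_n)\le U_p$ and $\FAlt(X_n)$ has trivial centraliser in $\Sym(X_n)$, the group $U_p$ is centreless. Hence $C_E(U_p)\cap U_p=1$, so $C_E(U_p)$ embeds into the finite quotient $E/U_p$ and is itself finite. Consequently $A_{U_p\unlhd E}\cong E/C_E(U_p)$ is a finite extension of $\Inn(U_p)\cong U_p$ with $[A_{U_p\unlhd E}:\Inn(U_p)]$ dividing $[E:U_p]$.

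By \cite[Prop.~1]{cox2} we have $\Aut(U_p)\cong N_{\Sym(X_n)}(U_p)$, and by Proposition \ref{prop-structureofUp} the latter embeds via $\hat{\phi}_{U_p}$ into $H_{np}\rtimes(S_n\wr S_p)$. Using the algorithmic hypotheses on $E$, each generator $\phi_{e_i}$ of $A_{U_p\unlhd E}$ can be effectively identified with an element $\rho_i\in N_{\Sym(X_n)}(U_p)$. I would then run a breadth-first search on words in the $\rho_i^{\pm 1}$, building a transversal $\{1=\tau_1,\ldots,\tau_k\}$ of $\Inn(U_p)$: whenever a new word $w$ is produced, test whether $\tau_j^{-1}w\in U_p$ for some previously found $\tau_j$, and otherwise add $w$ as a new representative. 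Membership of an element of $N_{\Sym(X_n)}(U_p)$ in $U_p$ is decidable, since Lemma \ref{lem-actioncomputable} lets us read off whether the element is eventually a translation, then check the $p$-divisibility of its translation vector and, when $p$ is even, the parity condition defining $U_p$. Termination is guaranteed by finiteness of the index.

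With the transversal in hand, orbit decidability follows quickly. Since $\Inn(U_p)\le A_{U_p\unlhd E}$, given $a,b\in U_p$ the elements $a$ and $b$ lie in the same $A_{U_p\unlhd E}$-orbit if and only if $\tau_i^{-1}a\tau_i$ is conjugate to $b$ in $U_p$ for some $i$. Each $\tau_i^{-1}a\tau_i$ lies in $U_p$ (as $\tau_i$ normalises $U_p$) and is computable as a permutation of $X_n$, so we may invoke Proposition \ref{prop-automorphismgrouplattice} with the identity automorphism to settle the conjugacy question. The main obstacle is the transversal computation itself: it needs both a termination certificate (from finiteness of the index) and a decision procedure for coset equality (from decidable membership in $U_p$ inside $N_{\Sym(X_n)}(U_p)$). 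Once these are in place, the rest is a routine combination of Propositions \ref{prop-automorphismgrouplattice} and \ref{prop-structureofUp} with the algorithmic hypotheses on $E$.
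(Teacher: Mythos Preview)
Your argument is correct and follows the same broad outline as the paper's --- both exploit that $\Inn(U_p)\le A_{U_p\unlhd E}$ with finite index, and reduce orbit decidability to finitely many conjugacy checks --- but the two reductions land in different groups. The paper transports the whole question through $\hat{\phi}_{U_p}$ into $H_{np}\rtimes S_{np}$, extracts the finite group $E_\sigma$ of isometric ray-permutations coming from $(E)\hat{\phi}$, and for each $\tau\in E_\sigma$ asks whether $(a')\hat{\phi}$ and $\tau\,(b')\hat{\phi}\,\tau^{-1}$ are conjugate by an element of $H_{np}$, invoking Theorem~\ref{main2} (or \cite[Thm.~1.2]{ConjHou}) rather than TCP$(U_p)$. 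You instead remain on the $U_p$ side: you realise the generators of $A_{U_p\unlhd E}$ as explicit permutations in $N_{\Sym(X_n)}(U_p)$, run a coset enumeration against the decidable membership test for $U_p$ to produce a transversal of $\Inn(U_p)$, and then appeal to CP$(U_p)$, i.e.\ the identity case of Proposition~\ref{prop-automorphismgrouplattice}. Your route makes cleaner and more immediate use of the just-established TCP$(U_p)$ and is more explicit about why the finitely many tests exhaust all of $A_{U_p\unlhd E}$; the paper's route bypasses the transversal search by reading the coset data off the $S_{np}$-component and recycles the $H_{np}$-conjugator search already built in Section~\ref{sectionTCPHnalgorithm}. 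One small point worth making explicit in your write-up is how one effectively converts a generator $\phi_{e_i}$, given by its action on the generators of $U_p$, into a concrete element $\rho_i\in N_{\Sym(X_n)}(U_p)$: this can be done by enumerating $(N_{\Sym(X_n)}(U_p))\hat{\phi}_{U_p}\le H_{np}\rtimes S_{np}$ and checking the action on generators using the solvable word problem, but it deserves a sentence.
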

\begin{proof}
Recall that for $A_{U_p\unlhd E}=\{\phi_e\mid e \in E\}$ to be orbit decidable, there must exist an algorithm which decides, given any $a', b' \in U_p$, whether there exists a $\psi \in A_{U_p\unlhd E}$ such that
\begin{align}\label{initialeqn}
  (a')\psi= b'.
\end{align}
Since $\Aut(U_p)\cong N_{\Sym(X_n)}(U_p)$, we may rewrite (\ref{initialeqn}) as searching for an element $\phi_\rho \in \Aut(U_p)$ such that
\begin{align*}
(a')\phi_\rho=b'\;\text{and}\;\phi_\rho \in A_{U_p\unlhd E}
\end{align*}
i.e.\ searching for a $\rho \in \bar{E}$, the image of the natural epimorphism from $A_{U_p\unlhd E}$, such that $\rho^{-1}a'\rho=b'$. Now we rephrase this question using the map $\hat{\phi}:=\hat{\phi}_{U_p}$:
\begin{align*}
(\rho^{-1})\hat{\phi}(a')\hat{\phi}(\rho)\hat{\phi}=(b')\hat{\phi}.
\end{align*}
Let $a:=(a')\hat{\phi}$, $b:=(b')\hat{\phi}$, and $y:=(\rho)\hat{\phi}$. Thus $a, b \in (U_p)\hat{\phi}\le H_{np}$ are known, and $y$ must be chosen to be any element in $(\bar{E})\hat{\phi}$ so that $y^{-1}ay=b$. We have that $(U_p)\hat{\phi}\le_c (\bar{E})\hat{\phi}\le H_{np}\rtimes S_{np}$ for some $c\in \N$. Thus there exist coset representatives $e_2, \ldots,e_c$ and $(\bar{E})\hat{\phi}=\langle (g_2^p)\hat{\phi},\ldots, (g_n^p)\hat{\phi}, \FAlt(X_{mn}), e_2, \ldots, e_c\rangle$. Now, deciding whether $a, b$ are $(\bar{E})\hat{\phi}$-conjugated is equivalent to deciding whether any pair in $\{(a, e_ibe_i^{-1})\mid 2\le i\le c\}$ is $(U_p)\hat{\phi}$-conjugated, by noting that every element in $(\bar{E})\hat{\phi}$ decomposes as $ue_i$ for some $u\in (U_p)\hat{\phi}$ and $2\le i\le c$ and that $u^{-1}au=e_ibe_i^{-1}\Leftrightarrow (ue_i)^{-1}a(ue_i)=b$. Deciding if any pair is $(U_p)\hat{\phi}$-conjugated is possible by first deciding whether any pair is $H_{mn}$-conjugated and then, for each such pair, applying Proposition \ref{prop-conjinUp}.
\end{proof}

\begin{prop}\label{prop-commensurableconjugacy} Let $n\in \{2, 3, \ldots\}$, $p\in 2\N$, and $U_p\unlhd_fG$. Then CP($G$) is solvable.
\end{prop}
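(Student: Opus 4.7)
The plan is a direct application of Theorem \ref{thmorbitdecidability} to the algorithmic short exact sequence
\[1 \longrightarrow U_p \longrightarrow G \longrightarrow G/U_p \longrightarrow 1,\]
where by hypothesis $G/U_p$ is finite. Thus $G$ is a finite extension of $U_p$ in the sense of the earlier sections, and our task reduces to checking the three hypotheses of Theorem \ref{thmorbitdecidability} together with orbit decidability of the action subgroup $A_{U_p \unlhd G} \leqslant \Aut(U_p)$.

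First I would dispatch hypothesis (i): twisted conjugacy for $U_p$ is solvable by Proposition \ref{prop-automorphismgrouplattice}. Hypotheses (ii) and (iii) are then immediate because $F = G/U_p$ is finite: the conjugacy problem in a finite group is decidable by exhaustive check, every cyclic subgroup $\langle f \rangle$ sits with finite index in the (finite) centralizer $C_F(f)$, and a finite set of coset representatives can be listed by direct enumeration. The only remaining ingredient is orbit decidability of $A_{U_p \unlhd G}$, and this is exactly Proposition \ref{prop-Uporbitdecide} applied to the finite extension $G$ of $U_p$. Feeding these four facts into Theorem \ref{thmorbitdecidability} yields solvability of CP($G$).

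I would finish by noting that the data required by the ``algorithmic'' hypothesis of Theorem \ref{thmorbitdecidability} is available in our setting: $U_p$ is finitely generated (being finite index in the finitely generated group $H_n$ for $n \ge 2$, via Lemma \ref{lem-finitegen}), $G$ is finitely generated as a finite extension of $U_p$, and the projection $G \to G/U_p$ is presented by the images of a finite generating set; moreover the action of $G$ on $U_p$ by conjugation is given by words in $N_{\Sym(X_n)}(U_p) \cong \Aut(U_p)$ whose effect on generators is computable via Lemma \ref{lem-actioncomputable}. No step here is a genuine obstacle, since every hard piece of the argument has already been done: the real content was built into Proposition \ref{prop-automorphismgrouplattice} (TCP for $U_p$) and Proposition \ref{prop-Uporbitdecide} (orbit decidability). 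The present proposition is essentially the packaging of those results inside the Bogopolski--Martino--Ventura framework.
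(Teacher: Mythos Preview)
Your proposal is correct and follows essentially the same approach as the paper: apply Theorem \ref{thmorbitdecidability} to the short exact sequence $1\to U_p\to G\to G/U_p\to 1$, invoke Proposition \ref{prop-automorphismgrouplattice} for TCP($U_p$), note that the finite quotient trivially satisfies (ii) and (iii), and use Proposition \ref{prop-Uporbitdecide} for orbit decidability. The paper's proof is terser but identical in content.
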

\begin{proof} We again use \cite[Thm. 3.1]{orbitdecide}. $G$ is a finite extension of $U_p$ by $F$, some finite group. TCP($U_p$) is solvable by Proposition \ref{prop-automorphismgrouplattice}. $A_{U_p\unlhd G}$ is orbit decidable by Proposition \ref{prop-Uporbitdecide}. Hence CP($G$) is solvable.
\end{proof}

Recall that $A$ and $B$ are commensurable if and only if there exist $N_A\cong N_B$ with $N_A$ finite index and normal in $A$ and $N_B$ finite index and normal in $B$. Our aim is to prove Theorem \ref{main5}, that, for any $n\in \{2, 3, \ldots\}$ and any group $G$ commensurable to $H_n$, CP($G$) is solvable.
\begin{proof}[Proof of Theorem \ref{main5}] Fix an $n\in \{2, 3, \ldots\}$ and let $G$ and $H_n$ be commensurable. Then there is a $U\unlhd_f G, H_n$. By Proposition \ref{prop-structureofUp}, there exists an $m \in 2\N$ such that $U_m$ is finite index and characteristic in $U$. It is a well know result that if $A$ is characteristic in $B$ and $B$ is normal in $C$, then $A$ is normal in $C$. Hence $U_m\unlhd G$ and we may apply Proposition \ref{prop-commensurableconjugacy} to obtain that CP($G$) is solvable.
\end{proof}

\section{Further computational results}
\subsection{Computational results regarding centralisers in $H_n$}\label{6.1}
\begin{prop} \label{1} There is an algorithm that, given any $n\in \{2, 3, \ldots\}$ and any $g \in \G$, outputs a finite generating set for $t(C_{H_n}(g))\le \Z^n$.
\end{prop}
In order to prove Proposition \ref{1} we need some notation.
\begin{not*}
Given a subset $A\subseteq \Z_n$ and $g\in\G$, let $t_A(g)=(x_1,\ldots,x_n)\in\Z^n$ be given by $x_i=t_i(g)$ if $i\in A$ and $x_i=0$ otherwise.
\end{not*}
Note that $t_A$ is not a homomorphism: consider $h$, an isometric permutation of the 1st and 2nd branches of $X_n$, and $g_2$, a standard generator of $H_n$. Then $t_1(h^{-1}g_2h)=t_1(g_2^{-1})=-1$ but $t_1(h^{-1})+t_1(g_2)+t_1(h)=0+1+0$. It is a homomorphism from $H_n$ however. Also, given any $g\in \G$ and $A\subseteq \Z_n$, the vector $t_A(g)$ is computable by Lemma \ref{lem-actioncomputable}.
\begin{lem} \label{2} Given $n\in \{2, 3, \ldots\}$ and $g \in \G$,
\[t(C_{H_n}(g))=t_{I^c(g)}(C_{H_n}(g))\oplus t_{I(g)}(C_{H_n}(g)).\]
\end{lem}
\begin{proof} Let $h \in C_{H_n}(g)$ and $i \in I(g)$. From Lemma \ref{lemmacentraliser}, $h_{[i]}\in C_{H_n}(g)$ and $t_j(hh^{-1}_{[i]})=0$ if $j \in \mathfrak{C}_g([i])$ and $t_j(hh^{-1}_{[i]})=t_j(h)$ otherwise. Thus, repeating the argument for all classes of $I(g)$, we construct $h' \in C_{H_n}(g)$ with $t(h')=t_{I^c(g)}(h)$, and thus $t(h(h')^{-1})=t_{I(g)}(h)$. It follows that $t(h)\in \langle t_{I^c(g)}(C_{H_n}(g)), t_{I(g)}(C_{H_n}(g))\rangle$. Clearly $t_{I^c(g)}(C_{H_n}(g))\cap t_{I(g)}(C_{H_n}(g))$ is the trivial element of $\Z^n$ and hence they generate a direct sum.
\end{proof}

\begin{lem} \label{3} Let $n\in \{2, 3, \ldots\}$, $g \in \G$, and $r\in \N$. Then $\sum_{i\in I^c_r(g)}t_i(h)=0$ for all $h \in C_{H_n}(g)$.
\end{lem}
\begin{proof} By Lemma \ref{constantsA}, $t_j(h)=t_i(h)$ for all $j \in [i]_g$. If $j_r^1, \ldots j_r^u$ are representatives of $I^c_r(g)$, then $\sum_{i\in I^c_r(g)}t_i(h)=r\sum_{s=1}^ut_{j_r^s}(h)=r\sum_{s=1}^ut_{j_r^s}(\id)=0$ by Lemma \ref{lem-sametransonIc}.
\end{proof}
\begin{defn}\label{settheta} Given $g \in \G$, let $\Theta(g)$ be the set of all $c_{j_r^d, j_r^{d'}}$ of the statement of Lemma \ref{introducecjj} where $r \in \N$, $j_r^1, \ldots , j_r^u$ are distinct representatives of $I_r^c(g)$, and $d, d' \in \Z_u$ are distinct. Note that since $I^c(g)$ is finite, $\Theta(g)$ is finite.
\end{defn}
\begin{lem} \label{4} Given $n \in \{2, 3, \ldots\}$ and $g \in  \G$, $t_{I^c(g)}(C_{H_n}(g))$ is generated by the image of $\Theta(g)$ under $t$.
\end{lem}
\begin{proof} By Lemma \ref{constantsA} and Lemma \ref{3}, for any $h \in  C_{H_n} (g)$ and for every $r \in  \N$ we have that $\sum_{i\in I_r^c(g)} t_i(h) = 0$ and $t_i(h) = t_k(h)$ if $[k] = [i]$. Thus $t_{I^c(g)}(C_{H_n}(g))$ must be contained in
\[A:=\{(x_1,\ldots,x_n)\in \Z^n \mid x_i =0\;\text{if}\;i\in I(g), \sum_{i\in I_r^c(g)}\!\!\!x_i =0\;\text{for all}\;r\in \N,x_j =x_k\;\text{if}\;[j]=[k]\}.\]
Note that it follows from the definition of $\Theta(g)$ that $t(c) \in  t_{I^c(g)}(C_{H_n} (g))$ for all $c \in  \Theta(g)$. Thus $\langle t(\Theta(g))\rangle \subseteq t_{I^c(g)}(C_{H_n} (g)) \subseteq A$. We claim that $\langle t(\Theta(g))\rangle = A$. Clearly, the lemma follows now from the claim. To see that $t(\Theta(g))$ generates $A$, we argue by induction on the $L_1$-norm of $(x_1,\ldots,x_n) \in  A$ (i.e. $||(x_1,...,x_n)|| = \sum|x_i|)$. Let $y \in  A$. If $||y|| = 0$, then $y \in  \langle t(\Theta(g))\rangle$. Suppose that $y = (x_1,...,x_n)$. If $||y|| > 0$, since $x_i = 0$ for all $i\in I(g)$, there must be $j\in I^c$ with $x_j\ne0$. Assume $x_j <0$. Let $r\in \N$ such that $j\in I_r^c(g)$. Then, by Lemma \ref{3}, there must be $k \in  I_r^c(g)$, $[k]\ne [j]$ with $x_k > 0$. Observe that $t_l(c_{j,k})$ is equal to 1 for $l \in [j]$, is equal to $-1$ for $l \in  [k]$, and zero otherwise. Hence $||yt(c_{k,j})|| = ||y||-2r$, and we conclude by induction that $yt(c_{k,j}) \in  \langle t(\Theta(g))\rangle$ and hence $y \in  \langle t(\Theta(g))\rangle$.
\end{proof}
We now have to find a generating set for $t_{I(g)}(C_{H_n} (g))$.
\begin{lem} \label{6} Let $n\in \{2, 3, \ldots\}$ and $g \in \G$. Then for each $i \in  I(g)$, $t_{\mathfrak{C}_g([i])}(C_{H_n}(g))\le\Z^n$ is infinite cyclic and
\[t_{I(g)}(C_{H_n}(g))=\bigoplus_{i\in R} t_{\mathfrak{C}_g([i])}(C_{H_n}(g)),\]
where $R$ is a set of different representatives of $I(g)/\sim_g$.
\end{lem}
\begin{proof} Let $i \in  I(g)$. Write $g = \omega_g\sigma_g$. Note that $g^{|\sigma_g|} \in  C_{H_n}(g)$ and that $t((g^{|\sigma_g|})_{[i]})$ is non-zero and lies in $t_{\mathfrak{C}_g([i])}(C_{H_n}(g))$. Thus $t_{\mathfrak{C}_g([i])}(C_{H_n}(g))$ is non-trivial.

Let $h \in  C_{H_n}(g)$, such that $t_i(h)$ is positive and minimum. Take $c \in  C_{H_n}(g)$. We will show that $t_{\mathfrak{C}_g([i])}(c) \in \langle t_{\mathfrak{C}_g([i])}(h)\rangle$.

If $t_i(c) = 0$ then, by Lemma \ref{5}, $t_{\mathfrak{C}_g([i])}(c)$ has all its coordinates equal to zero and hence lies in $\langle t_{\mathfrak{C}_g([i])}(h)\rangle$.

Assume that $t_i(c) > 0$, the other is analogous. By the Euclidean algorithm, $t_i(c) = dt_i(h) + r$ with $0 \le r < t_i(h)$. Note that $r = t_i(ch^{-d})$ and $ch^{-d} \in  C_{H_n}(g)$ and thus $r = 0$ by the minimality of $h$. Therefore $t_i(ch^{-d}) = 0$, and, by Lemma \ref{5}, $t_{\mathfrak{C}_g([i])}(ch^{-d})$ has all its coordinates equal to zero, and hence $t_{\mathfrak{C}_g([i])}(c) = t_{\mathfrak{C}_g([i])}(h^d)$.

By Lemma \ref{makedefnwork} and Lemma \ref{lemmacentraliser}, if $h \in  C_{H_n}(g)$, then $h_{[i]} \in  C_{H_n}(g)$ and $t_{\mathfrak{C}_g([i])}(h) = t_{\mathfrak{C}_g([i])}(h_{[i]}) = t(h_{[i]})$. Thus $t_{\mathfrak{C}_g([i])}(C_{H_n} (g))\le t_{I(g)}(C_{H_n} (g))$. Since $t_{I(g)}(h) = \sum_{j\in R}t(h_{[j]})$, we get that $t_{I(g)}(C_{H_n}(g))$ is generated by $\{t_{\mathfrak{C}_g([j])}(C_{H_n}(g)) \mid j \in  R\}$ and clearly if $[j]\not\sim_g [k]$ the cyclic subgroups $t_{\mathfrak{C}_g([j])}(C_{H_n} (g))$ and $t_{\mathfrak{C}_g([k])}(C_{H_n} (g))$ intersect trivially. Now the direct sum follows.
\end{proof}

\begin{lem} \label{7} There is an algorithm that given any $n \in \{2, 3, \ldots\}$, $g \in  \G$, and $i \in  \Z_n$, decides if $i \in  I(g)$ and moreover, if $i \in  I(g)$, it outputs some $h \in  C_{H_n}(g)$ such that $t(h)$ generates $t_{\mathfrak{C}_g([i])}(C_{H_n} (g))$
\end{lem}
\begin{proof} Deciding if $i \in  I(g)$ follows from Lemma \ref{lem-actioncomputable}.

So suppose that $i \in  I(g)$. By Lemma  \ref{6}, $t_{\mathfrak{C}_g([i])}(C_{H_n}(g))$ is infinite cyclic, so there must be $h \in  C_{H_n} (g)$ such that $t_{\mathfrak{C}_g([i])}(h)$ generates $t_{\mathfrak{C}_g([i])}(C_{H_n} (g))$.

From Lemma \ref{lemmacentraliser} if $h \in  C_{H_n}(g)$ then $h_{[i]} \in  C_{H_n} (g)$ and $t_{\mathfrak{C}_g([i])}(h) = t(h_{[i]})$. Thus we can assume that $h \in  C_{H_n} (g)$ has only infinite orbits, its support has finite intersection with the rays $R_j$, $j\not\in  \mathfrak{C}_g([i])$, and $t(h)$ generates $t_{\mathfrak{C}_g([i])}(C_{H_n} (g))$.

Write $g = \omega_g\sigma_g$. Note that $g^{|\sigma_g|} \in  C_{H_n}(g)$ and that $t((g^{|\sigma_g|})_{[i]})$ is non-zero and lies in $t_{\mathfrak{C}_g([i])}(C_{H_n} (g)) = \langle t(h)\rangle$. Let $g^\ast$ denote $(g^{|\sigma_g|})_{[i]}$ and observe that $h \in  C_{H_n} (g)\Rightarrow h\in C_{H_n} (g^\ast)$. Thus $|t_j(h)| \le |t_j(g^{\ast})|$ for $j \in  [i]$, and $t_j(g^\ast)$ is computable from $g$ by Lemma \ref{lem-actioncomputable}.

We will find computable bounds for $z_k(h)$, $k \in  \Z_n$. If $k \in  I^c(g)$, then Lemma \ref{makedefnwork} states that $\Supp(h)=\Supp(g^\ast)$ and so $z_k(h) \le z_k(g^\ast)$ for all $k \in  I^c(g)$. Now consider if $k \in I(g)$. We will show that $z_k(h) \le z_k(g^\ast) + |t_k(g^\ast)|$. Assume, for a contradiction, that $z_k(h)$ is minimal and that $z_k(h)>z_k(g^\ast) + |t_k(g^\ast)|$.  Since $h\in C_{H_n}(g^\ast)$, we have
\begin{align}\label{eqn-centraliserdefn}
(i, m)g^\ast h=(i, m)hg^\ast\;\textrm{for all}\;(i,m) \in X_n.
\end{align}
We may assume that $t_{k}(g^\ast)<0$, since replacing $g^\ast$ with $(g^\ast)^{-1}$ yields a proof for when $t_{k}(g^\ast)>0$. Let $m>z_k(g^\ast)+|t_k(h)|$. Note that
\[(k,m)g^\ast h=(k, m+t_{k}(g^\ast))h\]
and also that
\[(k,m)g^\ast h=(k,m)hg^\ast=(k, m+t_k(h))g^\ast=(k, m+t_{k}(g^\ast)+t_k(h)).\]
Thus $(k, m+t_{k}(g^\ast))h=(k, m+t_{k}(g^\ast)+t_k(h))$, which, since $t_{k}(g^\ast)<0$, contradicts the minimality of $z_k(h)$.

Let $S = \{s \in  H_n : z_k(s) \le z_k(g) + |t_k(g^\ast)|, |t_k(s)| \le |t_k(g^\ast)|, k \in  \Z_n\}$. Note that $S$ must contain $h \in  C_{H_n} (g)$. Note also that $S$ is finite, and with the given restrictions, one can enumerate all elements of $S$. For each $s \in  S$, using the word problem for $\G$ we can decide if $s \in  C_{H_n} (g)$, and using that the functions $t_A$ are computable, we can decide if $t(s)$ is non-zero and equal to $t_{\mathfrak{C}_g([i])}(s)$. Let $S'$ be the subset of $S$ of the elements satisfying the two conditions above. Finally, since $S'$ is finite, we can find $c \in  S'$ such that $t_{\mathfrak{C}_g([i])}(s) \in  \langle t(c)\rangle$ for all $s \in  S'$. Note that since $h \in  S'$ such an $c$ must exist.
\end{proof}

\begin{proof}[Proof of Proposition \ref{1}] From Lemma \ref{lem-actioncomputable} there is an algorithm that computes $I(g)$, $I^c(g)$, and $\mathfrak{C}_g([i])$ for each $i \in  I(g)$. From Lemma \ref{2} $t(C_{H_n} (g)) = t_{I^c(g)}(C_{H_n} (g)) \oplus t_{I(g)}(C_{H_n} (g))$, and we only need algorithms that gives generating sets for each of the direct summands. By Lemma \ref{4}, there is an algorithm that outputs a generating set for $t_{I^c(g)}(C_{H_n}(g))$. By Lemma \ref{7}, there is an algorithm that computes a generator of $t_{\mathfrak{C}_g([i])}(C_{H_n}(g))$ for each $i \in  I(g)$ and by Lemma \ref{6}, those elements generate $t_{I(g)}(C_{H_n} (g))$.
\end{proof}

\subsection{Deciding conjugacy in $(U_p)\hat{\phi}_{U_p}$}\label{appendixprop}
Recall $U_p:=\langle g_2^p,\ldots, g_n^p, \FAlt(X_n)\rangle\le H_n$. From Section, \ref{secficonj} $(U_p)\hat{\phi}_{U_p}\le H_{np}$ and
\[t((U_p)\hat{\phi}_{U_p})=\left\{(\underbrace{a_1\ldots a_1}_p\underbrace{a_2\ldots a_2}_p\ldots\underbrace{a_{n}\ldots a_{n}}_p)^T\,\middle|\,a_1, \ldots, a_{n-1} \in \Z, a_n=-\sum_{i=1}^{n-1}a_i\right\}.\]
When $p$ is even, $\FSym(X_{np})\cap (U_p)\hat{\phi}_{U_p}=\FAlt(X_{np})$. In this section we prove the following (which has been used in Section \ref{commconj} above).
\begin{prop}\label{prop-conjinUp} There is an algorithm which, given any $n\in \{2, 3, \ldots\}$, $p \in2\N$, $a, b \in H_{np}\rtimes S_{np}$ and an $x \in H_{np}$ which conjugates $a$ to $b$, decides whether $a$ and $b$ are $(U_p)\hat{\phi}_{U_p}$-conjugated.
\end{prop}
Lemma \ref{lemcentraliserlemma} stated that if $x\in G$ conjugates $a$ to $b$ then $y\in G$ also conjugates $a$ to $b$ if and only if there exists a $c \in C_{G}(a)$ such that $cx=y$.
\begin{lem}\label{searchforcentraliser} Let $a, b\in H_{np}\rtimes S_{np}$ be conjugate by $x \in H_{np}$. Then $a$ and $b$ are $(U_p)\hat{\phi}_{U_p}$-conjugated if and only if there is a $c \in C_{H_{np}}(a)$ such that $cx \in (U_p)\hat{\phi}_{U_p}$.
\end{lem}
\begin{proof} We apply Lemma \ref{lemcentraliserlemma}. If there is a $c \in C_{H_{np}}(a)$ such that $cx \in (U_p)\hat{\phi}_{U_p}$ then $a$ and $b$ are conjugate by $cx \in (U_p)\hat{\phi}_{U_p}$. If there exists $y \in (U_p)\hat{\phi}_{U_p}$ such that $y^{-1}ay=b$, let $c:=yx^{-1}$. Then $c \in C_{H_{np}}(a)$ and $cx=y \in (U_p)\hat{\phi}_{U_p}$.
\end{proof}
\begin{lem}\label{1.2} Let $a, b \in H_{np}\rtimes S_{np}$ and let $x \in H_{np}$ conjugate $a$ to $b$. Then there is an algorithm that decides whether there exists $c\in C_{H_{np}}(a)$ such that $t(cx)\in t((U_p)\hat{\phi}_{U_p})$, and outputs such an element if one exists.
\end{lem}
\begin{proof}
Let $\{\Vec{\delta}_1, \ldots, \Vec{\delta}_e\}$ denote a finite generating set of $t(C_{H_{np}}(a))$, which is computable by Proposition \ref{1}. Deciding whether there is a $c \in C_{H_{np}}(a)$ such that $t(cx) \in t((U_p)\hat{\phi}_{U_p})$ is equivalent to finding powers $\alpha_i$ of the generators $\Vec{\delta}_i \in \Z^n$ such that
\begin{align*}
t(x)+\sum\limits_{i=1}^e\alpha_i\Vec{\delta}_i\in t((U_p)\hat{\phi}_{U_p}).
\end{align*}
Hence we must decide whether there are constants $\{a_1, \ldots, a_{n-1}\}$ and $\{\alpha_1, \ldots, \alpha_e\}$ such that
\begin{align}\label{eqntobesolved2}
t(x)+\sum\limits_{i=1}^e\alpha_i\Vec{\delta}_i=(a_1\ldots a_1\;a_2\ldots a_2 \ldots\ldots a_{n} \ldots a_{n})^T,\;\text{where}\;a_n:=-\sum\limits_{i=1}^{n-1}a_i.
\end{align}
Viewing this as $np$ linear equations, this system of equations has a solution if and only if there is an element $c \in C_{H_{np}}(a)$ such that $t(cx) \in t((U_p)\hat{\phi}_{U_p})$. By writing these equations as a matrix equation we may compute the Smith normal form to decide if the equations have an integer solution (see, for example, \cite{linearequations}) and compute one should one exist (either from the reference or by enumerating all possible inputs). Definition \ref{settheta} and Lemma \ref{7} then provide suitable preimages for the elements $\{\Vec{\delta}_1, \ldots, \Vec{\delta}_e\}$ in order to output a viable $c$.
\end{proof}
\begin{lem} \label{1.3} Let $n\in \{2, 3, \ldots\}$ and $g \in \G$. Then it is algorithmically decidable whether $C_{H_n}(g)\cap (\FSym(X_n)\setminus\FAlt(X_n))$ is empty. A necessary condition for it to be empty is that $I^c(g)=\emptyset$.
\end{lem}
\begin{proof}
We use Lemma \ref{lem-actioncomputable} to compute $I^c(g)$. If there exists $j\in I^c(g)$, either $|[j]_g|$ is odd or even. If $|[j]_g|$ is even, then we have that the first $|[j]_g|$-cycle on this branch is a finite order element of the centraliser that lies in $\FSym(X_n)\setminus \FAlt(X_n)$. If $|[j]_g|$ is odd, then the element that permutes only the first two $|[j]_g|$-cycles of the branches $[j]_g$ provides an element of the centraliser in $\FSym(X_n)\setminus \FAlt(X_n)$. Hence $C_{H_n}(g)\cap \big(\FSym(X_n)\setminus \FAlt(X_n)\big)$ is non-empty whenever $g$ has an even length orbit or $g$ has two or more orbits of length $m$ for some odd number $m$.  Since one of these is satisfied if $I^c(g)$ is non-empty, assume that $I(g)=\Z_n$. From Section \ref{sectionorbits}, this implies that all of the finite orbits of $g$ lie inside a finite subset of $X_n$, which is computable by Lemma \ref{lem-actioncomputable}. Searching for orbits of $g$ of the appropriate length is therefore also possible in this case.
\end{proof}

\begin{defn} Let $\textrm{sgn}_X$: $\FSym(X)\rightarrow \{1, -1\}$ be the sign function for $\FSym(X)$, so that the preimage of $1$ is $\FAlt(X)$. For any $g\in H_n$, let $f_g:=g(\prod_{i=2}^ng_i^{t_i(g)})$. Note that $t(f_g)=\Vec{0}$ and so $f_g \in \FSym(X_n)$. Thus let $\xi_n: H_n\rightarrow \{1, -1\}$, $h\mapsto \textrm{sgn}_{X_n}(f_h)$.
\end{defn}
Note that for any $g\in H_n$, $\xi_n(g)=\xi_n(f_g)$.
\begin{lem}\label{1.5} There exists an algorithm which, given $g\in H_n$, computes $\xi_n(g)$.
\end{lem}
\begin{proof} Since $g(\prod_{i=2}^ng_i^{t_i(g)})$ lies in $\FSym(X_n)$, we may apply Lemma \ref{lem-actioncomputable} to determine its cycle type, which is sufficient to compute $\xi_n(g)$.
\end{proof}
The previous definition exactly captures the form of the elements in $(U_p)\hat{\phi}_{U_p}$.
\begin{lem}\label{1.6} Let $n\in \{2, 3, \ldots\}$, $p\in 2\N$, $g\in H_{np}$, and $t(g)\in t((U_p)\hat{\phi}_{U_p})$. Then $g \in (U_p)\hat{\phi}_{U_p}$ if and only if $\xi_{np}(g)=1$.
\end{lem}
\begin{proof} Let $h:=(g)\hat{\phi}_{U_p}^{-1}$. Using that $t(g)\in t((U_p)\hat{\phi}_{U_p})$ and that $p\big|t_i(h)$ for every $i\in \Z_{n}$: $h\in U_p\Leftrightarrow f_h \in U_p \Leftrightarrow f_h \in \FAlt(X_{n})\Leftrightarrow f_g \in \FAlt(X_{np})\Leftrightarrow \xi_{np}(g)=1$.

\end{proof}
\begin{lem}\label{1.7} Let $n\in \{2, 3, \ldots\}$. Then $\xi_n$ is a homomorphism.
\end{lem}
\begin{proof}
Clearly the function $\textrm{sgn}_X$ is a homomorphism. Let $g, h \in H_n$. Then
\[gh=f_g(\prod_{i=2}^ng_i^{-t_i(g)})f_h(\prod_{i=2}^ng_i^{-t_i(h)})=f_gf_h'(\prod_{i=2}^ng_i^{-(t_i(g)+t_i(h))})\]
where $f_h'=(\prod_{i=2}^ng_i^{-t_i(g)})f_h(\prod_{i=2}^ng_i^{t_i(g)})$ has the same cycle type as $f_h$. Thus $f_{gh}=f_gf_h'$ and $\xi_n(gh)=\xi_n(f_{gh})=\xi_n(f_gf_h')=\xi_n(f_g)\xi_n(f_h')=\xi_n(f_g)\xi_n(f_h)=\xi_n(g)\xi_n(h)$. 
\end{proof}
\begin{proof}[Proof of Proposition \ref{prop-conjinUp}] By Lemma \ref{searchforcentraliser}, $a$ and $b$ are $(U_p)\hat{\phi}_{U_p}$-conjugated if and only if there exists a $c\in C_{H_{np}}(a)$ such that $cx\in (U_p)\hat{\phi}_{U_p}$. A necessary condition for $cx\in (U_p)\hat{\phi}_{U_p}$ is that $t(cx)\in (U_p)\hat{\phi}_{U_p}$.  Lemma \ref{1.2} decides whether this condition can be satisfied. If not, then $C_{H_{np}}(a)x\cap (U_p)\hat{\phi}_{U_p}$ is empty and $a$ and $b$ are not $(U_p)\hat{\phi}_{U_p}$-conjugated. We can therefore assume that this condition is satisfied.

Let $c \in C_{H_{np}}(a)$ be the element outputted by the algorithm of Lemma \ref{1.2} so that $t(cx)\in (U_p)\hat{\phi}_{U_p}$. By Lemma \ref{1.6}, $cx\in (U_p)\hat{\phi}_{U_p}$ if and only if $\xi_{np}(cx)=1$. By Lemma \ref{1.5}, $\xi_{np}(cx)$ is computable. If $\xi_{np}(cx)=1$ we are done and so let us assume that $\xi_{np}(cx)=-1$. If there exists $c'\in C_{H_{np}}(a)\cap (\FSym(X_{np})\setminus\FAlt(X_{np}))$ then $c'c\in C_{H_{np}}(a)$, $t(c'cx)=t(c')+t(cx)=t(cx)\in t((U_p)\hat{\phi}_{U_p})$, and $\xi_{np}(c'cx)=\xi_{np}(c')\xi_{np}(cx)=1$ since $\xi_{np}$ is a homomorphism by Lemma \ref{1.7}. Hence $c'cx$ conjugates $a$ to $b$ and $c'cx \in(U_p)\hat{\phi}_{U_p}$. Lemma \ref{1.3} decides whether such a $c'$ exists. If no such $c'$ exists, Lemma \ref{1.3} states that $I^c=\emptyset$. Thus the generating set $\{\Vec{\delta}_1, \ldots, \Vec{\delta}_e\}$ for $t(C_{H_{np}}(a))$ from Proposition \ref{1} consists only of those elements from Lemma \ref{7}. Lemma \ref{7} computes $\hat{\Vec{\delta}}_1, \ldots, \hat{\Vec{\delta}}_e \in C_{H_{np}}(a)$ such that $t(\hat{\Vec{\delta}}_j)=\Vec{\delta}_j$ for each $j \in \Z_e$. We then take the equations
\begin{align*}
t(x)+\sum\limits_{i=1}^e\alpha_i\Vec{\delta}_i=(a_1\ldots a_1\;a_2\ldots a_2 \ldots\ldots a_{n} \ldots a_{n})^T,\;\text{where}\;a_n:=-\sum\limits_{i=1}^{n-1}a_i.
\end{align*}
labelled (\ref{eqntobesolved2}) above (where $a_1, \ldots, a_{n-1}$ and $\alpha_1, \ldots, a_e$ are to be found) and add the equation
\begin{align*}
 \left(\prod\limits_{j=1}^e\xi(\hat{\Vec{\delta}}_j)^{\alpha_j}\right)\xi(x)=1
\end{align*}
which ensures that the chosen $c\in C_{H_{np}}(a)$ satisfies $\xi(cx)=1$ (from Lemma \ref{1.7}). From our assumptions, the choice of generating set and preimage are arbitrary: if $h, h' \in C_{H_{np}}(a)$ satisfy $t(h)=t(h')$, then we must have that $h^{-1}h' \in \FAlt(X_{np})$, since otherwise $C_{H_{np}}(a)\cap \big(\FSym(X_{np})\setminus \FAlt(X_{np})\big)$ would be non-empty. We may then write these equations as a matrix equation and compute the Smith normal form to decide whether or not the equations have an integer solution.
\end{proof}
\subsection{Proving Theorem 3} The structure of centralisers for elements in $H_n$ was studied in \cite{simon}. In this section we show, for any $g\in\G$, when $C_{H_n}(g)$ is finitely generated and when it is, that a finite generating set is algorithmically computable. From Section \ref{6.1}, a finite set $\{\Vec{\delta}_1, \ldots, \Vec{\delta}_e\}$ is computable, from only $n\in \{2,3,\ldots\}$ and $a\in \G$, such that $t(C_{H_n}(a))=\langle \Vec{\delta}_1, \ldots, \Vec{\delta}_e\rangle$. Hence, given $c \in C_{H_n}(a)$, there exist $\alpha_1, \ldots, \alpha_e \in \Z$ such that $t(c)=\sum_{i=1}^e\alpha_i\Vec{\delta}_i$. 

\begin{not*} Let $\{\Vec{\delta}_1, \ldots, \Vec{\delta}_{e'}\}$ be the image of $\Theta(a)$ under $t$ (where the set $\Theta(a)$ was introduced in Definition \ref{settheta}) and let $\{\Vec{\delta}_{e'+1}, \ldots, \Vec{\delta}_e\}$ be the image under $t$ of the elements outputted by the algorithm of Lemma \ref{7}.
\end{not*}
We will begin to describe a finite generating set for $C_{H_n}(a)$ (should one exist) by choosing certain preimages (under $t$) of $\{\Vec{\delta}_1, \ldots, \Vec{\delta}_e\}$. Lemma \ref{introducecjj} states that $\Theta(a)\subseteq C_{H_n}(a)$, and so our preimages of $\{\Vec{\delta}_1, \ldots, \Vec{\delta}_{e'}\}$ will be the elements of $\Theta(a)$. The following lemma shows that, by imposing the condition that the preimage may only have infinite orbits, there is a unique preimage for the elements $\{\Vec{\delta}_{e'+1}, \ldots, \Vec{\delta}_e\}$.

\begin{lem} Let $n\in \{2, 3,\ldots\}$, $a\in \G$, $i\in I$, and $g\in C_{H_n}(a)$ such that $\langle t(g)\rangle=t_{\mathfrak{C}_a([i])}(C_{H_n}(a))$. If $h\in C_{H_n}(a)$ and $t(h)=t(g)$, then $g_{[i]}=h_{[i]}$. In addition, for each $s\in\{e'+1,\ldots, e\}$ there is a unique preimage $\delta_s$ of $\Vec{\delta}_s$ having only infinite orbits.
\end{lem}
\begin{proof} Lemma \ref{makedefnwork} states that $\Supp(g_{[i]})=\Supp((a^{|\sigma_a|})_{[i]})=\Supp(h_{[i]})$ and so $\Supp(g_{[i]}h_{[i]}^{-1})\subseteq \Supp((a^{|\sigma_a|})_{[i]})$. Now, $g_{[i]}h_{[i]}^{-1}\in\FSym(X_n)$ must be trivial by the first part of the statement of Lemma \ref{makedefnwork}: elements of $C_{H_n}(a)$ cannot have finite orbits intersecting $\Supp((a^{|\sigma_a|})_{[i]})$.
\end{proof}
The previous lemma therefore provides a unique preimage for each of the elements $\{\Vec{\delta}_{e'+1}, \ldots, \Vec{\delta}_e\}$. We now describe the structure of $C_{\FSym(X_n)}(a)$ in order to determine when $C_{H_n}(a)$  is not finitely generated, and to describe a finite generating set when one exists.

\begin{not*} Let $g\in \Sym(X)$ for some non-empty set $X$. Then $Y_g^{(0)}:=\Supp(g\big|_\infty)$, $Y_g^{(1)}:=X\setminus \Supp(g)$, and, for each $r\in \N$, $Y_g^{(r)}:=\Supp(g\big|_r)$. Thus: $Y_g^{(1)}$ denotes the fixed points of $g$; $Y_g^{(r)}$, where $r\in\{2, 3, \ldots\}$, is the union of the orbits of $g$ of size $r$; and $Y_g^{(0)}$ is the union of the infinite orbits of $g$.
\end{not*}

\begin{lem}\label{centralisereltsplitting} Let $g\in \Sym(X)$ and $G\le \Sym(X)$ for some non-empty set $X$. If $c\in C_G(g)$, then $c=\prod_{r=0}^\infty\alpha_r$ where, for every $r\in \{0,1,2,\ldots\}$, $\Supp(\alpha_r)\subseteq Y_g^{(r)}$.
\end{lem}
\begin{proof} We show that $c$ must restrict to a bijection, for each $r\in \{0, 1, 2,\ldots\}$, on the set $Y_g^{(r)}$. Consider if $c$ sent an $r$-cycle of $g$ to an $s$-cycle of $g$ where $r\ne s$. By possibly replacing $c$ with $c^{-1}$, we may assume that $r<s$ (where $s$ may be finite or infinite). Let $y$ be in this orbit of size $r$. Then $(y)g^r=y$. Since $c\in C_G(g)$, $(y)c^{-1}g^rc=y$. But then $c$ sends both $(y)c^{-1}$ and $(y)c^{-1}g^r$ to $y$, a contradiction.
\end{proof}

\begin{lem}\label{thm3mainstructureresult}  Let $g\in \Sym(X)$ for some non-empty set $X$. Then
\[C_{\FSym(X)}(g)\cong \bigoplus\limits_{r=0}^\infty C_{\FSym(Y_g^{(r)})}(g)\]
\end{lem}
\begin{proof} Given $c\in C_{\FSym(X)}(g)$, the previous lemma states that $c=\prod_{r=0}^\infty\alpha_r$ where, for every $r\in \{0,1,2,\ldots\}$, $\Supp(\alpha_r)\subseteq Y_g^{(r)}$. Now $c\in \FSym\Rightarrow |\Supp(c)|<\infty$, and so, for every $r$, $|\Supp(\alpha_r)|<\infty \Rightarrow \alpha_r\in\FSym(Y_g^{(r)})$. Clearly, if $r\ne s$, then $\FSym(Y_g^{(r)})\cap \FSym(Y_g^{(s)})$ is trivial.
\end{proof}

\begin{lem}\label{onepointdefines} Let $g\in \Sym(X)$, where $X$ is a non-empty set, and let $y, z\in X$. If $c \in C_{\Sym(X)}(g)$ and $c:y\mapsto z$, then $c:yg^d\mapsto zg^d$ for all $d\in \Z$.
\end{lem}
\begin{proof} For all $d\in \Z$,
\begin{align*}
(z)c^{-1}g^dc=yg^dc\;\text{and}\;(z)c^{-1}g^dc=(z)g^d.
\end{align*}
Thus $c:yg^d\mapsto zg^d$ for all $d\in \Z$.
\end{proof}

\begin{lem}\label{fsymcentraliserstructure} Let $r\in \N$ and $g\in \Sym(X)$ where $X$ is a non-empty set. Then
\[C_{\FSym(Y_g^{(r)})}(g)\cong C_r\wr\FSym\left(\quotient{Y_g^{(r)}}{g}\right)\]
where $\quotient{Y_g^{(r)}}{g}$ is the set obtained from $Y_g^{(r)}$ by the equivalence relation $x\sim y \Leftrightarrow xg^d=y$ for some $d\in \Z$. Moreover, as a subgroup of $\FSym(Y_g^{(r)})$, the base of this wreath product consists of all of the $r$-cycles of $g$, and the head consists of all finitary permutations of the orbits of $g$ of size $r$.
\end{lem}

\begin{proof}
Let $c\in C_{\FSym(Y_g^{(r)})}(g)$. From Lemma \ref{thm3mainstructureresult} and Lemma \ref{onepointdefines}, $c$ restricts to a bijection of the $r$-cycles of $g$. We will impose the following condition on $c$: whenever $c$ sends an $r$-cycle $\sigma$ of $g$ to an $r$-cycle $\omega$ of $g$, it sends the smallest point of $\Supp(\sigma)$ (under some total ordering on $X$) to the smallest point of $\Supp(\omega)$. We may do this by replacing $c$ with $hc$ where $h$ is a product of $r$-cycles of $g$, since each $r$-cycle of $g$ clearly lies in $C_{\FSym(Y_g^{(r)})}(g)$. Then $c$ is determined by its action on the points in $\quotient{Y_g^{(r)}}{g}$, where the representatives can be chosen to be the smallest point in each $g$-orbit of size $r$. Thus every element of $C_{\FSym(Y_g^{(r)})}(g)$ is a product of $r$-cycles of $g$ together with a permutation of the $r$-cycles of $g$ i.e. $C_{\FSym(Y_g^{(r)})}(g)$ is the above wreath product.
\end{proof}

We now work through the cases $|I_r^c(a)|=0$, $|I_r^c(a)|=r$, and $|I_r^c(a)|\ge 2r$.

\begin{defn}\label{onlyfinite} Let $n\in \{2,3,\ldots\}$ and $g\in \G$. For each $r\in \N$ where $Y_g^{(r)}$ is finite, let $\Omega_r(g)$ consist of the elements of $C_{\FSym(Y_g^{(r)})}(g)$. This set is enumerable since, from our description of orbits in Section \ref{sectionorbits}, all such $r$-cycles of $g$ lie in $Z(g)$ which is finite and computable from only the element $g$.
\end{defn}

\begin{not*} Given $n\in \N$ and $Y\subseteq X_n$, let $H_n(Y):=\{h\in H_n\mid \Supp(h)\subseteq Y\}$.
\end{not*}

\begin{lem}\label{7.4} Let $r\in \N$ and $|I_r^c(g)|= r$. Then $C_{H_n}(g)$ is not finitely generated.
\end{lem}
\begin{proof}
By Lemma \ref{4}, if $c\in C_{H_n}(g)$ then $t_j(c)=0$ for all $j\in I_r^c(g)$. By Lemma \ref{centralisereltsplitting}, if $c\in C_{H_n}(g)$ then $c=fc'$ where $\Supp(f)\subseteq Y_g^{(r)}$ and $\Supp(c')\subseteq X_n\setminus Y_g^{(r)}$. Thus
\[C_{H_n}(g)=C_{\FSym(Y_g^{(r)})}(g)\oplus  C_{H_n(X_n\setminus Y_g^{(r)})}(g).\]

Lemma \ref{fsymcentraliserstructure} states that
\begin{align}\label{eqn-fsymcentraliserstructure}
C_{\FSym(Y_g^{(r)})}(g)\cong C_r\wr\FSym\left(\quotient{Y_g^{(r)}}{g}\right).
\end{align}
If (\ref{eqn-fsymcentraliserstructure}) were finitely generated then $\FSym$ would also be finitely generated. Hence $C_{H_n}(g)$ is not finitely generated since it has a non-finitely generated quotient.
\end{proof}
\begin{lem}\label{7.5} Fix an $r\in \N$ such that $|I_r^c(g)|= 2r$ and fix a $j\in I_r^c(g)$. Then $C_{\FSym(Y_g^{(r)})}(g)$ is a subgroup of the group generated by:
\begin{enumerate}[i)]
\item an element $h\in \Theta(g)$ with $t_j(h)\ne0$;
\item an $r$-cycle $\lambda_r:=((j, z_j(g))\;(j, z_j(g))g\ldots(j, z_j(g))g^{r-1})$; and
\item $\mu_r:=\prod_{s=1}^r((j, z_j(g))g^{s-1}\;(j, z_j(g))g^{s-1})$.
\end{enumerate}
\end{lem}
\begin{proof}
We show that the elements of $C_{\FSym(Y_g^{(r)})}(g)\cong C_r\wr\FSym\left(\quotient{Y_g^{(r)}}{g}\right)$ lie in $\langle h, \lambda_r, \mu_r\rangle$. Note that, by construction, $\Supp(h)=Y_g^{(r)}$. Thus, given any $r$-cycle $\sigma$ of $g$, there exists $d\in \Z$ such that $h^{-d}\lambda_rd^d=\sigma$ and so the base of our wreath product lies in $\langle h, \lambda_r, \mu_r\rangle$. In the space $\quotient{Y_g^{(r)}}{g}$, $h$ consists of a single infinite orbit and $\mu_r$ a single 2-cycle. It follows that $\langle h, \mu_r\rangle\cong H_2$, and so, in $\quotient{Y_g^{(r)}}{g}$, we have that $\FSym\left(\quotient{Y_g^{(r)}}{g}\right)\le \langle h, \mu_r\rangle$. Thus all finitary permutations of the orbits of $g$ of size $r$ lie in $\langle h, \mu_r\rangle$, and so $C_{\FSym(Y_g^{(r)})}(g)\le \langle h, \lambda_r, \mu_r\rangle$.
\end{proof}

\begin{lem}\label{7.6} Fix an $r\in \N$ such that $|I_r^c(g)|=sr$ where $s\ge 3$. Fix representatives $j_r^1,\ldots, j_r^s$ of $I_r^c(g)$. Then $C_{\FSym(Y_g^{(r)})}(g)$ is a subgroup of the group generated by:
\begin{enumerate}[i)]
\item elements $h_2,\ldots, h_s\in \Theta(g)$ with $t_{j_r^1}(h_i)=1$ for all $2\le i\le s$ and $t_{j_r^k}(h_k)=-1$ for each $2\le k\le s$; and
\item an $r$-cycle $\lambda_r:=((j_r^1, z_{j_r^1}(g))\;(j_r^1, z_{j_r^1}(g))g\ldots(j_r^1, z_{j_r^1}(g))g^{r-1})$.
\end{enumerate}
\end{lem}
\begin{proof}
We again show that the elements of $C_{\FSym(Y_g^{(r)})}(g)\cong C_r\wr\FSym\left(\quotient{Y_g^{(r)}}{g}\right)$ lie in $\langle h_2,\ldots, h_s, \lambda_r\rangle$. Note that
\begin{align*}
\bigcup_{i=2}^s\Supp(h_i)=Y_g^{(r)}\;\text{and that}\;\bigcap_{i=2}^s\Supp(h_i)\supseteq\{(j,m)\mid j\in [j_r^1], m\ge z_j(g)\}.
\end{align*}
Thus, given any $r$-cycle $\sigma$ of $g$, there is an $i\in \{2,\ldots,s\}$ and $d\in \Z$ such that $h_i^{-d}\lambda_rh_i^d=\sigma$, and so the base of our wreath product lies in $\langle h_2,\ldots, h_s, \lambda_r \rangle$. Now, in the space $\quotient{Y_g^{(r)}}{g}$, each $h_i$ consists of a single infinite orbit and there is a natural map $\langle h_2, \ldots h_s\rangle \twoheadrightarrow H_s$, $h_i\mapsto g_i$ where $\{g_i\mid i=2,\ldots, s\}$ denotes our standard generating set of $H_s$. To be more specific, this epimorphism is induced by the surjection of sets $X_n\rightarrow X_s$, where: for each $d\in \{2,\ldots, s\}$ and each $e\in \N$, $\{(k, z_{k}(g)+e-1)\mid k\in [j_r^d]\}\mapsto (d,e)$; for each $1\le l\le f$, $\Supp(\sigma_l)\mapsto (1,l)$ where $\sigma_1,\ldots, \sigma_f$ are the ordered $r$-cycles within $Z(g)$; and, for each $e\in \N$, $\{(k, z_k(g)+e-1)\mid k\in [j_r^1]\}\mapsto (1, f+e)$.

Thus, in $\quotient{Y_g^{(r)}}{g}$, we have that $\FSym\left(\quotient{Y_g^{(r)}}{g}\right)\le \langle h_2,\ldots, h_s\rangle$ and so all finitary permutations of the orbits of $g$ of size $r$ lie in $\langle h_2,\ldots, h_s\rangle$.\end{proof}
Note that the elements appearing in Lemma \ref{7.5} and Lemma \ref{7.6} lie in $C_{H_n}(g)$. This is because the element $\lambda_r$, of type (ii), is an orbit of $g$ of size $r$, and the elements of type (i) and (iii) have support within $Y_g^{(r)}$ and induce a permutation of the orbits of $g$ of size $r$.

\begin{defn} Let $F_a$ consist of:
\begin{enumerate}[i)]
\item the preimages of the elements $\{\Vec{\delta}_1, \ldots, \Vec{\delta}_e\}$ defined above;
\item the set $\Omega_r(a)$ (of Definition \ref{onlyfinite}) for those $r\in \N$ such that $|I_r^c(a)|=0$;
\item the elements $\lambda_r$ and $\mu_r$ of Lemma \ref{7.5} for those $r\in \N$ such that $|I_r^c(a)|=2r$;
\item the element $\lambda_r$ of Lemma \ref{7.6} for those $r\in \N$ such that $|I_r^c(a)|\ge 3r$.
\end{enumerate}
\end{defn}

\begin{proof}[Proof of Theorem 3]
By Lemma \ref{lem-actioncomputable} we may compute the set $\{g_r\ne \id\mid r \in \N\}$ and, for each $r\in \N$, the sets $I_r^c(a)$. If $|I_r^c(a)|=r$ for any $r\in \N$, then our algorithm may conclude, by Lemma \ref{7.4}, that $C_{H_n}(a)$ is not finitely generated. We now show that if this is not the case, then the set $F_a$ defined above is sufficient to generate $C_{H_n}(a)$. Note that $F_a$ is finite since $\{g_r\ne \id\mid r \in \N\}$ is finite. Also $F_a$ is computable.

Let $c\in C_{H_n}(a)$. There exist numbers $\alpha_1,\ldots,\alpha_e\in \Z$ such that $t(c)=\sum_{i=1}^e\alpha_i\Vec{\delta}_i$. Thus, by using the preimages of the elements $\{\Vec{\delta}_1, \ldots, \Vec{\delta}_e\}$ in $F_a$, we can reduce to the case where $c\in C_{\FSym(X_n)}(a)$. By Lemma \ref{thm3mainstructureresult} it is sufficient to show, for each $r\in \N$, that $C_{\FSym(Y_a^{(r)})}(a)\le \langle F_a\rangle$. For each $r\in \N$, one of the following three cases occurs: $|I_r^c(a)|=0$; $|I_r^c(a)|=2r$; or $|I_r^c(a)|\ge 3r$. In the first case, the elements of $\Omega_r(a)$ are sufficient to generate $C_{\FSym(Y_a^{(r)})}(a)$. The second and third cases were dealt with by Lemma \ref{7.5} and Lemma \ref{7.6} respectively, since $F_a$ contains the finite generating sets described in each. 
\end{proof}
\newpage
\bibliographystyle{amsalpha}
\providecommand{\bysame}{\leavevmode\hbox to3em{\hrulefill}\thinspace}
\providecommand{\MR}{\relax\ifhmode\unskip\space\fi MR }
\providecommand{\MRhref}[2]{%
  \href{http://www.ams.org/mathscinet-getitem?mr=#1}{#2}
}
\providecommand{\href}[2]{#2}

\end{document}